\title{\vspace{-0.6cm} A problem of Erd\H{o}s on the minimum number of $k$-cliques}
\author{
Shagnik Das\thanks{Department of Mathematics, UCLA, Los
Angeles, CA 90095. Email: {\tt shagnik@ucla.edu}.}
\and
Hao Huang\thanks{Department of Mathematics, UCLA, Los
Angeles, CA 90095. Email: {\tt huanghao@math.ucla.edu}. Research supported by a
UC Dissertation Year Fellowship.}
\and
Jie Ma\thanks{Department of Mathematics, UCLA, Los
Angeles, CA 90095. Email: {\tt jiema@math.ucla.edu}.}
\and
Humberto Naves\thanks{Department of Mathematics, UCLA, Los
Angeles, CA 90095. Email: {\tt hnaves@math.ucla.edu}.}
\and
Benny Sudakov\thanks{Department of Mathematics, UCLA, Los Angeles, CA 90095. Email:
{\tt bsudakov@math.ucla.edu}. Research supported in part by NSF grant DMS-1101185,
NSF CAREER award DMS-0812005, and by a USA-Israeli BSF grant.}}
\date{}
\newtheorem{thm}{Theorem}[section]
\newtheorem{prop}[thm]{Proposition}
\newtheorem{lemma}[thm]{Lemma}
\newtheorem{cor}[thm]{Corollary}
\newtheorem{fact}[thm]{Fact}
\def\qed{\ifvmode\mbox{ }\else\unskip\fi\hskip 1em plus 10fill$\Box$}
\begin{document}
\maketitle

\begin{abstract}
Fifty years ago Erd\H{o}s asked to determine the minimum number of $k$-cliques in a graph on $n$ vertices with independence number less than $l$. He conjectured that this minimum is achieved by the disjoint union of $l-1$ complete graphs of size $\frac{n}{l-1}$. This conjecture was disproved by Nikiforov who showed that the balanced blow-up of a $5$-cycle has fewer $4$-cliques than the union of $2$ complete graphs of size $\frac{n}{2}$.

In this paper we solve Erd\H{o}s' problem for $(k,l)=(3,4)$ and $(k,l)=(4,3)$. Using stability arguments we also characterize the precise structure of extremal examples, confirming Erd\H{o}s' conjecture for $(k,l)=(3,4)$ and showing that a blow-up of a $5$-cycle gives the minimum for $(k,l)=(4,3)$.
\end{abstract}

\section{Introduction} \label{intro}

Let $K_l$ denote a complete graph on $l$ vertices and let $\overline{K_l}$ be its complement, i.e., an independent set of size $l$. One of the central results in extremal combinatorics is Tur\'{a}n's theorem \cite{turan}, which asserts that the maximum number of edges in a $K_l$-free graph on $n$ vertices is attained by the Tur\'{a}n graph $T_{n,l-1}$, a complete $(l-1)$-partite graph with nearly-equal parts.  This theorem has since been extended and generalized in many different ways.  Since an edge can be thought of as a clique on $2$ vertices, a natural generalization is to ask for the maximum number of $K_k$ in an $n$-vertex graph with no $K_l$.  Zykov \cite{zykov} showed that this maximum was also attained by the Tur\'{a}n graph $T_{n,l-1}$.

For any integers $k, l \ge 2$ and $n$, we define $f(n,k,l)$ to be the minimum number of copies of $K_k$ in a $\overline{K_l}$-free graph on $n$ vertices.  If one takes the complements of the graphs in Tur\'{a}n's theorem, then the theorem gives the minimum number of edges in an $n$-vertex $\overline{K_l}$-free graph.  Thus the question of determining $f(n,k,l)$ is precisely the Zykov-type generalization of this complementary version.  Fifty years ago Erd\H{o}s \cite{erdos} asked to determine $f(n,k,l)$ and conjectured that the minimum is given by the complement of the Tur\'{a}n graph, $\overline{T_{n,l-1}}$, which is the disjoint union of $l-1$ complete graphs of equal size.  When $k=2$, this follows from Tur\'{a}n's theorem.

Note that a graph is $\overline{K_l}$-free precisely when its independence number is less than $l$.  One can thus also view this problem as a strengthening of Ramsey's theorem, which states that any sufficiently large graph either has a clique of size $k$ or an independent set of size $l$.  The $(k,l)$-problem asks how many cliques of size $k$ a graph must have when its independence number is less than $l$.

\medskip

Lorden \cite{lorden} proved Erd\H{o}s' conjecture to be true for the $(3,3)$-case by a simple double-counting argument.  However, no further progress was made in the next forty years, until Nikiforov \cite{nikiforov} disproved the conjecture in the case $(4,3)$ by showing the balanced blow-up of $C_5$, which is $\overline{K_3}$-free, contains fewer $4$-cliques than the disjoint union of two cliques, $\overline{T_{n,2}}$.  In a \emph{blow-up} of a graph, we replace every vertex with a clique, and every edge with a complete bipartite graph.  We say the blow-up is \emph{balanced} if the cliques are all of the same size.  In a subsequent preprint \cite{nikiforov_pre}, Nikiforov showed that his construction is optimal under the additional assumption that the graph should be nearly-regular.

Moreover, by considering blow-ups of Ramsey graphs, Nikiforov showed that the conjecture could only hold for finitely many $(k,l)$ when $k,l \ge 3$.  In particular, he conjectured that equality held only for the cases $(3,3)$ and $(3,4)$, the latter of which remained an open problem.

\subsection{Our results}

In this paper, we first sharpen Nikiforov's result by showing that Erd\H{o}s' conjecture is always false when $k \ge 4$ and $l \ge 3$, or when $k = 3$ and $l \ge 2074$.  We obtain these results through a combination of explicit and random counterexamples.

We then solve the problem in the cases $(k,l) = (4,3)$ and $(3,4)$.
Using the machinery of flag algebras developed by Razborov
\cite{raz_flag}, we are able to obtain the asymptotic values of
$f(n,4,3)$ and $f(n,3,4)$.  By analyzing the corresponding
semi-definite programming solutions, we are then able to derive
stability results for these cases, which in turn allow us to
determine $f(n,4,3)$ and $f(n,3,4)$ exactly for large $n$, and also
to characterize the extremal graphs. In particular, we show that a
blow-up of $C_5$ is indeed optimal for the $(4,3)$ problem, while
Erd\H{o}s' conjecture holds for the $(3,4)$ problem.  Our results
are summarized in the following theorems.

\begin{thm} \label{thm43}
$f(n,4,3) = \frac{3}{25} \binom{n}{4} + O(n^3)$, where the minimum is achieved by a blow-up of $C_5$ with five parts of roughly equal sizes.  Moreover, the extremal structure is unique for sufficiently large $n$.
\end{thm}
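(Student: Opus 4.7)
The statement contains three assertions: (a) the asymptotic lower bound $f(n,4,3)\ge \tfrac{3}{25}\binom{n}{4}+O(n^3)$, (b) a matching construction, and (c) uniqueness of the extremal structure for large $n$. The construction is immediate: in a balanced blow-up $G$ of $C_5$ with classes $V_1,\dots,V_5$ of size $m=n/5$, any $K_4$ lies inside two consecutive classes since $C_5$ is triangle-free, and a direct count yields $5\binom{2m}{4}-5\binom{m}{4}=\tfrac{3}{25}\binom{n}{4}+O(n^3)$, while the fact that no three classes of $C_5$ are independent guarantees $\overline{K_3}$-freeness. Thus the content of the theorem lies in the matching lower bound and the uniqueness.

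My plan for the lower bound is to invoke Razborov's flag-algebra machinery in the theory of $\overline{K_3}$-free graphs, minimizing the $K_4$-density. One writes down a semidefinite program whose feasibility expresses $d(K_4)-\tfrac{3}{25}$ as an averaged sum of squared flag polynomials over flags of a small type $\sigma$ (typically with two or three labeled vertices), modulo the zero-density identity for $\overline{K_3}$. After solving the SDP numerically on all flags up to size $7$ or $8$, I would round the approximate positive-semidefinite solution to a certifiable rational matrix $Q\succeq 0$; substituting $Q$ back into the flag-algebra identity yields $d(K_4)\ge 3/25$ for any admissible graphon limit, and hence $f(n,4,3)\ge \tfrac{3}{25}\binom{n}{4}+O(n^3)$.

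Stability then emerges from analyzing the kernel of the certificate. Every extremal graphon must annihilate the squared flag terms, pinning down a finite list of small subgraph densities to exactly the values they take in the $C_5$ blow-up; in particular, certain explicit subgraphs are forced to have density zero. A removal/regularity-style argument then upgrades this into the statement that any graph with $(\tfrac{3}{25}+o(1))\binom{n}{4}$ copies of $K_4$ differs from some balanced $C_5$ blow-up in at most $o(n^2)$ edges. I expect the main obstacle to be verifying that the flag data produced by the SDP genuinely pins down the $C_5$ blow-up uniquely among $\overline{K_3}$-free limits, rather than a larger family of potential extremizers.

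Armed with this stability, the exact count and uniqueness should follow by a vertex-by-vertex correction argument. Given a near-extremal $G$, fix a partition $V_1,\dots,V_5$ minimizing the edit distance to a balanced blow-up of $C_5$. For each vertex $v$ I would compare its actual contribution to the $K_4$-count with what it would contribute if its neighborhood respected the partition; using the approximate blow-up structure on $V(G)\setminus\{v\}$, any extra edge between non-consecutive classes, any missing edge between consecutive ones, or any large imbalance in class sizes can be shown to cost $\Omega(n^3)$ additional copies of $K_4$ while still preserving $\overline{K_3}$-freeness. Iterating the correction then forces $G$ to be exactly a balanced blow-up of $C_5$, and a final calculation over integer class sizes identifies the unique extremal graph. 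The delicate point is checking that each local correction strictly decreases the count without creating an independent triple, which relies crucially on the structure produced by the stability step.
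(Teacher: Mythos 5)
Your roadmap is essentially the paper's proof: a flag-algebra sum-of-squares certificate (the paper in fact needs only types of size $1$ and $3$ with flags of size at most $4$, expanded into $5$-vertex graphs, rather than flags of size $7$ or $8$) gives $d(K_4)\ge \tfrac{3}{25}$; the vanishing of the squared terms for any asymptotically optimal sequence yields exactly the structural data (density $o(1)$ of induced $C_4$ and of the graph $G_5$, almost all degrees $(\tfrac35+o(1))n$, with a second admissible solution of the flag-density equations ruled out because it forces at least $(\tfrac18+o(1))\binom{n}{4}$ copies of $K_4$), from which an approximate $C_5$-blow-up partition is built around a typical induced path of length two, and local correction arguments using extremality plus an integer optimization over the five class sizes give the exact value and the unique extremal graph. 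The obstacle you single out --- that the kernel of the certificate pins down the $C_5$ blow-up among $\overline{K_3}$-free limits --- is precisely the step the paper resolves by solving the resulting system of flag-density equations at almost every vertex and discarding the spurious solution, so your plan and the paper's argument coincide in all essentials.
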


\noindent We determine the exact sizes of the parts of the blow-up by solving an integer optimization problem, the precise results of which are given in Section \ref{sec43}.

\begin{thm} \label{thm34}
$f(n,3,4) = \binom{\lfloor n/3 \rfloor}{3}+\binom{\lfloor (n+1)/3 \rfloor}{3}+\binom{\lfloor (n+2)/3 \rfloor}{3} \sim \frac{1}{9} \binom{n}{3}$, where for large $n$ the minimum is achieved by three disjoint cliques that are as equal as possible.  Moreover, any extremal graph must be spanned by three such cliques.
\end{thm}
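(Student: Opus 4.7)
The plan is to follow the three-step framework indicated in the introduction: an asymptotic lower bound via Razborov's flag algebra method, a stability result extracted from the corresponding SDP certificate, and a local-modification argument that upgrades stability to the exact statement and identifies the extremal structure. The upper bound is straightforward: the graph $G^\ast$ formed by three vertex-disjoint cliques of sizes $\lfloor n/3\rfloor$, $\lfloor (n+1)/3\rfloor$, $\lfloor (n+2)/3\rfloor$ has independence number exactly $3$ and realizes the claimed triangle count, and a strict-convexity argument for $\binom{x}{3}$ shows that among integer partitions $a_1+a_2+a_3=n$ the value $\binom{a_1}{3}+\binom{a_2}{3}+\binom{a_3}{3}$ is uniquely minimized at the nearly-balanced partition.

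For the matching asymptotic bound $f(n,3,4)\ge \bigl(\tfrac{1}{9}-o(1)\bigr)\binom{n}{3}$, I would set up the flag algebra SDP for graphs forbidding $\overline{K_4}$, expressing the asymptotic $K_3$-density as a non-negative combination of labeled subgraph densities over a suitably rich family of types. The construction $G^\ast$ caps the SDP value at $\tfrac{1}{9}$, so producing a feasible positive semidefinite dual certificate matching this value yields the asymptotic lower bound. The same certificate feeds the stability step: any near-optimal graph must have vanishingly small densities of the \emph{zero-flags} that are absent in $G^\ast$ --- for instance, triples containing exactly one edge, or triples spread across three distinct parts. Tracking these near-vanishing densities through a cleaning argument produces a quantitative stability statement: there exists $\delta(\varepsilon)\to 0$ such that any $\overline{K_4}$-free $G$ with at most $\bigl(\tfrac{1}{9}+\varepsilon\bigr)\binom{n}{3}$ triangles admits a partition $V(G)=V_1\cup V_2\cup V_3$ with $\bigl||V_i|-n/3\bigr|=o(n)$, within edit distance $\delta(\varepsilon)n^2$ of being a disjoint union of three cliques.

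To upgrade stability to the exact conclusion I would first prune a bounded set of \emph{bad} vertices (those with too many non-neighbors inside their part or too many neighbors outside) and re-assign each such vertex to the part maximizing its internal degree. For the remaining good structure, suppose some $V_i$ contains a non-edge $uu'$: because $u$ and $u'$ each have only $o(n)$ cross-neighbors, one can choose good vertices $v\in V_j$ and $w\in V_k$ non-adjacent to both, and then $\{u,u',v,w\}$ is an independent $4$-set, a contradiction. Hence each $V_i$ is a clique, so $G$ is spanned by the three cliques $V_1,V_2,V_3$; these alone already account for $\sum_i\binom{|V_i|}{3}$ triangles, and the convexity argument of the first paragraph then pins down the unique (up to permutation) size profile and recovers the exact formula. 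I expect the stability step to be the main obstacle, since translating the flag algebra certificate into a clean quantitative edit-distance bound requires isolating the right family of zero-flags and running the cleaning argument tightly enough to drive the subsequent local-modification step; once stability is in hand, the exact argument is local and essentially routine.
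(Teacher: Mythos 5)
Your overall architecture (flag-algebra lower bound, stability, local upgrade, convexity for the balanced sizes) matches the paper's, but there is a concrete gap in the step that upgrades stability to the exact statement: the treatment of the exceptional (``bad'') vertices. After your cleaning step you ``re-assign each such vertex to the part maximizing its internal degree,'' but this does not make those vertices behave like good ones: a bad vertex may still have $\Omega(n)$ neighbors outside its part or $\Omega(n)$ non-neighbors in every part, and your two closing arguments silently assume the opposite. The independent-$4$-set argument for a non-edge $uu'$ inside $V_i$ needs \emph{both} endpoints to have only $o(n)$ cross-neighbors, so it only proves that the good vertices of each part form a clique; and the count $t_3(G)\ge\sum_i\binom{|V_i|}{3}$ followed by convexity only pins down the structure if you already know $G$ is spanned by three cliques, which is exactly the assertion of the theorem (``any extremal graph must be spanned by three such cliques''). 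In the paper this is where extremality is used in an essential way: one proves that in a minimizer every vertex lies in at most $\binom{|C_3|+|B|}{2}$ triangles (otherwise delete it and add a new vertex joined to $C_3\cup B$), and then a case analysis (a bad vertex complete to one clique with many outside neighbors; a bad vertex with a non-neighbor in each clique, split into two subcases via $\alpha(G)\le 3$) shows any bad vertex would lie in too many triangles. Without some such minimality-based local modification your proposal cannot exclude bad vertices, so it does not yield the exact value of $f(n,3,4)$ nor the structural conclusion.

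A secondary difference worth noting: your stability step, which you yourself flag as the main obstacle, is proposed as an edit-distance cleaning argument from vanishing ``zero-flag'' densities. The paper takes a shorter route: its SDP certificate retains the term $\bigl[\bigl[(\rho-\tfrac13)^2\bigr]\bigr]_{dot}$, so near-optimality forces almost all degrees to be $\bigl(\tfrac13+o(1)\bigr)n$; deleting the few high-degree vertices leaves a graph of maximum degree below $\tfrac38 n'$, and the Andr\'asfai--Erd\H{o}s--S\'os theorem (applied to the complement) says this graph is \emph{exactly} spanned by three cliques --- no approximate partition, no patching of internal non-edges. If you pursue your route, you would still need the extremality argument above for the bad vertices; if you adopt the degree-plus-AES route, the ``each part is a clique'' step comes for free and only the bad-vertex analysis remains.
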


\noindent Note that in this case the extremal graph is not unique, as we may have partial matchings between the cliques without introducing any extra triangles.

As we remark in our concluding section, solutions of corresponding SDP problems strongly suggest that a disjoint union of cliques remains optimal for the $(3,5)$- and $(3,6)$-problems, contrary to Nikiforov's conjecture.

\subsection{Notation and organization}

Given a graph $G$ on vertices $V(G)$, and a vertex $v \in V(G)$, we denote by $N(v)$ the set of neighbors of $v$ in $G$, and by $\overline{N(v)}$ the set of non-neighbors of $v$.  The complement graph $\overline{G}$ shares the same vertices as $G$, and has an edge $\{u, v\}$ if and only if $\{ u, v \}$ is not an edge of $G$.  We denote the independence number of $G$ by $\alpha(G)$.  The complete graph on $k$ vertices is denoted by $K_k$.  In particular, a graph $G$ is $\overline{K_l}$-free if and only if $\alpha(G) < l$.  Some other graphs we will use are the cycles $C_k$, and paths $P_k$ where in each case the subscript refers to the number of edges.

Given a fixed graph $H$, for any graph $G$ we let $t_{H}(G)$ denoted the number of induced copies of $H$ in $G$. In the case $H = K_k$, we simplify the notation to $t_k(G)$.  Using this notation, we can define
\[ f(n,k,l) = \min \{ t_k(G) : |V(G)| = n, t_l( \overline{G} ) = 0 \}. \]

\medskip

The rest of the paper is organized as follows.  In the next section, we construct counterexamples to Erd\H{o}s' conjecture in the case $k \ge 4$ and $l \ge 3$ or $k = 3$ and $l$ large.  In Section \ref{flag_intro}, we provide an informal introduction to our main tool, flag algebras.  Sections \ref{sec43} and \ref{sec34} contain the proofs of our main results for the $(4,3)$- and $(3,4)$-problems respectively.  The final section contains some concluding remarks and open problems.

Some technical details are given in the appendices: Appendix \ref{app_impflags} provides some remarks regarding implementation of flag algebras, and Appendix \ref{app_intopt} contains the proof of the integer optimization result for the $(4,3)$-problem.

\section{Counterexamples to Erd\H{o}s' conjecture} \label{nonturan}

Nikiforov \cite{nikiforov} showed that not only was Erd\H{o}s' conjecture not true in general, but that it held only finitely often.  He used bounds on the Ramsey numbers $R(3,l)$ to show the existence of $k_0$ and $l_0$ such that whenever $k > k_0$ or $l > l_0$, blow-ups of Ramsey graphs did better than disjoint unions of cliques $\overline{T_{n,l-1}}$.  In the following theorem, we use a combination of explicit and random constructions to further improve this result.

\begin{thm} \label{counterexample}
$\overline{T_{n,l-1}}$ is not optimal for the $(k,l)$-problem when
\begin{itemize}
    \item[(i)] $k \ge 4$ and $l \ge 3$, or
    \item[(ii)] $k = 3$ and $l \ge 2074$.
\end{itemize}
\end{thm}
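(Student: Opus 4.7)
The plan is to exhibit, for each of (i) and (ii), a concrete $n$-vertex graph with independence number less than $l$ but strictly fewer copies of $K_k$ than $\overline{T_{n,l-1}}$. The two parts call for rather different constructions: a deterministic ``$C_5$-blow-up plus cliques'' construction in (i), and a random-blow-up construction in (ii).

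For part (i), I would generalise Nikiforov's $(4,3)$ example in the most natural way. Set $M := n/(l-1)$ and take the disjoint union of $l-3$ cliques of size $M$ together with a balanced blow-up of $C_5$ on the remaining $2M$ vertices (so each of the five parts has size $2M/5$). Since the $l-3$ cliques each contribute $1$ and the $C_5$-blow-up contributes $\alpha(C_5)=2$, the total independence number is $(l-3)+2=l-1<l$; in the boundary case $l=3$ this just recovers Nikiforov's construction. Writing $g_k(N) := 5\binom{2N/5}{k}-5\binom{N/5}{k}$ for the number of $K_k$'s in a balanced $C_5$-blow-up of order $N$ (the first term sums $K_k$'s across the five cliques $V_i\cup V_{i+1}$ and the second corrects for the single-part double counting), the comparison with $\overline{T_{n,l-1}}$ reduces to the single inequality $g_k(2M)<2\binom{M}{k}$; asymptotically this is $4^k-2^k<2\cdot 5^{k-1}$, which holds with room to spare for $k=4$ ($240<250$) and continues to hold for every $k\ge 4$ because the ratio $(4^k-2^k)/5^{k-1}=5(4/5)^k(1-2^{-k})$ is easily checked to be decreasing in $k$. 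Integer rounding of the part sizes only contributes $O(n^{k-1})$ error, which is dominated by the $\Theta(n^k)$ gap.

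For part (ii), with $k=3$ and $l$ large, the construction of (i) no longer helps (the analogous inequality fails at $k=3$), so I would turn to a probabilistic blow-up. Let $r$ and $p\in(0,1)$ be parameters to be chosen, sample $R=G(r,p)$, and blow each vertex of $R$ up to a clique of size $m=n/r$; the resulting graph has independence number exactly $\alpha(R)$, and its triangle count splits into contributions from $1$-, $2$-, and $3$-part subsets:
\[
   r\binom{m}{3} + 2e(R)\binom{m}{2}m + t_3(R)m^3 \;\sim\; \frac{n^3}{6}\!\left(\frac{1}{r^2}+\frac{3p}{r}+p^3\right).
\]
To beat the $\sim n^3/(6(l-1)^2)$ triangles in $\overline{T_{n,l-1}}$, the dominant term $p^3/6$ must satisfy $p<(l-1)^{-2/3}$. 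At such densities, the standard union-bound estimate $\binom{r}{l}(1-p)^{\binom{l}{2}}<1$ gives $\alpha(R)<l$ with positive probability provided $r\le e^{p(l-1)/2}$, and for such $r$ the error terms $1/(6r^2)$ and $p/(2r)$ are negligible compared with $p^3/6$ once $l$ is large. Chernoff bounds for $\alpha(R)$ together with Markov (or Janson) for $t_3(R)$ then yield the existence of a suitable $R$; blowing it up to $n$ vertices and, if necessary, deleting $O(1)$ vertices to enforce the independence bound deterministically produces the required counter-example.

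The step I expect to be the main obstacle is not the asymptotic version of (ii) but the sharp numerical threshold $l\ge 2074$. The outline above shows the construction beats $\overline{T_{n,l-1}}$ for all sufficiently large $l$, yet pushing the bound down to this specific value requires simultaneously optimising $r$ and $p$ against the best available estimates for $\alpha(G(r,p))$ and bookkeeping every lower-order term in the triangle count. I anticipate that this numerical optimisation --- perhaps with $R$ replaced by a carefully chosen explicit Ramsey-type graph --- will take the bulk of the work in part (ii).
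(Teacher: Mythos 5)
Your part (i) is correct and is essentially the paper's argument: reduce to comparing two cliques of size $M=n/(l-1)$ against a balanced $C_5$-blow-up on $2M$ vertices, check $4^k-2^k<2\cdot 5^{k-1}$ at $k=4$ and note monotonicity in $k$, and observe that the remaining $l-3$ cliques and the integer rounding do not affect the $\Theta(n^k)$ gap. No issues there.

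Part (ii), however, has a genuine gap, and it is exactly the part you defer: the statement to be proved is the concrete threshold $l\ge 2074$, not ``for all sufficiently large $l$'' (the latter is already Nikiforov's result, which the theorem is sharpening). Your outline shows that a blow-up of $G(r,p)$ beats $\overline{T_{n,l-1}}$ once $l$ is large, but the entire content of (ii) is the quantitative bookkeeping you postpone to ``numerical optimisation.'' The paper carries this out explicitly: it requires, with positive probability, simultaneously $\alpha(G)<l$ and $e(G)+t_3(G)<\frac{m^3}{6(l-1)^2}-\frac{m}{6}$, bounds $\mathbb{P}(\alpha(G)\ge l)$ by the union bound, bounds the upward deviations of $e(G)$ and $t_3(G)$ by a second-moment (Chebyshev-type) inequality with explicit variances, and uses Kleitman's correlation inequality to combine the two increasing deviation events; a computer search then verifies that $l=2074$, $m=164397$, $p=0.0051707$, $s=14000$, $t=35000$ make the total failure probability less than $1$. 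Your proposed tools are too weak for this step: Markov's inequality for $t_3$ gives only constant-factor control (its failure probability is close to $1$ unless you sacrifice a constant factor in the triangle count, which destroys the tight comparison), ``Chernoff for $\alpha(R)$'' is not meaningful since $\alpha$ is not a sum of independent indicators, and you never state how the three bad-event probabilities are to be summed below $1$. Also, ``deleting $O(1)$ vertices to enforce the independence bound deterministically'' does not work: if $\alpha(R)\ge l$, removing a bounded number of vertices from the blow-up cannot repair it. Finally, even granting a suitable graph at $l=2074$, you need an argument for every $l\ge 2074$; the paper gets this by replacing $2073$ of the cliques in $\overline{T_{n,l-1}}$ with the better $2073$-independence-number graph, a step your proposal omits (re-running the randomized construction at each $l$ would require re-verifying the numerics for each $l$, which you have not done).
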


\subsection{The $(k,l)$-problem with $k \ge 4$}

Let us first consider the case $l=3$.  That is, we are looking to minimize the number of $k$-cliques in a graph with independence number at most $2$.  For the $(4,3)$-problem, Nikiforov \cite{nikiforov_pre} gave an explicit counter-example to Erd\H{o}s's conjecture by showing that a blow-up of $C_5$ contains fewer triangles than the graph $\overline{T_{n,2}}$, which consists of two disjoint cliques.  In fact, it is easy to see that this construction is better than $\overline{T_{n,2}}$ for any $k \ge 4$.  Indeed, a disjoint union of two cliques contains, asymptotically, $2 \binom{\frac{n}{2}}{k} \sim \frac{1}{2^{k-1}} \binom{n}{k}$ $k$-cliques.  On the other hand, the blow-up of $C_5$ contains $5 \left( \binom{ \frac{2n}{5} }{k} - \binom{ \frac{n}{5} }{k} \right) \sim \frac{2^k - 1}{5^{k-1}} \binom{n}{k}$ $k$-cliques.  For $k \ge 4$, we have $\frac{2^k - 1}{5^{k-1}} < \frac{1}{2^{k-1}}$, and so $\overline{T_{n,2}}$ is asymptotically not optimal for the $(k,3)$-problem.

For $l \ge 4$, the graph $\overline{T_{n,l-1}}$ consists of $l-1$ disjoint cliques.  However, as shown above, if we replace two of these cliques with a blow-up of $C_5$ on the same number of vertices, we will reduce the number of $k$-cliques. Formally, this construction has a blow-up of $C_5$ on five parts of size $\frac{2n}{5(l-1)}$, and $l-3$ disjoint cliques of size $\frac{n}{l-1}$, and contains fewer $k$-cliques than $\overline{T_{n,l-1}}$.  This shows that a disjoint union of cliques is not optimal for the $(k,l)$-problem for any $k \ge 4$ and $l \ge 3$.

\subsection{The $(3,l)$-problem}

The situation is quite different when $k=3$.  As we will show later, the disjoint union of cliques is optimal for the $(3,3)$- and $(3,4)$-problems.  However, unlike the case $k = 2$, this construction ceases to be optimal for large values of $l$. We consider the random graph $G \sim G(m,p)$ on $m$ vertices, with every edge appearing independently with probability $p$.  For suitable parameters $l, m,$ and $p$, we show that with positive probability the balanced blow-up of $G$ has no independent set of size $l$ and has fewer triangles than $\overline{T_{n,l-1}}$.  First we count the number of triangles in a balanced blow-up of an $m$-vertex graph $G$ to $n$ vertices.

There are three ways to obtain a triangle in the blow-up.  The
vertices of the triangle can all come from one part, in which case
there are $\frac{n}{m}$ vertices to choose from.  As there are $m$
vertices in $G$, there are $m \binom{ \frac{n}{m} }{3} \sim
\frac{1}{m^2} \binom{n}{3}$ such triangles.  Alternatively, the
vertices of the triangle can come from an edge in $G$, with two
vertices from one part, and the third vertex from the other.  There
are two ways to split the vertices, and $e(G)$ edges, so the total
number of such triangles is $2 e(G) \binom{ \frac{n}{m} }{2} \binom{
\frac{n}{m} }{1} \sim \frac{6 e(G)}{m^3} \binom{n}{3}$.  Finally,
the vertices of the triangle can come from a triangle in $G$, with
one vertex from each of the three parts.  There are $t_3(G)$
triangles in $G$, and so the number of such triangles is $t_3(G)
\left( \frac{n}{m} \right)^3 \sim \frac{6 t_3(G)}{m^3}
\binom{n}{3}$.  Thus the total number of triangles in the blow-up of
$G$ is asymptotically $\left( \frac{6 \left( e(G) + t_3(G) \right)
}{m^3} + \frac{1}{m^2} \right) \binom{n}{3}$.

On the other hand, $\overline{T_{n,l-1}}$ has $(l-1) \binom{
\frac{n}{l-1} }{3} \sim \frac{1}{(l-1)^2} \binom{n}{3}$ triangles.
Thus to obtain a counter-example to Erd\H{o}s's conjecture, we need
to show that for some $l,m$ and $p$, with positive probability the random graph $G
\sim G(m,p)$ has no independent set of size $l$ and $\frac{6 \left(
e(G) + t_3(G) \right) }{m^3} + \frac{1}{m^2} < \frac{1}{(l-1)^2}$,
or $e(G) + t_3(G) < \frac{m^3}{6(l-1)^2} - \frac{m}{6}$.  Let us
call such a graph `suitable'.

Let $B_1$ be the event that $\alpha(G) \ge l$, where $\alpha(G)$ is
the independence number of $G$.  For some parameters $s$ and $t$,
let $B_2$ be the event $\{ e(G) - \mathbb{E}[e(G)] \ge s \}$, and
$B_3$ the event $\{ t_3(G) - \mathbb{E}[t_3(G)] \ge t \}$.  If
$\mathbb{E}[e(G) + t_3(G)] + s + t \le \frac{m^3}{6(l-1)^2} -
\frac{m}{6}$, then $\left\{ e(G) + t_3(G) \ge \frac{m^3}{6(l-1)^2} -
\frac{m}{6} \right\} \subset B_2 \cup B_3$. Then we have
\[ \mathbb{P}( G \textrm{ not suitable}) \le \mathbb{P}( B_1 \cup B_2 \cup B_3 ) \le \mathbb{P}( B_1 ) + \mathbb{P}( B_2 \cup B_3 ). \]

We use a union bound for $B_1$: there are $\binom{m}{l}$ sets of $l$
vertices, and the probability that a given set has no edges is
$(1-p)^{\binom{l}{2}}$.  Using the bound $\binom{n}{r} \le \left(
\frac{ne}{r} \right)^r$, we have
\[ \mathbb{P}(B_1) \le \binom{m}{l} (1 - p)^{ \binom{l}{2} } \le \left( \frac{m e (1-p)^{\frac{l-1}{2}}}{l} \right)^l. \]

Note that the other two events are increasing; that is, they are preserved by the addition of edges.  It then follows from Kleitman's Lemma (see Chapter 6 in \cite{alonspencer}) that $\mathbb{P}(B_2 \cap B_3) \ge \mathbb{P}(B_2) \mathbb{P}(B_3)$, and so
\[ \mathbb{P}(B_2 \cup B_3) = \mathbb{P}(B_2) + \mathbb{P}(B_3) - \mathbb{P}(B_2 \cap B_3) \le \mathbb{P}(B_2) + \mathbb{P}(B_3) - \mathbb{P}(B_2)\mathbb{P}(B_3) = \mathbb{P}(B_2) + \mathbb{P}(B_3) \left( 1 - \mathbb{P}(B_2) \right). \]
Moreover, since the right-hand side is increasing in both $\mathbb{P}(B_2)$ and $\mathbb{P}(B_3)$, we can replace the probabilities with upper bounds to obtain an upper bound on $\mathbb{P}(B_2 \cup B_3)$.  To obtain these upper bounds, we use the following second moment concentration inequality from \cite{alonspencer}:

\begin{prop} \label{variance}
Let $X$ be a random variable with expectation $\mathbb{E}[X] = \mu$ and variance $\sigma^2$.  Then for all $\lambda > 0$,
\[ \mathbb{P}( X - \mu \ge \lambda ) \le \frac{ \sigma^2 }{ \lambda^2 + \sigma^2}. \]
\end{prop}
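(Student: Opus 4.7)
The plan is to prove this one-sided version of Chebyshev's inequality (Cantelli's inequality) by the classical shift-and-Markov trick: rather than bounding $|X-\mu|$ directly, I will introduce a free parameter $t \ge 0$ and bound a shifted second moment, then optimize over $t$.

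Concretely, fix $t \ge 0$ and let $Y = X - \mu + t$, so that $\mathbb{E}[Y] = t$ and $\mathbb{E}[Y^2] = \sigma^2 + t^2$. The event $\{X - \mu \ge \lambda\}$ equals $\{Y \ge \lambda + t\}$, and since $\lambda + t > 0$ this is contained in $\{Y^2 \ge (\lambda + t)^2\}$. Applying Markov's inequality to the nonnegative variable $Y^2$ yields
\[ \mathbb{P}(X - \mu \ge \lambda) \;\le\; \mathbb{P}\bigl(Y^2 \ge (\lambda+t)^2\bigr) \;\le\; \frac{\sigma^2 + t^2}{(\lambda + t)^2}. \]

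It remains to choose $t$ to make the right-hand side as small as possible. A routine differentiation in $t$ shows that the bound is minimized at $t^* = \sigma^2/\lambda \ge 0$, at which value the numerator becomes $\sigma^2(1 + \sigma^2/\lambda^2)$ and the denominator becomes $(\lambda^2 + \sigma^2)^2/\lambda^2$. These combine to give exactly $\sigma^2/(\lambda^2 + \sigma^2)$, completing the proof.

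There is essentially no obstacle here — every step is a short calculation. The only point that merits any care is to note that the parameter $t$ must be nonnegative in order for the inclusion $\{Y \ge \lambda + t\} \subseteq \{Y^2 \ge (\lambda+t)^2\}$ to hold (and for Markov to be applied to the correct event), and to verify that the optimizer $t^* = \sigma^2/\lambda$ indeed lies in this range. Once that is observed, the computation is forced and yields the claimed tail bound.
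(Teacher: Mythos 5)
Your proof is correct: the shift $Y = X-\mu+t$, the inclusion $\{Y \ge \lambda+t\} \subseteq \{Y^2 \ge (\lambda+t)^2\}$ (valid because $\lambda+t>0$), Markov applied to $Y^2$, and the optimization at $t^* = \sigma^2/\lambda$ do yield exactly $\sigma^2/(\lambda^2+\sigma^2)$; even the degenerate case $\sigma^2=0$ causes no trouble, since then $t^*=0$ and the bound is $0$. Note, however, that there is nothing in the paper to compare against: the authors state this proposition (Cantelli's one-sided Chebyshev inequality) as a quoted tool from Alon--Spencer and give no proof, using it only to bound the deviation probabilities for $e(G)$ and $t_3(G)$ in their random counterexample construction. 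So your argument is simply the standard textbook proof of the cited inequality, supplied where the paper chose to cite rather than prove; it is complete as written, and the one point you flag --- that $t$ must be nonnegative for the squaring step --- is indeed the only place where care is needed.
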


For the event $B_2$, with $X = e(G)$, we have $X \sim
\mathrm{Bin}\left( \binom{m}{2} , p \right)$, and so $\mu =
\binom{m}{2}p$ and $\sigma^2 = \binom{m}{2} p ( 1 - p )$.  This
gives $\mathbb{P}(B_2) \le \frac{\binom{m}{2} p ( 1 - p) }{s^2 + \binom{m}{2} p ( 1 - p) }$.

For the event $B_3$, let $X = t_3(G)$.  There are $\binom{m}{3}$
possible triangles, each of which appears with probability $p^3$,
and hence $\mu = \binom{m}{3} p^3$.  To find the variance, we note
that any fixed triangle $T$ is independent of all triangles except
those that share at least two vertices with $T$.  A quick
calculation gives $\sigma^2 = \binom{m}{3} p^3 \left[ (1 - p^3) +
3(m-3) p^2 (1 - p) \right]$.  Thus $\mathbb{P}(B_3) \le \frac{\binom{m}{3} p^3 \left[ (1 - p^3) + 3 (m-3) p^2 (1-p) \right]}{t^2 + \binom{m}{3} p^3 \left[ (1 - p^3) + 3 (m-3) p^2 (1-p) \right]}$.

Thus if we can find $l, m, p, s$ and $t$ such that $\binom{m}{2} p +
\binom{m}{3} p^3 + s + t \le \frac{m^3}{6(l-1)^2} - \frac{m}{6}$,
and
\begin{equation*}
\resizebox{1.0\hsize}{!}{$\left( \frac{m e (1 - p)^{\frac{l-1}{2}} }{l} \right)^l +\frac{\binom{m}{2} p ( 1 - p) }{s^2 + \binom{m}{2} p ( 1 - p) } + \frac{\binom{m}{3} p^3 \left[ (1 - p^3) + 3 (m-3) p^2 (1-p) \right]}{t^2 + \binom{m}{3} p^3 \left[ (1 - p^3) + 3 (m-3) p^2 (1-p) \right]} \left[ 1  -  \frac{\binom{m}{2} p ( 1 - p) }{s^2 + \binom{m}{2} p ( 1 - p) } \right] < 1$,}
\end{equation*}
then we prove that there is a suitable graph, and therefore $\overline{T_{n,l-1}}$ is not optimal for the $(3,l)$-problem.

A computer search determined that $l = 2074$, $m = 164397$, $p =
0.0051707$, $s = 14000$ and $t = 35000$ are suitable values.  Hence the graph with $2073$ disjoint cliques is not optimal
for the $(3,2074)$-problem.  Moreover, if $l > 2074$, then in
$\overline{T_{n,l-1}}$ we can replace $2073$ cliques by a graph with
fewer triangles.  Hence $\overline{T_{n,l-1}}$ is not optimal for the
$(3,l)$-problem for any $l \ge 2074$.

It would be interesting to find better constructions and to determine when
$\overline{T_{n,l-1}}$ stops being optimal for the $(3,l)$-problem.
Our flag algebra calculations suggest that it is still optimal for at least
the $(3,5)$- and $(3,6)$-problems.

\section{Flag algebra calculus}\label{flag_intro}

In this section we provide a brief introduction to the technique of
flag algebras.  First introduced by Razborov in \cite{raz_flag}, it has been applied
with great success to a wide variety of problems in extremal
combinatorics (see, for example, \cite{baber, hatami, hatami2, ravry, raz_hyper, raz_tri}).

We will begin with a general overview of the calculus, by
introducing some key definitions and providing some intuition behind
the machinery.  The second subsection will show how we express
extremal problems in the language of flag algebras.  In Appendix \ref{app_impflags} we discuss some practical considerations regarding
implementation of the method, to explain how we obtained our results in
the later sections.

\medskip

It is neither our goal to be rigorous nor thorough, but rather to
emphasize that the combinatorial arguments behind the flag algebra
calculus are as old as extremal combinatorics itself.  Indeed, the
main tools available to us are double-counting and the
Cauchy-Schwarz inequality.  To highlight this fact, we will use the
$(3,3)$-problem as a running example, and indeed, the proof
we obtain through flag algebras will be essentially the same as the
original proof Lorden gave in 1962.

The flag algebra calculus is powerful because it provides a
formalism through which the problem of finding relations between
subgraph densities can be reduced to a semi-definite programming
(SDP) problem.  This in turn enables the use of computers to find
solutions, with rigorous proofs, to problems in extremal
combinatorics.  For a more complete survey of the technique, we
refer you to the excellent expositions in \cite{keevash} and
\cite{ravry}, while for a technical specification of flag algebras,
we refer you to the original paper of Razborov \cite{raz_flag}.

\subsection{Basic definitions and notation} \label{sec31}

The flag algebra calculus is typically used to find the extremal
density of some fixed subgraph $J$ amongst graphs that avoid some
forbidden subgraph.  For our example, the $(3,3)$-problem, we wish
to minimize the density of triangles $K_3$ in graphs that do not
contain $\overline{K_3}$, the empty graph on $3$ vertices.  While
our definitions will be general, all our examples will come from
this setting.

\medskip

We say that a graph is \emph{admissible} if it contains no induced
copies of the forbidden graph. A \emph{type} $\sigma$ is an
admissible labeled graph on vertices $[k]$ for some non-negative
integer $k$ called the \emph{size} of $\sigma$, denoted by
$|\sigma|$. In what follows, an isomorphism between graphs must
preserve any labels that are present.

Given a type $\sigma$, a \emph{$\sigma$-flag} is an admissible graph
$F$ on a partially labeled vertex set, such that the subgraph
induced by the labeled vertices is isomorphic to $\sigma$. The
\emph{underlying graph} of the flag $F$ is the graph $F$ with all
labels removed. The \emph{size} of a flag is the number of vertices.
Note that when $\sigma$ is the \emph{trivial type} of size $0$
(denoted by $\sigma=0$), a $\sigma$-flag is just an usual unlabeled
admissible graph. We shall write $\mathcal{F}^\sigma_l$ for the
collection of all $\sigma$-flags of size $l$. Let
$\mathcal{F}^{\sigma}=\bigcup_{l \ge 0} \mathcal{F}^{\sigma}_l$.
When the type $\sigma$ is trivial, we shall omit the superscript
from our notation.

\medskip

Let us now define two fundamental concepts in our calculus, namely
those of flag densities in larger flags and graphs. Let $\sigma$ be
a type of size $k$, let $m\ge 1$ be an integer and let
\{$F_i\}_{i=1}^m$ be a collection of $\sigma$-flags of sizes $l_i =
|F_i| \ge k$. Given a $\sigma$-flag $F$ of order at least $l = k +
\sum_{i=1}^m (l_i-k)$, let $T \subseteq V(F)$ be the set of labeled
vertices of $F$. Now select disjoint subsets $X_i\subseteq V(F)
\setminus T$ of sizes $|X_i| = l_i-k$, uniformly at random. This is
possible because $F$ has at least $\sum_i (l_i - k)$ unlabeled
vertices. Denote by $E_i$ the event that the $\sigma$-flag induced
by $T \cup X_i$ is isomorphic to $F_i$, for $i\in [m]$. We define
$p_\sigma(F_1,F_2,\ldots,F_m;
F)\stackrel{def}{=}\mathbb{P}(\cap_{i=1}^m E_i)$ to be the
probability that all these events occur simultaneously.

If $G$ is just an admissible graph of order at least $l$, and not a
$\sigma$-flag, then there is no pre-labeled set of vertices $T$ that
induces the type $\sigma$.  Instead, we uniformly at random select a
partial labeling $L : [k] \rightarrow V(G)$. This random labeling
turns $G$ into a $\sigma'$-flag $F_L$, where the type $\sigma'$ is the labeled subgraph induced by the set of vertices $L([k])$. If $\sigma' = \sigma$, we can then
proceed as above, otherwise we say the events $E_i$ have probability
$0$. Finally, we average over all possible random labelings.
Formally, let $Y$ be the following random variable
\[
Y \stackrel{def}{=} \left\{\begin{array}{ll}
p_{\sigma}(F_1,F_2,\ldots, F_m; F_L) & \text{if } \sigma'=\sigma \\
0 & \text{otherwise}
\end{array}\right..
\]
Define $d_{\sigma}(F_1,\ldots, F_m;
G)\stackrel{def}{=}\mathbb{E}(Y)$ as the expected value of the
random variable $Y$. The quantities $p_\sigma(F_1,F_2,\ldots, F_m;
F)$ and $d_\sigma(F_1,F_2,\ldots, F_m;G)$ are called \emph{flag
densities} of $\{F_i\}_{i\in[m]}$ in $F$ and in $G$, respectively.
Clearly these flag densities are the same whenever $\sigma=0$, in
which case we omit the subscript from both notations.

\medskip

To better illustrate these definitions, we give some examples. Let
\emph{dot} be the only type of size one. Let $\rho$ and
$\overline{\rho}$ be the two \emph{dot}-flags of size two, and let
$Z_i$, for $1 \le i \le 5$, be the five admissible \emph{dot}-flags
of size three (recall that we are forbidding $\overline{K_3}$).
These flags are shown in Figure \ref{example_flags}.

\vspace{0.1in}
\begin{figure}[H]
\centering

\parbox{1in}{
\centering
\begin{tikzpicture}
  [scale=0.6,auto=left,every node/.style={circle, draw, fill=black!50,inner sep=0pt, minimum width=4pt}]
  \node (n1) at (180:1 cm) [label=left:$1$]{};

\end{tikzpicture}
\caption*{\emph{dot}} }
\parbox{1in}{
\centering
\begin{tikzpicture}
  [scale=0.6,auto=left,every node/.style={circle, draw, fill=black!50,inner sep=0pt, minimum width=4pt}]
  \node (n1) at (180:1 cm) [label=left:$1$]{};
  \node (n2) at (360:1 cm) {};

  \foreach \from/\to in {n1/n2}
    \draw (\from) -- (\to);

\end{tikzpicture}
\caption*{$\rho$} }
\parbox{1in}{
\centering
\begin{tikzpicture}
  [scale=0.6,auto=left,every node/.style={circle, draw, fill=black!50,inner sep=0pt, minimum width=4pt}]
  \node (n1) at (180:1 cm) [label=left:$1$]{};
  \node (n2) at (360:1 cm) {};

  \foreach \from/\to in {}
    \draw (\from) -- (\to);

\end{tikzpicture}
\caption*{$\overline{\rho}$} }

\vspace{0.3in}

\parbox{1in}{
\centering
\begin{tikzpicture}
  [scale=0.6,auto=left,every node/.style={circle, draw, fill=black!50,inner sep=0pt, minimum width=4pt}]
  \node (n1) at (180:1 cm) [label=left:$1$]{};
  \node (n2) at (300:1 cm) {};
  \node (n3) at (60:1 cm) {};

  \foreach \from/\to in {n1/n2}
    \draw (\from) -- (\to);

\end{tikzpicture}
\caption*{$Z_1$} }
\parbox{1in}{
\centering
\begin{tikzpicture}
  [scale=0.6,auto=left,every node/.style={circle, draw, fill=black!50,inner sep=0pt, minimum width=4pt}]
  \node (n1) at (180:1 cm) [label=left:$1$]{};
  \node (n2) at (300:1 cm) {};
  \node (n3) at (60:1 cm) {};

  \foreach \from/\to in {n2/n3}
    \draw (\from) -- (\to);

\end{tikzpicture}
\caption*{$Z_2$} }
\parbox{1in}{
\centering
\begin{tikzpicture}
  [scale=0.6,auto=left,every node/.style={circle, draw, fill=black!50,inner sep=0pt, minimum width=4pt}]
  \node (n1) at (180:1 cm) [label=left:$1$]{};
  \node (n2) at (300:1 cm) {};
  \node (n3) at (60:1 cm) {};

  \foreach \from/\to in {n1/n2,n1/n3}
    \draw (\from) -- (\to);

\end{tikzpicture}
\caption*{$Z_3$} }
\parbox{1in}{
\centering
\begin{tikzpicture}
  [scale=0.6,auto=left,every node/.style={circle, draw, fill=black!50,inner sep=0pt, minimum width=4pt}]
  \node (n1) at (180:1 cm) [label=left:$1$]{};
  \node (n2) at (300:1 cm) {};
  \node (n3) at (60:1 cm) {};

  \foreach \from/\to in {n1/n2,n2/n3}
    \draw (\from) -- (\to);

\end{tikzpicture}
\caption*{$Z_4$} }
\parbox{1in}{
\centering
\begin{tikzpicture}
  [scale=0.6,auto=left,every node/.style={circle, draw, fill=black!50,inner sep=0pt, minimum width=4pt}]
  \node (n1) at (180:1 cm) [label=left:$1$]{};
  \node (n2) at (300:1 cm) {};
  \node (n3) at (60:1 cm) {};

  \foreach \from/\to in {n1/n2,n1/n3,n2/n3}
    \draw (\from) -- (\to);

\end{tikzpicture}
\caption*{$Z_5$} }
\caption{Some examples of flags of type \emph{dot}.} \label{example_flags}
\end{figure}

We now compute the flag densities of $\rho$ and $\overline{\rho}$ in
the flags $Z_i$.  For example, to compute $p_{dot}( \overline{\rho};
Z_1)$, note that to induce a copy of $\overline{\rho}$ we must
choose an unlabeled non-neighbor of $1$.  As only one of the two
unlabeled vertices in $Z_1$ is a non-neighbor of $1$, we conclude
that $p_{dot}( \overline{\rho}; Z_1) = \frac{1}{2}$.  Similarly,
$p_{dot}( \rho; Z_3 ) = 1$, because to induce $\rho$ we must select
a neighbor of $1$, and all the unlabeled vertices in $Z_3$ are
neighbors of $1$.  The other flag densities are $p_{dot}(\rho; Z_5)
= p_{dot}(\overline{\rho};Z_2)=1$,
$p_{dot}(\rho;Z_1)=p_{dot}(\rho;Z_4)=p_{dot}(\overline{\rho};
Z_1)=p_{dot}(\overline{\rho};Z_4)=\frac12$, and $p_{dot}(\rho;Z_2) =
p_{dot}(\overline{\rho};Z_3) = p_{dot}(\overline{\rho};Z_5) = 0$.

\begin{figure}[H]
\centering
\begin{tikzpicture}
  [scale=0.6,auto=left,every node/.style={circle, draw, fill=black!50,inner sep=0pt, minimum width=4pt}]
  \node (n1) at (18:1 cm) {};
  \node (n2) at (90:1 cm)  {};
  \node (n3) at (162:1 cm)  {};
  \node (n4) at (234:1 cm)  {};
  \node (n5) at (306:1 cm)  {};

  \foreach \from/\to in {n1/n2,n1/n5,n2/n3,n2/n5,n3/n4,n3/n5,n4/n5}
    \draw (\from) -- (\to);

\end{tikzpicture}
\caption{Graph $W$.}
\label{example_graph}
\end{figure}

To see how to compute flag densities in an unlabeled graph, consider
$W$, the graph on $5$ vertices depicted in Figure
\ref{example_graph}. It is easy to see that $d_{dot}(\rho;W)$ and
$d_{dot}(\overline{\rho};W)$ are the edge and non-edge densities of
$W$ respectively, and so $d_{dot}(\rho;W)=\frac{7}{10}$ and
$d_{dot}(\overline{\rho};W)=\frac{3}{10}$.  The computation of
$d_{dot}(Z_i;W)$ is a little more involved.  As an example, we
explain how to compute $d_{dot}(Z_3;W)$.  Note that $Z_3$ consists
of two nonadjacent neighbors of the labeled vertex $1$.  Hence for
every vertex $v \in V(W)$, let $\kappa_v$ denote the number of
nonadjacent pairs neighbors of $v$ divided by the total number of
pairs of vertices in $V(W) \setminus \{ v \}$.  $d_{dot}(Z_3 ; W)$
is then the average of $\kappa_v$ over all vertices in $W$, which
comes out to $\frac{1}{6}$.  Computing the other flag densities
gives $d_{dot}(Z_1;W) = \frac{2}{15}$, $d_{dot}(Z_2;W) =
\frac{1}{15}$, $d_{dot}(Z_4;W) = \frac13$, and
$d_{dot}(Z_5,W)=\frac{3}{10}$.

We can also compute the joint flag densities of multiple flags. For
instance, let us consider $d_{dot}(\rho, \rho; W)$.  In this case,
we first randomly choose a vertex $v$ to be the labeled vertex.  We
must then make an \emph{ordered} choice of two vertices in $V(W)
\setminus \{v\}$, as we have two flags, each with one unlabeled
vertex.  If both of these vertices are neighbors of $v$, then we
have induced two copies of the flag $\rho$ (note that the adjacency
of these two vertices is unimportant).  Hence we obtain
$d_{dot}(\rho,\rho; W)$ by averaging over all vertices $v$ the ratio
of the number of ordered pairs of neighbors of $v$ to the number of
ordered pairs of vertices in $V(W) \setminus \{ v \}$.  In this
case, we have $d_{dot}( \rho, \rho ; W ) = \frac{7}{15}$.

\medskip

Suppose as before we have a type $\sigma$ of size $k$, a
$\sigma$-flag $F$ of size $l \ge k$, and an unlabeled graph $G$. To
compute $d_{\sigma}(F ; G)$, we averaged over all random partial
labelings of $G$ the probability of finding a flag isomorphic to
$F$. A simple double-counting argument shows that we can do the
averaging before the random labeling, which is the idea behind
Razborov's \emph{averaging operator}, as defined in Section 2.2 of
\cite{raz_flag}. Let $F|_0$ denote the unlabeled underlying graph of
$F$. We can compute $d_{\sigma}(F;G)$ by first computing
$d(F|_0;G)$, the probability that $l$ randomly chosen vertices in
$G$ form an induced copy of $F|_0$ as a subgraph.  Given this copy
of $F|_0$, we then randomly label $k$ of the $l$ vertices, and
compute the probability that these $k$ vertices are label-isomorphic
to $\sigma$.  This amounts to multiplying $d(F|_0; G)$ by a
\emph{normalizing factor} $q_{\sigma}(F)$, that is, $d_{\sigma}(F;G)
= q_{\sigma}(F) d(F|_0;G)= q_{\sigma}(F) p(F|_0;G)$.

We can interpret the normalizing factor as $q_{\sigma}(F) =
d_{\sigma}(F ; F|_0)$. From our previous example, we have
$q_{dot}(\rho)=q_{dot}(\overline{\rho})=q_{dot}(Z_5)=1$,
$q_{dot}(Z_3)=q_{dot}(Z_2)=\frac13$ and
$q_{dot}(Z_4)=q_{dot}(Z_1)=\frac23$. Since $q_{dot}(Z_5) = 1$, it
follows that $d_{dot}(Z_5;G)=d(K_3;G)$ is the triangle density of
$G$.

\medskip

There are more relations involving $d_\sigma$ and $p_\sigma$ than
the one mentioned previously. We will now state, without proof, a
basic fact about flag densities that can be proved easily by double
counting.
\begin{fact}[Chain rule]
\label{chain_rule}
If $\sigma$ is a type of size $k$, $m\ge 1$ is an integer, and $\{F_i\}_{i=1}^m$ is a family of $\sigma$-flags of sizes $|F_i|=l_i$, and $l \ge k + \sum_{i=1}^m (l_i - k)$ is an integer parameter, then
\begin{enumerate}
\item  For any $\sigma$-flag $F$ of order at least $l$, we have
\[
p_{\sigma}(F_1,\ldots,F_m;F) = \sum_{F' \in \mathcal{F}^{\sigma}_{l}} p_{\sigma}(F_1,\ldots, F_m;F') p_{\sigma}(F';F).
\]
\item  For any admissible graph $G$ of order at least $l$, we have
\[
d_{\sigma}(F_1,\ldots,F_m;G) = \sum_{H \in \mathcal{F}_{l}} d_{\sigma}(F_1,\ldots,F_m;H) d(H;G) =\sum_{F \in \mathcal{F}^{\sigma}_{l}} p_{\sigma}(F_1,\ldots, F_m;F) d_{\sigma}(F;G).
\]
\end{enumerate}
\end{fact}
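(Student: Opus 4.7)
The plan is to prove both identities by reinterpreting each flag density as the probability that a uniform random experiment produces the prescribed pattern, and then invoking the law of total probability via a two-stage sampling argument. In each case the identity reduces to the fact that sampling the required vertex subsets directly is equivalent to sampling them in two stages through an intermediate $l$-vertex subset.

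For part (1), I would fix a $\sigma$-flag $F$ of order at least $l$, let $T$ be its set of $k$ labeled vertices, and set $r := l - k$ and $r_i := l_i - k$. Consider the following two-stage experiment inside $V(F) \setminus T$. \emph{Stage A:} pick a uniformly random $r$-subset $X \subseteq V(F) \setminus T$; together with $T$ this yields a $\sigma$-flag $F'$ of size $l$, and by definition $F' \cong F_0$ with probability $p_\sigma(F_0; F)$ for each $F_0 \in \mathcal{F}^\sigma_l$. \emph{Stage B:} conditional on $X$ (equivalently, on $F'$), pick uniformly random disjoint subsets $X_i \subseteq X$ with $|X_i| = r_i$; the conditional probability that each $T \cup X_i$ induces $F_i$ is exactly $p_\sigma(F_1, \ldots, F_m; F')$. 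Summing over $F'$ recovers the right-hand side of (1). On the other hand, the marginal joint distribution of $(X_1, \ldots, X_m)$ produced by this two-stage experiment is uniform on all tuples of disjoint subsets of $V(F) \setminus T$ of the prescribed sizes, which is precisely the distribution used to define the left-hand side.

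For part (2), both equalities follow by averaging. The second equality is an immediate consequence of (1): by definition $d_\sigma(F_1, \ldots, F_m; G) = \mathbb{E}_L[\,p_\sigma(F_1, \ldots, F_m; G_L)\,]$, where $L$ is a uniformly random partial labeling, $G_L$ is the resulting flag, and the inner probability is set to zero when the induced labeled subgraph is not isomorphic to $\sigma$. Applying (1) to each $G_L$ and interchanging sum and expectation yields $\sum_F p_\sigma(F_1, \ldots, F_m; F)\, d_\sigma(F; G)$. For the first equality, I would replay the two-stage sampling at the level of unlabeled graphs: first draw a uniformly random $l$-subset $S \subseteq V(G)$ and record its induced unlabeled subgraph $H$, then run the labeling and the $X_i$ selection inside $H$. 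Conditioning on $H$ contributes the factor $d_\sigma(F_1, \ldots, F_m; H)$, while the marginal over $S$ contributes $d(H;G)$, and summing over $H \in \mathcal{F}_l$ gives the claim.

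The only genuine obstacle is the symmetry fact underlying the Stage A then Stage B argument, namely that directly choosing a tuple of disjoint subsets of sizes $r_1, \ldots, r_m$ uniformly at random has the same distribution as first choosing a uniformly random $r$-subset and then sub-partitioning it uniformly into pieces of the given sizes. This is an elementary combinatorial identity about uniform samples, but it is the crucial double-counting step that makes the chain rule work; once it is in hand, both parts of the statement fall out immediately.
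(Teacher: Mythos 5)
Your proof is correct: the two-stage sampling argument (sample an intermediate $l$-subset, then sample the labeled set and the $X_i$ inside it, and use the exchangeability fact that the marginal law of the tuple is the direct uniform one) is a complete and accurate justification of both identities, including the reduction of the second equality in (2) to part (1) by averaging over the random labeling. The paper itself states this Fact without proof, remarking only that it "can be proved easily by double counting," and your argument is precisely that double-counting/law-of-total-probability proof, so you have supplied in full the routine verification the authors chose to omit.
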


If we apply the chain rule for $m=1$, we have the equation
$p_{\sigma}(F;F') = \sum_{F'' \in \mathcal{F}^{\sigma}_{l}}
p_{\sigma}(F;F'') p_{\sigma}(F'';F')$. For instance, this gives
\begin{align*}
    p_{dot}(\rho; F) &= p_{dot}(\rho; Z_1) p_{dot}(Z_1; F) +
p_{dot}(\rho; Z_2) p_{dot}(Z_2; F) + p_{dot}(\rho; Z_3) p_{dot}(Z_3; F) + \\
    & \qquad p_{dot}(\rho; Z_4) p_{dot}(Z_4; F) + p_{dot}(\rho; Z_5)
p_{dot}(Z_5; F) \\
    &= \frac{1}{2} p_{dot}(Z_1; F) + p_{dot}(Z_3; F) +
\frac{1}{2} p_{dot}(Z_4; F) + p_{dot}(Z_5; F).
\end{align*}

Similarly, we can expand $p_{dot}(\overline{\rho}; F) = \frac{1}{2}
p_{dot}(Z_1;F) + p_{dot}(Z_2; F) + \frac{1}{2} p_{dot}(Z_4; F)$.

For the ease of notation, we can express these two identities using
the syntax of flag algebras:
\begin{align}
\rho &= \frac{1}{2} Z_1 + Z_3 + \frac{1}{2} Z_4 + Z_5, \label{edgeexpansion} \\
\overline{\rho} &= \frac{1}{2} Z_1 + Z_2 + \frac{1}{2} Z_4. \notag
\end{align}
In this syntax, the equation $\sum_{i\in I} \alpha_i F_i = 0$ means
that for all sufficiently large $\sigma$-flags $F$, we have
$\sum_{i\in I} \alpha_i p_{\sigma}(F_i; F) = 0$, where $\alpha_i \in
\mathbb{R}$ for all $i\in I$. We call $\sum_{i\in I} \alpha_i F_i$
an \emph{eventually zero} expression.  We use $\mathcal{A}^{\sigma}$ to denote the set of linear combinations of flags of type $\sigma$. It is convenient to define a \emph{product} of flags in the following way:
\[
F_1\cdot F_2 \stackrel{def}{=} \sum_{F\in \mathcal{F}^{\sigma}_l} p_{\sigma}(F_1,F_2; F) F, \qquad F_1 \in \mathcal{F}^{\sigma}, F_2 \in \mathcal{F}^{\sigma}, l \ge |F_1| + |F_2| - |\sigma|.
\]
(Note that it does not matter what $l$ we
choose, as the difference will be an eventually zero expression.) For example, instead of
writing $p_{dot}(\rho, \rho; F) = p_{dot}(Z_3; F) + p_{dot}(Z_5;
F)$, we could simply write $\rho^2 = \rho \cdot \rho = Z_3 + Z_5$.
For the flags of our running example, involving $\overline{K_3}$-free graphs, the following equations are
also easily verifiable: $\rho^2 = Z_3 + Z_5$, $\overline{\rho}^2 =
Z_2$, and $\rho \cdot \overline{\rho} = \frac{1}{2} Z_4 +
\frac{1}{2} Z_1$. Combining these equations, we arrive at the
following equation, which we shall later require in Section
\ref{sec43}:
\begin{equation}
\label{stability_equation}
4 \rho^2 \cdot \overline{\rho}^2 = 4 Z_2 \cdot (Z_3 + Z_5) = (Z_4 + Z_1)^2.
\end{equation}

To further simplify the notation, we can extend the definitions of
$p_{\sigma}$ and $d_{\sigma}$ to
$\mathcal{A}^{\sigma}$ by making them linear in each
coordinate. For example, $p_{\sigma}(F_1 + 2 F_2, 4 F_3; F_4 -
F_5)=4p_{\sigma}(F_1,F_3;F_4)-
4p_{\sigma}(F_1,F_3;F_5)+8p_{\sigma}(F_2,F_3;F_4)-8p_{\sigma}(F_2,F_3;F_5)$.
The product notation simplifies these extended
definitions, because $p_{\sigma}(f_1\cdot f_2; f) =
p_{\sigma}(f_1,f_2;f)$ and $d_{\sigma}(f_1\cdot f_2; g) =
d_{\sigma}(f_1, f_2; g)$, for any $f_1,f_2,f\in
\mathcal{A}^{\sigma}$ and for any $g\in\mathcal{A}^0$.

\medskip

The last piece of notation we introduce is that of the averaging
operator.  Recall that for any $\sigma$-flag $F$, we had the
normalizing factors $q_{\sigma}(F)$ such that $d_{\sigma}(F; G) =
q_{\sigma}(F) p(F|_0 ; G)$. In the syntax of flag algebra, this
averaging operation is denoted by
$[[F]]_{\sigma}\stackrel{def}{=}q_{\sigma} F|_0$ .  We can extend
this linearly to all elements of $\mathcal{A}^{\sigma}$. For example
\[
[[\rho]]_{dot} = K_2, \quad [[Z_5]]_{dot} = K_3, \text{ and } \quad [[Z_4 + Z_2]]_{dot} = \frac23 P_2 + \frac13 \overline{P_2},
\]
where $P_2$ is a path of length two on three vertices, and
$\overline{P_2}$ is its complement. This notation is useful, because
$d_{\sigma}(f; g)=p([[f]]_{\sigma}; g)$ for any
$f\in\mathcal{A}^{\sigma}$ and for any $g\in\mathcal{A}^0$, and
hence we have a unified notation for both types of flag densities.

\subsection{Extremal problems in the flag algebra calculus}

Recall that the typical problem is to minimize the density of some
fixed graph $J$ amongst all admissible graphs $G$ not containing a
forbidden subgraph.  We will show how flag algebras can be applied
to this problem to reduce it to a semi-definite programming (SDP)
problem, which can then be solved numerically.

We may use the chain rule to obtain, for any $t \ge |J|$, the
equation $d(J;G) = \sum_{H \in \mathcal{F}_t} d(J;H) d(H;G)$. Since
$\sum_{H\in \mathcal{F}_t} d(H;G)=1$, we have
\[
d(J;G) \ge \min_{H\in \mathcal{F}_t} d(J;H),
\]
which is a bound that clearly does not depend on $G$.

This inequality is often very weak, since it only uses very local
considerations about the subgraphs $H \in \mathcal{F}_t$, and does
not take into account how the subgraphs fit together in the larger
graph $G$; that is, how they intersect. For instance, returning to
our example of the $(3,3)$-problem, where $J=K_3$ and $t=3$, we
obtain $d(K_3;G) \ge \min_{H\in \mathcal{F}_3} d(K_3; H) =
d(K_3;P_2)=0$, which is the most trivial bound. However, by
considering how the graphs in $\mathcal{F}_3$ must intersect in $G$,
one might hope to find inequalities of the form $\sum_{H \in
\mathcal{F}_t} \alpha_H d(H;G) \ge 0$, such that when we combine
them with the initial identity, we get
\[
d(J;G) \ge d(J;G) - \sum_{H \in \mathcal{F}_t} \alpha_H d(H;G) = \sum_{H \in \mathcal{F}_t} (d(J;H) - \alpha_H) d(H;G) \ge \min_{H\in \mathcal{F}_t} \{d(J;H) - \alpha_H\}.
\]

Since $\alpha_H$ can be negative for some graphs $H$, the hope is
that this will improve the low coefficients by transferring weight
from high coefficients. In order to find such inequalities, we need
another property of the flag densities.

\begin{fact}
\label{small_error} If $\sigma$ is a type of size $k$, $m\ge 1$ is
an integer, $\{F_i\}_{i=1}^m$ is a family of $\sigma$-flags of sizes
$|F_i|=l_i$, and $l \ge k + \sum_{i=1}^m (l_i - k)$ is an integer,
then for any flag $F$ of order $n \ge l$, we have
\[
p_{\sigma}(F_1,\ldots,F_m;F) = \left[\prod_{i=1}^m p_{\sigma}(F_i;F)\right] + O(1/n).
\]
\end{fact}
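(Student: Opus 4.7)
The plan is a direct coupling between two sampling procedures on the unlabeled vertex set $V(F)\setminus T$, where $T$ is the set of $k$ labeled vertices of $F$. The first procedure samples disjoint subsets $X_1,\ldots,X_m$ of sizes $l_i-k$ jointly uniformly at random; this is exactly the definition of $p_\sigma(F_1,\ldots,F_m;F)$. The second procedure samples each $X_i$ independently and uniformly among $(l_i-k)$-subsets of $V(F)\setminus T$. Since the events $E_i$ are then functions of independent random variables, under the second procedure the probability that all of them occur equals $\prod_{i=1}^m p_\sigma(F_i;F)$.

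The key observation is that, conditional on the event $D$ that the independently sampled $X_1,\ldots,X_m$ are pairwise disjoint, the joint distribution of $(X_1,\ldots,X_m)$ coincides with that of the first procedure. Indeed, every pairwise-disjoint tuple has probability $\prod_i 1/\binom{n-k}{l_i-k}$ in the independent procedure, which depends only on the sizes $l_i-k$ and not on the particular tuple, so conditioning on $D$ yields the uniform distribution on disjoint tuples, which is the first procedure.

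It then remains to bound $\mathbb{P}(D^c)$. For any pair $i<j$, the probability that independently sampled $X_i$ and $X_j$ intersect is at most $(l_i-k)(l_j-k)/(n-k)$, by a union bound over the $l_i-k$ vertices of $X_i$ together with the observation that each has chance $(l_j-k)/(n-k)$ of lying in $X_j$. Summing over the $\binom{m}{2}$ pairs gives $\mathbb{P}(D^c)\le C/n$ for a constant $C$ depending only on $k$, $m$ and the $l_i$, valid for all $n$ sufficiently large in terms of these parameters.

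Combining everything,
\[
\prod_{i=1}^m p_\sigma(F_i;F) = \mathbb{P}(\cap_i E_i) = \mathbb{P}(\cap_i E_i \mid D)\,\mathbb{P}(D) + \mathbb{P}(\cap_i E_i \mid D^c)\,\mathbb{P}(D^c),
\]
and plugging in $\mathbb{P}(\cap_i E_i \mid D) = p_\sigma(F_1,\ldots,F_m;F)$, $\mathbb{P}(D) = 1-O(1/n)$, and the trivial bound $0 \le \mathbb{P}(\cap_i E_i \mid D^c) \le 1$ yields the claimed identity. There is no real obstacle here; the only point requiring care is verifying that the disjoint-sampling conditional distribution matches the first procedure, which is a standard uniformity observation about tuples of subsets.
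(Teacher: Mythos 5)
Your proposal is correct and follows essentially the same argument the paper sketches: compare disjoint sampling with independent sampling of the sets $X_i$, note the product formula holds in the independent setting, and bound the discrepancy by the $O(1/n)$ probability that the $X_i$ intersect. Your write-up simply fills in the details (the conditioning/uniformity observation and the union bound) that the paper leaves implicit.
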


One can prove Fact \ref{small_error} by noting that, if we drop the
requirement that the sets $X_i$ are disjoint in the definition of
$p_{\sigma}(F_1,\ldots,F_m;F)$, the events $E_i$ will become
independent, and thus $\mathbb{P}(\cap_{i=1}^m E_i) = \prod_{i=1}^m
\mathbb{P}(E_i) = \prod_{i=1}^m p_{\sigma}(F_i;F)$.  The error
introduced is the probability that these sets $X_i$ will intersect
in $F$, which is $O(1 / n)$.  It is tempting to claim a similar product formula for the unlabeled flag densities $d_{\sigma}$, but we cannot do so.  In the above equation, it is essential that all the $\sigma$-flags $F_i$ share the same labeled type $\sigma$, and hence we require $F$ to be a $\sigma$-flag.

\medskip

We are now ready to establish some inequalities. Let's first fix a
type $\sigma$ of size $k$. If $Q$ is any positive semi-definite
$|\mathcal{F}_l^{\sigma}|\times|\mathcal{F}_l^{\sigma}|$ matrix with
rows and columns indexed by the same set $\mathcal{F}_l^{\sigma}$,
where $l\ge k$, define
\[
Q\{\mathcal{F}_l^{\sigma}\} \stackrel{def}{=} \sum_{F_1,F_2 \in \mathcal{F}_l^{\sigma}} Q_{F_1,F_2} F_1 \cdot F_2 \in \mathcal{A}^{\sigma}.
\]
Since $Q$ was chosen to be positive semi-definite, we have
\[ p_{\sigma}(Q\{\mathcal{F}_l^{\sigma}\}; F)=\sum_{F_1,F_2 \in
\mathcal{F}_l^{\sigma}} Q_{F_1,F_2} p_{\sigma}(F_1;F)
p_{\sigma}(F_2;F) \ge 0 \]
for any $\sigma$-flags $F$ of order at
least $t=2l - k$. When averaging, we do not necessarily have
$p([[Q\{\mathcal{F}_l^{\sigma}\}]]_\sigma; G) \ge 0$ for an
admissible graph $G$ of order $n\ge t$, but we do have the following
inequality:
\begin{align*}
[[Q]]_{\sigma}(G) &\stackrel{def}{=} p([[Q\{\mathcal{F}_l^{\sigma}\}]]_{\sigma};G) = \sum_{F_1,F_2 \in \mathcal{F}_l^{\sigma}} Q_{F_1,F_2} d_{\sigma}(F_1,F_2;G) \\
&= \sum_{F_1,F_2 \in \mathcal{F}_l^{\sigma}} Q_{F_1,F_2} \left(
\sum_{F\in \mathcal{F}_{n}^{\sigma}} p_{\sigma}(F_1,F_2;F)
d_{\sigma}(F;G) \right) \\
&= \sum_{F\in \mathcal{F}_{n}^{\sigma}}
\left(\sum_{F_1,F_2 \in \mathcal{F}_l^{\sigma}} Q_{F_1,F_2}
p_{\sigma}(F_1,F_2;F)\right) d_{\sigma}(F;G) \\
&= \sum_{F\in \mathcal{F}_{n}^{\sigma}} \left(\sum_{F_1,F_2 \in
\mathcal{F}_l^{\sigma}} Q_{F_1,F_2} p_{\sigma}(F_1;F)
p_{\sigma}(F_2;F)\right) d_{\sigma}(F;G) + O(1/n) \ge
o_{n\to\infty}(1).
\end{align*}

 Therefore, when $n$ is large, we have that
$[[Q]]_{\sigma}(G)$ is asymptotically non-negative. For each
admissible graph $H$ of size exactly $t$, let $\alpha_H =
[[Q]]_{\sigma}(H)= \sum_{F_1,F_2 \in \mathcal{F}_t^{\sigma}}
Q_{F_1,F_2} d_{\sigma}(F_1,F_2;H)$. We then have
\[
 [[Q]]_{\sigma}(G) = \sum_{H\in \mathcal{F}_t} \alpha_H d(H;G) \ge o_{n\to\infty}(1).
\]
The expression in the middle of the above equation is called the
\emph{expansion} of $[[Q]]_{\sigma}(G)$ in graphs of size $t$, with
$\alpha_H$ the coefficients of the expansion. For the sake of
conciseness, we often omit the parameter $G$ and express this
asymptotic inequality (combined with the expansion in size $t$) in
the syntax of flag algebras
\begin{equation}
\label{positivity_flags}
 [[Q]]_{\sigma} \stackrel{def}{=} [[Q\{\mathcal{F}_l^{\sigma}\}]]_{\sigma}= \bigg[\bigg[\sum_{F_1,F_2\in\mathcal{F}^{\sigma}_l} Q_{F_1,F_2} F_1 \cdot F_2\bigg]\bigg]_{\sigma}=\sum_{H\in \mathcal{F}_t} \alpha_H H \ge 0.
\end{equation}
(Note that all inequalities between flags stated in the language of
flag algebras are asymptotic.)

For a concrete example, we return to the $(3,3)$-problem. If we use
the type $\sigma=dot$, flags of size $l=2$, expand in graphs of size
$t=3$, and consider
\[
Q = \begin{pmatrix}
+\frac34 & -\frac34 \\
-\frac34 & +\frac34
\end{pmatrix},
\]
where the rows and columns are indexed by $\rho$ and
$\overline{\rho}$ (in that order), we obtain
$Q\{\mathcal{F}_2^{dot}\}=\frac34 (\rho - \overline{\rho})^2=\frac34
(-Z_1-Z_4+Z_2+Z_3+Z_5)$. This expansion is obtained by substituting
the expressions for $\rho^2$, $\overline{\rho}^2$ and $\rho \cdot
\overline{\rho}$ that are given above Equation
\ref{stability_equation}. Averaging gives $[[Q]]_{\sigma} = \frac34
[[(\rho - \overline{\rho})^2]]_{dot} = \frac34 K_3 - \frac14 P_2 -
\frac14 \overline{P_2}$.  Recall that $K_3 + P_2 + \overline{P_2} =
1$, since we are only considering $\overline{K_3}$-free graphs.
Therefore $d(K_3;G) \ge \min_{H\in \mathcal{F}_3} \left\{ d(K_3;H) -
[[Q]]_{\sigma}(H) \right\} = \frac14$, which is the correct bound
for the $(3,3)$-problem.

\medskip

In general, if we have more than one inequality available, we can
combine them together, provided they are all expanded in the same
size $t$. Suppose we have $r$ inequalities given by the positive
semi-definite matrices $Q_i$ of the $\sigma_i$-flags of size $l_i$.
Adding them together, we obtain
\[
\sum_{i=1}^r [[Q_i]]_{\sigma_i} = \sum_{H \in \mathcal{F}_t} \alpha_H H \ge  0,
\]
where
\[
\alpha_H = \sum_{i=1}^r \left(\sum_{F_1,F_2 \in \mathcal{F}_{l_i}^{\sigma_i}} (Q_i)_{F_1,F_2} d_{\sigma_i}(F_1,F_2;H)\right),
\]
and we want to maximize $\min_{H \in \mathcal{F}_t} \left\{d(J;H) -
\alpha_H \right\}$.

\medskip

Thus we have transformed the original problem of finding a maximum
lower bound for $d(J;G)$ into a linear system involving the variables $(Q_i)_{F_k,F_l}$.  As we have the constraint that the matrices $Q_i$ should be positive semi-definite, this is a semi-definite programming problem.  To take the
minimum coefficient in the expansion, we introduce an artificial
variable $y$, and require it to be bounded above by all the
coefficients.  Hence we have the following SDP problem in the variables $y$ and $( Q_i )_{F_1,F_2}$:

\medskip

Maximize $y$, subject to the constraints:
\begin{itemize}
\item $s_H = d(J;H) - \sum_{i=1}^r \left(\sum_{F_1,F_2 \in \mathcal{F}_{l_i}^{\sigma_i}} (Q_i)_{F_1,F_2} d_{\sigma_i}(F_1,F_2;H)\right) - y \ge 0$ for all $H\in \mathcal{F}_t$. (The variables $s_H$ are called \emph{surplus}
variables.)
\item $Q_i$ is positive semi-definite for $i \in [r]$. (The matrices $Q_i$ are often called the \emph{block variables} of the SDP problem. We can assume without loss of generality that each $Q_i$ is symmetric, as otherwise we could replace $Q_i$ by $(Q_i + Q_i^T)
/2$.)
\end{itemize}

\medskip

A computer can solve this SDP problem numerically, allowing for an
efficient determination of the inequalities required to prove the
extremal problem.  For some practical remarks on the implementation
of flag algebras, please see Appendix \ref{app_impflags}.  We note
at this point, as shall be seen in Section \ref{sec43}, that the
solution to the SDP problem need not only give the asymptotic bound,
but can also provide some structural information about the extremal
graphs.

\section{The $(4,3)$-problem} \label{sec43}

In this section we will apply the flag algebra calculus to solve the
$(4,3)$-problem.  Recall in the $(4,3)$-problem we are interested in
finding the minimum number of $4$-cliques in a graph with
independence number less than $3$.  We prove that any graph on $n$
vertices with independence number at most $2$ must contain at least
$\frac{3}{25} \binom{n}{4} + O(n^3)$ $4$-cliques.  This bound is
attained by a balanced blow-up of $C_5$, which Nikiforov conjectured
to be optimal in \cite{nikiforov_pre}.

The first subsection contains our flag algebra results, which leads
to the asymptotic minimum density of $4$-cliques.  In the second
subsection we use the structural information from the flag algebras
to derive a stability result.  This allows us to determine the value
of $f(n,4,3)$ exactly for large $n$, and we show that a
nearly-balanced blow-up of $C_5$ is the unique extremal graph.

\subsection{The asymptotic result} \label{43_flags}

We begin by listing the admissible graphs of size $5$, the types
used in the proof, and the corresponding flags.  Note that the flags
of size 3 and type \emph{dot} in Figure \ref{dot_flags} are those we
used as examples in Section \ref{sec31}, Figure \ref{example_flags}.

\begin{figure}[H]
\centering
\parbox{0.8in}{
\centering
\begin{tikzpicture}
  [scale=0.6,auto=left,every node/.style={circle, draw, fill=black!50,inner sep=0pt, minimum width=4pt}]
  \node (n1) at (18:1 cm) {};
  \node (n2) at (90:1 cm)  {};
  \node (n3) at (162:1 cm)  {};
  \node (n4) at (234:1 cm)  {};
  \node (n5) at (306:1 cm)  {};

  \foreach \from/\to in {n2/n3,n2/n4,n2/n5,n3/n4,n3/n5,n4/n5}
    \draw (\from) -- (\to);

\end{tikzpicture}
\caption*{$G_1$} }
\parbox{0.8in}{
\centering
\begin{tikzpicture}
  [scale=0.6,auto=left,every node/.style={circle, draw, fill=black!50,inner sep=0pt, minimum width=4pt}]
  \node (n1) at (18:1 cm) {};
  \node (n2) at (90:1 cm)  {};
  \node (n3) at (162:1 cm)  {};
  \node (n4) at (234:1 cm)  {};
  \node (n5) at (306:1 cm)  {};

  \foreach \from/\to in {n1/n5,n2/n3,n2/n4,n3/n4,n3/n5,n4/n5}
    \draw (\from) -- (\to);

\end{tikzpicture}
\caption*{$G_2$} }
\parbox{0.8in}{
\centering
\begin{tikzpicture}
  [scale=0.6,auto=left,every node/.style={circle, draw, fill=black!50,inner sep=0pt, minimum width=4pt}]
  \node (n1) at (18:1 cm) {};
  \node (n2) at (90:1 cm)  {};
  \node (n3) at (162:1 cm)  {};
  \node (n4) at (234:1 cm)  {};
  \node (n5) at (306:1 cm)  {};

  \foreach \from/\to in {n1/n5,n2/n3,n2/n4,n2/n5,n3/n4,n3/n5,n4/n5}
    \draw (\from) -- (\to);

\end{tikzpicture}
\caption*{$G_3$} }
\parbox{0.8in}{
\centering
\begin{tikzpicture}
  [scale=0.6,auto=left,every node/.style={circle, draw, fill=black!50,inner sep=0pt, minimum width=4pt}]
  \node (n1) at (18:1 cm) {};
  \node (n2) at (90:1 cm)  {};
  \node (n3) at (162:1 cm)  {};
  \node (n4) at (234:1 cm)  {};
  \node (n5) at (306:1 cm)  {};

  \foreach \from/\to in {n1/n4,n1/n5,n2/n3,n2/n4,n2/n5,n3/n4,n3/n5,n4/n5}
    \draw (\from) -- (\to);

\end{tikzpicture}
\caption*{$G_4$} }
\parbox{0.8in}{
\centering
\begin{tikzpicture}
  [scale=0.6,auto=left,every node/.style={circle, draw, fill=black!50,inner sep=0pt, minimum width=4pt}]
  \node (n1) at (18:1 cm) {};
  \node (n2) at (90:1 cm)  {};
  \node (n3) at (162:1 cm)  {};
  \node (n4) at (234:1 cm)  {};
  \node (n5) at (306:1 cm)  {};

  \foreach \from/\to in {n1/n3,n1/n5,n2/n4,n2/n5,n3/n4,n3/n5,n4/n5}
    \draw (\from) -- (\to);

\end{tikzpicture}
\caption*{$G_5$} }
\parbox{0.8in}{
\centering
\begin{tikzpicture}
  [scale=0.6,auto=left,every node/.style={circle, draw, fill=black!50,inner sep=0pt, minimum width=4pt}]
  \node (n1) at (18:1 cm) {};
  \node (n2) at (90:1 cm)  {};
  \node (n3) at (162:1 cm)  {};
  \node (n4) at (234:1 cm)  {};
  \node (n5) at (306:1 cm)  {};

  \foreach \from/\to in {n1/n3,n1/n4,n1/n5,n2/n3,n2/n4,n2/n5,n3/n4,n3/n5,n4/n5}
    \draw (\from) -- (\to);

\end{tikzpicture}
\caption*{$G_6$} }
\parbox{0.8in}{
\centering
\begin{tikzpicture}
  [scale=0.6,auto=left,every node/.style={circle, draw, fill=black!50,inner sep=0pt, minimum width=4pt}]
  \node (n1) at (18:1 cm) {};
  \node (n2) at (90:1 cm)  {};
  \node (n3) at (162:1 cm)  {};
  \node (n4) at (234:1 cm)  {};
  \node (n5) at (306:1 cm)  {};

  \foreach \from/\to in {n1/n2,n3/n4,n3/n5,n4/n5}
    \draw (\from) -- (\to);

\end{tikzpicture}
\caption*{$G_7$} }
\parbox{0.8in}{
\centering
\begin{tikzpicture}
  [scale=0.6,auto=left,every node/.style={circle, draw, fill=black!50,inner sep=0pt, minimum width=4pt}]
  \node (n1) at (18:1 cm) {};
  \node (n2) at (90:1 cm)  {};
  \node (n3) at (162:1 cm)  {};
  \node (n4) at (234:1 cm)  {};
  \node (n5) at (306:1 cm)  {};

  \foreach \from/\to in {n1/n2,n2/n5,n3/n4,n3/n5,n4/n5}
    \draw (\from) -- (\to);

\end{tikzpicture}
\caption*{$G_8$} }
\parbox{0.8in}{
\centering
\begin{tikzpicture}
  [scale=0.6,auto=left,every node/.style={circle, draw, fill=black!50,inner sep=0pt, minimum width=4pt}]
  \node (n1) at (18:1 cm) {};
  \node (n2) at (90:1 cm)  {};
  \node (n3) at (162:1 cm)  {};
  \node (n4) at (234:1 cm)  {};
  \node (n5) at (306:1 cm)  {};

  \foreach \from/\to in {n1/n2,n1/n5,n2/n5,n3/n4,n3/n5,n4/n5}
    \draw (\from) -- (\to);

\end{tikzpicture}
\caption*{$G_9$} }
\parbox{0.8in}{
\centering
\begin{tikzpicture}
  [scale=0.6,auto=left,every node/.style={circle, draw, fill=black!50,inner sep=0pt, minimum width=4pt}]
  \node (n1) at (18:1 cm) {};
  \node (n2) at (90:1 cm)  {};
  \node (n3) at (162:1 cm)  {};
  \node (n4) at (234:1 cm)  {};
  \node (n5) at (306:1 cm)  {};

  \foreach \from/\to in {n1/n2,n1/n4,n2/n5,n3/n4,n3/n5,n4/n5}
    \draw (\from) -- (\to);

\end{tikzpicture}
\caption*{$G_{10}$} }
\parbox{0.8in}{
\centering
\begin{tikzpicture}
  [scale=0.6,auto=left,every node/.style={circle, draw, fill=black!50,inner sep=0pt, minimum width=4pt}]
  \node (n1) at (18:1 cm) {};
  \node (n2) at (90:1 cm)  {};
  \node (n3) at (162:1 cm)  {};
  \node (n4) at (234:1 cm)  {};
  \node (n5) at (306:1 cm)  {};

  \foreach \from/\to in {n1/n2,n1/n3,n2/n4,n3/n5,n4/n5}
    \draw (\from) -- (\to);

\end{tikzpicture}
\caption*{$G_{11}$} }
\parbox{0.8in}{
\centering
\begin{tikzpicture}
  [scale=0.6,auto=left,every node/.style={circle, draw, fill=black!50,inner sep=0pt, minimum width=4pt}]
  \node (n1) at (18:1 cm) {};
  \node (n2) at (90:1 cm)  {};
  \node (n3) at (162:1 cm)  {};
  \node (n4) at (234:1 cm)  {};
  \node (n5) at (306:1 cm)  {};

  \foreach \from/\to in {n1/n2,n1/n3,n2/n4,n2/n5,n3/n4,n3/n5,n4/n5}
    \draw (\from) -- (\to);

\end{tikzpicture}
\caption*{$G_{12}$} }
\parbox{0.8in}{
\centering
\begin{tikzpicture}
  [scale=0.6,auto=left,every node/.style={circle, draw, fill=black!50,inner sep=0pt, minimum width=4pt}]
  \node (n1) at (18:1 cm) {};
  \node (n2) at (90:1 cm)  {};
  \node (n3) at (162:1 cm)  {};
  \node (n4) at (234:1 cm)  {};
  \node (n5) at (306:1 cm)  {};

  \foreach \from/\to in {n1/n2,n1/n3,n1/n5,n2/n4,n2/n5,n3/n4,n3/n5,n4/n5}
    \draw (\from) -- (\to);

\end{tikzpicture}
\caption*{$G_{13}$} }
\parbox{0.8in}{
\centering
\begin{tikzpicture}
  [scale=0.6,auto=left,every node/.style={circle, draw, fill=black!50,inner sep=0pt, minimum width=4pt}]
  \node (n1) at (18:1 cm) {};
  \node (n2) at (90:1 cm)  {};
  \node (n3) at (162:1 cm)  {};
  \node (n4) at (234:1 cm)  {};
  \node (n5) at (306:1 cm)  {};

  \foreach \from/\to in {n1/n2,n1/n3,n1/n4,n1/n5,n2/n3,n2/n4,n2/n5,n3/n4,n3/n5,n4/n5}
    \draw (\from) -- (\to);

\end{tikzpicture}
\caption*{$G_{14}$} } \caption{Graphs of size $5$ with independence
number at most $2$.}
\end{figure}

\begin{figure}[H]
\centering
\parbox{0.8in}{
\centering
\begin{tikzpicture}
  [scale=0.6,auto=left,every node/.style={circle, draw, fill=black!50,inner sep=0pt, minimum width=4pt}]
  \node (n3) at (60:1 cm) [label=right:$3$]{};
  \node (n1) at (180:1 cm) [label=left:$1$]{};
  \node (n2) at (300:1 cm) [label=right:$2$]{};

  \foreach \from/\to in {n1/n2,n1/n3}
    \draw (\from) -- (\to);

\end{tikzpicture}
\caption*{$\tau_1$} }
\parbox{0.8in}{
\centering
\begin{tikzpicture}
  [scale=0.6,auto=left,every node/.style={circle, draw, fill=black!50,inner sep=0pt, minimum width=4pt}]
  \node (n4) at (45:1 cm) {};
  \node (n2) at (135:1 cm) [label=left:$2$]{};
  \node (n1) at (225:1 cm) [label=left:$1$]{};
  \node (n3) at (315:1 cm) [label=right:$3$]{};

  \foreach \from/\to in {n1/n2,n1/n3,n1/n4,n3/n4}
    \draw (\from) -- (\to);

\end{tikzpicture}
\caption*{$M_1$} }
\parbox{0.8in}{
\centering
\begin{tikzpicture}
  [scale=0.6,auto=left,every node/.style={circle, draw, fill=black!50,inner sep=0pt, minimum width=4pt}]
  \node (n4) at (45:1 cm) {};
  \node (n2) at (135:1 cm) [label=left:$2$]{};
  \node (n1) at (225:1 cm) [label=left:$1$]{};
  \node (n3) at (315:1 cm) [label=right:$3$]{};

  \foreach \from/\to in {n1/n2,n1/n3,n1/n4,n2/n4}
    \draw (\from) -- (\to);

\end{tikzpicture}
\caption*{$M_2$} }
\parbox{0.8in}{
\centering
\begin{tikzpicture}
  [scale=0.6,auto=left,every node/.style={circle, draw, fill=black!50,inner sep=0pt, minimum width=4pt}]
  \node (n4) at (45:1 cm) {};
  \node (n2) at (135:1 cm) [label=left:$2$]{};
  \node (n1) at (225:1 cm) [label=left:$1$]{};
  \node (n3) at (315:1 cm) [label=right:$3$]{};

  \foreach \from/\to in {n1/n2,n1/n3,n3/n4}
    \draw (\from) -- (\to);

\end{tikzpicture}
\caption*{$M_3$} }
\parbox{0.8in}{
\centering
\begin{tikzpicture}
  [scale=0.6,auto=left,every node/.style={circle, draw, fill=black!50,inner sep=0pt, minimum width=4pt}]
  \node (n4) at (45:1 cm) {};
  \node (n2) at (135:1 cm) [label=left:$2$]{};
  \node (n1) at (225:1 cm) [label=left:$1$]{};
  \node (n3) at (315:1 cm) [label=right:$3$]{};

  \foreach \from/\to in {n1/n2,n1/n3,n2/n4}
    \draw (\from) -- (\to);

\end{tikzpicture}
\caption*{$M_4$} } \caption{Type $\tau_1$ and its flags of size $4$.}
\end{figure}

\begin{figure}[H]
\centering
\parbox{0.8in}{
\centering
\begin{tikzpicture}
  [scale=0.6,auto=left,every node/.style={circle, draw, fill=black!50,inner sep=0pt, minimum width=4pt}]
  \node (n3) at (60:1 cm) [label=right:$3$]{};
  \node (n1) at (180:1 cm) [label=left:$1$]{};
  \node (n2) at (300:1 cm) [label=right:$2$]{};

  \foreach \from/\to in {n1/n2,n1/n3,n2/n3}
    \draw (\from) -- (\to);

\end{tikzpicture}
\caption*{$\tau_2$} }
\parbox{0.8in}{
\centering
\begin{tikzpicture}
  [scale=0.6,auto=left,every node/.style={circle, draw, fill=black!50,inner sep=0pt, minimum width=4pt}]
  \node (n4) at (45:1 cm) {};
  \node (n2) at (135:1 cm) [label=left:$2$]{};
  \node (n1) at (225:1 cm) [label=left:$1$]{};
  \node (n3) at (315:1 cm) [label=right:$3$]{};

  \foreach \from/\to in {n1/n2,n1/n3,n2/n3}
    \draw (\from) -- (\to);

\end{tikzpicture}
\caption*{$N_1$} }
\parbox{0.8in}{
\centering
\begin{tikzpicture}
  [scale=0.6,auto=left,every node/.style={circle, draw, fill=black!50,inner sep=0pt, minimum width=4pt}]
  \node (n4) at (45:1 cm) {};
  \node (n2) at (135:1 cm) [label=left:$2$]{};
  \node (n1) at (225:1 cm) [label=left:$1$]{};
  \node (n3) at (315:1 cm) [label=right:$3$]{};

  \foreach \from/\to in {n1/n2,n1/n3,n2/n3,n3/n4}
    \draw (\from) -- (\to);

\end{tikzpicture}
\caption*{$N_2$} }
\parbox{0.8in}{
\centering
\begin{tikzpicture}
  [scale=0.6,auto=left,every node/.style={circle, draw, fill=black!50,inner sep=0pt, minimum width=4pt}]
  \node (n4) at (45:1 cm) {};
  \node (n2) at (135:1 cm) [label=left:$2$]{};
  \node (n1) at (225:1 cm) [label=left:$1$]{};
  \node (n3) at (315:1 cm) [label=right:$3$]{};

  \foreach \from/\to in {n1/n2,n1/n3,n2/n3,n2/n4}
    \draw (\from) -- (\to);

\end{tikzpicture}
\caption*{$N_3$} }
\parbox{0.8in}{
\centering
\begin{tikzpicture}
  [scale=0.6,auto=left,every node/.style={circle, draw, fill=black!50,inner sep=0pt, minimum width=4pt}]
  \node (n4) at (45:1 cm) {};
  \node (n2) at (135:1 cm) [label=left:$2$]{};
  \node (n1) at (225:1 cm) [label=left:$1$]{};
  \node (n3) at (315:1 cm) [label=right:$3$]{};

  \foreach \from/\to in {n1/n2,n1/n3,n2/n3,n1/n4}
    \draw (\from) -- (\to);

\end{tikzpicture}
\caption*{$N_4$} }
\parbox{0.8in}{
\centering
\begin{tikzpicture}
  [scale=0.6,auto=left,every node/.style={circle, draw, fill=black!50,inner sep=0pt, minimum width=4pt}]
  \node (n4) at (45:1 cm) {};
  \node (n2) at (135:1 cm) [label=left:$2$]{};
  \node (n1) at (225:1 cm) [label=left:$1$]{};
  \node (n3) at (315:1 cm) [label=right:$3$]{};

  \foreach \from/\to in {n1/n2,n1/n3,n2/n3,n2/n4,n3/n4}
    \draw (\from) -- (\to);

\end{tikzpicture}
\caption*{$N_5$} }
\parbox{0.8in}{
\centering
\begin{tikzpicture}
  [scale=0.6,auto=left,every node/.style={circle, draw, fill=black!50,inner sep=0pt, minimum width=4pt}]
  \node (n4) at (45:1 cm) {};
  \node (n2) at (135:1 cm) [label=left:$2$]{};
  \node (n1) at (225:1 cm) [label=left:$1$]{};
  \node (n3) at (315:1 cm) [label=right:$3$]{};

  \foreach \from/\to in {n1/n2,n1/n3,n2/n3,n1/n4,n3/n4}
    \draw (\from) -- (\to);

\end{tikzpicture}
\caption*{$N_6$} }
\parbox{0.8in}{
\centering
\begin{tikzpicture}
  [scale=0.6,auto=left,every node/.style={circle, draw, fill=black!50,inner sep=0pt, minimum width=4pt}]
  \node (n4) at (45:1 cm) {};
  \node (n2) at (135:1 cm) [label=left:$2$]{};
  \node (n1) at (225:1 cm) [label=left:$1$]{};
  \node (n3) at (315:1 cm) [label=right:$3$]{};

  \foreach \from/\to in {n1/n2,n1/n3,n2/n3,n1/n4,n2/n4}
    \draw (\from) -- (\to);

\end{tikzpicture}
\caption*{$N_7$} }
\parbox{0.8in}{
\centering
\begin{tikzpicture}
  [scale=0.6,auto=left,every node/.style={circle, draw, fill=black!50,inner sep=0pt, minimum width=4pt}]
  \node (n4) at (45:1 cm) {};
  \node (n2) at (135:1 cm) [label=left:$2$]{};
  \node (n1) at (225:1 cm) [label=left:$1$]{};
  \node (n3) at (315:1 cm) [label=right:$3$]{};

  \foreach \from/\to in {n1/n2,n1/n3,n2/n3,n1/n4,n2/n4,n3/n4}
    \draw (\from) -- (\to);

\end{tikzpicture}
\caption*{$N_8$} } \caption{Type $\tau_2$ and its flags of size $4$.}
\end{figure}

\begin{figure}[H]
\centering
\parbox{0.8in}{
\centering
\begin{tikzpicture}
  [scale=0.6,auto=left,every node/.style={circle, draw, fill=black!50,inner sep=0pt, minimum width=4pt}]
  \node (n1) at (0:0 cm) [label=left:$1$]{};

  \foreach \from/\to in {}
    \draw (\from) -- (\to);

\end{tikzpicture}
\caption*{$dot$} }
\parbox{0.8in}{
\centering
\begin{tikzpicture}
  [scale=0.6,auto=left,every node/.style={circle, draw, fill=black!50,inner sep=0pt, minimum width=4pt}]
  \node (n2) at (60:1 cm) {};
  \node (n1) at (180:1 cm) [label=left:$1$]{};
  \node (n3) at (300:1 cm) {};

  \foreach \from/\to in {n1/n3}
    \draw (\from) -- (\to);

\end{tikzpicture}
\caption*{$Z_1$} }
\parbox{0.8in}{
\centering
\begin{tikzpicture}
  [scale=0.6,auto=left,every node/.style={circle, draw, fill=black!50,inner sep=0pt, minimum width=4pt}]
  \node (n2) at (60:1 cm) {};
  \node (n1) at (180:1 cm) [label=left:$1$]{};
  \node (n3) at (300:1 cm) {};

  \foreach \from/\to in {n2/n3}
    \draw (\from) -- (\to);

\end{tikzpicture}
\caption*{$Z_2$} }
\parbox{0.8in}{
\centering
\begin{tikzpicture}
  [scale=0.6,auto=left,every node/.style={circle, draw, fill=black!50,inner sep=0pt, minimum width=4pt}]
  \node (n2) at (60:1 cm) {};
  \node (n1) at (180:1 cm) [label=left:$1$]{};
  \node (n3) at (300:1 cm) {};

  \foreach \from/\to in {n1/n2,n1/n3}
    \draw (\from) -- (\to);

\end{tikzpicture}
\caption*{$Z_3$} }
\parbox{0.8in}{
\centering
\begin{tikzpicture}
  [scale=0.6,auto=left,every node/.style={circle, draw, fill=black!50,inner sep=0pt, minimum width=4pt}]
  \node (n2) at (60:1 cm) {};
  \node (n1) at (180:1 cm) [label=left:$1$]{};
  \node (n3) at (300:1 cm) {};

  \foreach \from/\to in {n1/n3,n2/n3}
    \draw (\from) -- (\to);

\end{tikzpicture}
\caption*{$Z_4$} }
\parbox{0.8in}{
\centering
\begin{tikzpicture}
  [scale=0.6,auto=left,every node/.style={circle, draw, fill=black!50,inner sep=0pt, minimum width=4pt}]
  \node (n2) at (60:1 cm) {};
  \node (n1) at (180:1 cm) [label=left:$1$]{};
  \node (n3) at (300:1 cm) {};

  \foreach \from/\to in {n1/n2,n2/n3,n1/n3}
    \draw (\from) -- (\to);

\end{tikzpicture}
\caption*{$Z_5$} } \caption{Type \emph{dot} and its flags of size $3$.}
\label{dot_flags}
\end{figure}

For each of the types used in the proof, we express the corresponding positive semi-definite matrices as a sum of squares.  In the lemmas that follow, we give these sums of squares, their expansions into the admissible graphs of size 5, and provide sketches of combinatorial proofs (note that the lemmas were initially obtained by solving the corresponding SDP problem).  We begin with the type $\tau_1$.

\begin{lemma} \label{lem41}
\[
\Delta_1=\bigg[\bigg[\left(M_2 + M_4 - M_1 - M_3\right)^2\bigg]\bigg]_{\tau_1}=\frac{1}{30}\cdot \left(2G_2+3G_3-G_5-G_8-4G_9-2G_{10}-5G_{11}\right) \ge 0.
\]
\end{lemma}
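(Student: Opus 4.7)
The inequality $\Delta_1 \ge 0$ is automatic from the sum-of-squares formalism developed above: for any $X \in \mathcal{A}^{\tau_1}$, the averaged expression $[[X^2]]_{\tau_1}$ is asymptotically non-negative by the general principle in Equation (\ref{positivity_flags}). All of the substance of the lemma therefore resides in the stated identity expressing $\Delta_1$ as an explicit linear combination of the size-$5$ admissible graphs $G_k$.

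The plan is to verify this identity by direct computation. Writing $\epsilon_1 = \epsilon_3 = -1$ and $\epsilon_2 = \epsilon_4 = +1$, I would first expand $(M_2 + M_4 - M_1 - M_3)^2 = \sum_{i,j \in \{1,2,3,4\}} \epsilon_i \epsilon_j\, M_i \cdot M_j$. Each product $M_i \cdot M_j$ unfolds into a linear combination of $\tau_1$-flags of size $|M_i| + |M_j| - |\tau_1| = 5$: two fresh unlabeled vertices $v_i, v_j$ are attached to the labeled copy of $\tau_1$ on $\{1,2,3\}$, the subflag on $\{1,2,3,v_i\}$ is required to be isomorphic to $M_i$ (and similarly for $v_j$), and both choices for the edge $\{v_i,v_j\}$ are considered. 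Applying the averaging operator $[[\cdot]]_{\tau_1}$ then groups the resulting size-$5$ $\tau_1$-flags by their underlying graphs $F|_0 \in \{G_1,\dots,G_{14}\}$, each reweighted by the normalizing factor $q_{\tau_1}$.

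In practice it is cleaner to verify the identity graph-by-graph. For each of the fourteen $G_k$, I would compute $[[X^2]]_{\tau_1}(G_k) = \sum_{i,j} \epsilon_i \epsilon_j\, d_{\tau_1}(M_i, M_j; G_k)$ directly from the definition of $d_{\tau_1}$: average over random injections $L:[3] \to V(G_k)$ whose image induces $\tau_1$, then over ordered pairs $(v_i, v_j)$ of distinct unlabeled vertices, tallying the contribution when $(L, v_i)$ realizes $M_i$ and $(L, v_j)$ realizes $M_j$. One then checks that the resulting coefficient agrees with $\frac{1}{30}$ times the coefficient of $G_k$ on the right-hand side (including the value $0$ for the $G_k$ not listed). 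Since everything is linear in $G_k$, matching coefficients across all fourteen graphs is enough.

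The only real obstacle is bookkeeping: sixteen signed terms, up to four size-$5$ extensions per product, and fourteen target graphs, each with many labelings to enumerate. There is no conceptual subtlety to overcome -- the non-negativity is built in by construction, and the decomposition $X = M_2 + M_4 - M_1 - M_3$ was extracted from the numerical solution of the SDP described in Section \ref{flag_intro}. Once the decomposition is specified, the verification on each $G_k$ is a routine finite check, well-suited to a computer but feasible by hand on any given graph; the sketched combinatorial proof in the paper is essentially a compact presentation of this enumeration.
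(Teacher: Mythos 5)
Your proposal is correct and follows essentially the same route as the paper: non-negativity comes from the generic sum-of-squares principle (which the paper instantiates concretely via the weights $d_p(v)\in\{0,\pm1\}$ recording which $M_i$ an unlabeled vertex realizes), and the identity is verified by a graph-by-graph enumeration of placements of the labeled triple and the two unlabeled vertices inside each $G_k$ — exactly the finite check the paper carries out for $G_{10}$ and omits for the rest. The only difference is presentational: you phrase the tally as $\sum_{i,j}\epsilon_i\epsilon_j d_{\tau_1}(M_i,M_j;G_k)$, while the paper packages the same count through the signed function $d_p$ and the normalization $\tfrac{1}{120\binom{n}{5}}$.
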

\begin{proof}[Sketch of proof]
Let $G=(V,E)$ be a graph on $n$ vertices. Define $\tau_1(G)=\{(x,y,z) \in V(G)^3: \{x,y\}, \{x,z\} \in E(G) \text{ and } \{y,z\}\not\in E(G)\}$. Every triple $(x,y,z)\in \tau_1(G)$ induces a copy of of the type $\tau_1$ in $G$, where vertex $x$ is labelled ``1'', vertex $y$ is labelled ``2'' and vertex $z$ is labelled ``3''.  Fix some $p = (x,y,z) \in \tau_1(G)$.  Note that $M_2$ and $M_4$ are flags where the unlabeled vertex is adjacent to $2$ but not $3$, while $M_1$ and $M_3$ are flags with the unlabeled vertex adjacent to $3$ but not $2$.  Hence we define
\[
d_p(v) \stackrel{def}{=} \left\{ \begin{array}{rl}
1, & \text{if } \{v, y\} \in E(G) \text{ but } \{v, z\} \not\in E(G), \\
-1, & \text{if } \{v, z\} \in E(G) \text{ but } \{v, y\} \not\in E(G), \\
0, & \text{otherwise,}
\end{array}\right.
\]
for each $v \in V(G)\setminus\{x, y, z\}$. If we denote by $F$ the flag induced by the labelled vertices $\{x, y, z\}$ together with the unlabelled vertex $v$, we have
\[
d_p(v) = \left\{ \begin{array}{rl}
1, & \text{if $F=M_2$ or $F=M_4$,}\\
-1, & \text{if $F=M_1$ or $F=M_3$,}\\
0, & \text{otherwise.}
\end{array}\right.
\]
Thus the combinatorial interpretation of the lemma is

\begin{align*}
\Delta_1(G) &= \frac{1}{3!\binom{n}{3}} \cdot\left[ \sum_{p=(x,y,z)\in \tau_1(G)} \frac{1}{2 \binom{n-3}{2} } \left(\sum_{\substack{v,w \not\in \{x,y,z\}\\v\ne w}} d_p(v) d_p(w)\right)\right] \\
    &= \frac{1}{120 \binom{n}{5}} \sum_{p = (x,y,z) \in \tau_1(G)} \sum_{\substack{v,w \notin \{x,y,z \} \\ v \ne w}} d_p(v) d_p(w) \ge o_{n\to\infty}(1).
\end{align*}
The proof that this summation is asymptotically non-negative is very simple, since
\[
\sum_{\substack{v,w \not\in \{x,y,z\} \\ v\ne w}} d_p(v) d_p(w) = \left(\sum_{v \not\in \{x,y,z\}} d_p(v)\right)^2 - \sum_{v\not\in \{x,y,z\}} d_p(v)^2,
\]
and
\[
\frac{1}{120 \binom{n}{5}} \cdot\left[ \sum_{p=(x,y,z)\in \tau_1(G)} \left(\sum_{v \not\in \{x,y,z\}} d_p(v)^2\right)\right] = O(1 /  n ).
\]
It remains to expand the products of the flags into admissible graphs of size 5, and thus show that $\Delta_1=\frac{1}{30}\cdot
\left(2G_2+3G_3-G_5-G_8-4G_9-2G_{10}-5G_{11}\right)$. For the sake of conciseness, we omit the full details of this calculation.  We show how to compute the coefficient of $G_{10}$, that is, $\Delta_1(G_{10})$; the other coefficients follow similarly.

In this case, the set $\{x, y, z, v, w\}$ spans a copy of $G_{10}$.

\begin{figure}[H]
\centering
\parbox{1.0in}{
\centering
\begin{tikzpicture}
  [scale=0.6,auto=left,every node/.style={circle, draw, fill=black!50,inner sep=0pt, minimum width=4pt}]
  \node (n1) at (18:1 cm)  {};
  \node (n2) at (90:1 cm)  [label=left:$y$]{};
  \node (n3) at (162:1 cm)  [label=left:$x$]{};
  \node (n4) at (234:1 cm)  [label=left:$z$]{};
  \node (n5) at (306:1 cm)  {};

  \foreach \from/\to in {n1/n2,n2/n3,n3/n4,n4/n5,n5/n1,n1/n3}
    \draw (\from) -- (\to);

\end{tikzpicture}
\caption*{$-4$}
}
\parbox{1.0in}{
\centering
\begin{tikzpicture}
  [scale=0.6,auto=left,every node/.style={circle, draw, fill=black!50,inner sep=0pt, minimum width=4pt}]
  \node (n1) at (18:1 cm)  {};
  \node (n2) at (90:1 cm)  [label=left:$z$]{};
  \node (n3) at (162:1 cm)  [label=left:$x$]{};
  \node (n4) at (234:1 cm)  [label=left:$y$]{};
  \node (n5) at (306:1 cm)  {};

  \foreach \from/\to in {n1/n2,n2/n3,n3/n4,n4/n5,n5/n1,n1/n3}
    \draw (\from) -- (\to);

\end{tikzpicture}
\caption*{$-4$}
}
\parbox{1.0in}{
\centering
\begin{tikzpicture}
  [scale=0.6,auto=left,every node/.style={circle, draw, fill=black!50,inner sep=0pt, minimum width=4pt}]
  \node (n1) at (18:1 cm)  [label=right:$y$]{};
  \node (n2) at (90:1 cm)  {};
  \node (n3) at (162:1 cm)  [label=left:$x$]{};
  \node (n4) at (234:1 cm)  [label=left:$z$]{};
  \node (n5) at (306:1 cm)  {};

  \foreach \from/\to in {n1/n2,n2/n3,n3/n4,n4/n5,n5/n1,n1/n3}
    \draw (\from) -- (\to);

\end{tikzpicture}
\caption*{$0$}
}
\parbox{1.0in}{
\centering
\begin{tikzpicture}
  [scale=0.6,auto=left,every node/.style={circle, draw, fill=black!50,inner sep=0pt, minimum width=4pt}]

  \node (n1) at (18:1 cm)  [label=right:$z$]{};
  \node (n2) at (90:1 cm)  {};
  \node (n3) at (162:1 cm)  [label=left:$x$]{};
  \node (n4) at (234:1 cm)  [label=left:$y$]{};
  \node (n5) at (306:1 cm)  {};

  \foreach \from/\to in {n1/n2,n2/n3,n3/n4,n4/n5,n5/n1,n1/n3}
    \draw (\from) -- (\to);

\end{tikzpicture}
\caption*{$0$}
}
\parbox{1.0in}{
\centering
\begin{tikzpicture}
  [scale=0.6,auto=left,every node/.style={circle, draw, fill=black!50,inner sep=0pt, minimum width=4pt}]

  \node (n1) at (18:1 cm)  {};
  \node (n2) at (90:1 cm)  {};
  \node (n3) at (162:1 cm)  [label=left:$y$]{};
  \node (n4) at (234:1 cm)  [label=left:$x$]{};
  \node (n5) at (306:1 cm)  [label=right:$z$]{};

  \foreach \from/\to in {n1/n2,n2/n3,n3/n4,n4/n5,n5/n1,n1/n3}
    \draw (\from) -- (\to);

\end{tikzpicture}
\caption*{$0$}
}
\parbox{1.0in}{
\centering
\begin{tikzpicture}
  [scale=0.6,auto=left,every node/.style={circle, draw, fill=black!50,inner sep=0pt, minimum width=4pt}]

  \node (n1) at (18:1 cm)  {};
  \node (n2) at (90:1 cm)  {};
  \node (n3) at (162:1 cm)  [label=left:$z$]{};
  \node (n4) at (234:1 cm)  [label=left:$x$]{};
  \node (n5) at (306:1 cm)  [label=right:$y$]{};

  \foreach \from/\to in {n1/n2,n2/n3,n3/n4,n4/n5,n5/n1,n1/n3}
    \draw (\from) -- (\to);

\end{tikzpicture}
\caption*{$0$}
}
\caption{Possible configurations of $p$ inside $G_{10}$ and corresponding contributions to $\Delta_1(G_{10})$.}
\label{lemma_cases}
\end{figure}

We have the following cases:
\begin{enumerate}
\item Vertex $x$ is one of the vertices of degree $3$. There are two choices of $x$ satisfying this condition. We have the following subcases:
\begin{enumerate}
\item Vertex $y$ is the vertex of degree $2$ of the triangle containing $x$ and $z$ is only neighbor of $x$ which is not adjacent to $y$. This configuration corresponds to the first graph in Figure \ref{lemma_cases}.  As one of the unlabeled vertices is adjacent to $y$ and not $z$, and the other is adjacent to $z$ and not $y$, both assignments of $v$ and $w$, we have $d_p(v) d_p(w) = -1$. As there are two choices for the pair $(v,w)$ and two choices for $x$, the total contribution for this configuration is $-4$.
\item The same configuration as above, but with the roles of $y$ and $z$ swapped. This configuration corresponds to the second graph in Figure \ref{lemma_cases} and its contribution is $-4$.
\item Vertex $y$ is the other vertex of degree $3$ and $z$ is the only neighbor of $x$ which is not adjacent to $y$. This configuration corresponds to the third graph in Figure \ref{lemma_cases}. For any possible choice of $v$ and $w$, we have $d_p(v)\cdot d_p(w) = 0$, hence the total contribution is $0$.
\item The same configuration as above, but with the roles of $y$ and $z$ swapped. This configuration corresponds to the fourth graph in Figure \ref{lemma_cases} and its contribution is $0$.
\end{enumerate}
\item Vertex $x$ is one of the vertices of degree $2$ not in the triangle. Again we have two choices of $x$ satisfying this condition. We also have the following subcases:
\begin{enumerate}
\item Vertex $y$ is the only neighbor of $x$ of degree $3$ and $z$ is the other neighbor. This configuration corresponds to the fifth graph in Figure \ref{lemma_cases}. For any possible choice of $v$ and $w$, we have $d_p(v)\cdot d_p(w) = 0$, hence the total contribution for this configuration is $0$.
\item The same configuration as above, but with the roles of $y$ and $z$ swapped. This configuration corresponds to the last graph in Figure \ref{lemma_cases} and its contribution is $0$.
\end{enumerate}
\end{enumerate}
When we sum the contributions we get $-8$, and hence the coefficient of $G_{10}$ is $\Delta_1(G_{10})=-\frac{8}{120}=-\frac{1}{15}$.

\end{proof}

We now consider the type $\tau_2$.

\begin{lemma} \label{lem42}
\begin{align*}
\Delta_2 &= \bigg[\bigg[\left(-3N_1-3N_2-3N_3-3N_4+2N_5+2N_6+2N_7+2N_8 \right)^2\bigg]\bigg]_{\tau_2} \\
    &= \frac{1}{10}\cdot\left(-24G_1-12G_2-24G_3-8G_5+28G_6+9G_7+9G_8+\right. \\
    & \qquad \qquad \left.18G_9+9G_{10}-12G_{12}+16G_{13}+40G_{14}\right) \ge 0.
\end{align*}
\end{lemma}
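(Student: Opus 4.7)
The plan is to mirror the combinatorial argument used for Lemma~\ref{lem41}. Let $G=(V,E)$ be a graph on $n$ vertices and write
\[
\tau_2(G) = \{(x,y,z) \in V^3 : \{x,y\}, \{y,z\}, \{x,z\} \in E\}
\]
for the set of ordered triples forming a triangle; each such triple gives a labelled copy of the type $\tau_2$. For $p=(x,y,z)\in \tau_2(G)$ and $v \in V \setminus \{x,y,z\}$, I would set
\[
d_p(v) = \begin{cases} -3, & \text{if $v$ has at most one neighbor in $\{x,y,z\}$}, \\ +2, & \text{if $v$ has at least two neighbors in $\{x,y,z\}$}. \end{cases}
\]
Inspecting the flags $N_1,\ldots,N_8$ shows that if $F$ denotes the flag induced by $\{x,y,z\}$ (labelled $1,2,3$) together with $v$, then $d_p(v)$ is exactly the coefficient of $F$ in the expression $-3N_1 - 3N_2 - 3N_3 - 3N_4 + 2N_5 + 2N_6 + 2N_7 + 2N_8$: the flags $N_1,\ldots,N_4$ are those in which $v$ has $0$ or $1$ neighbors among the triangle, while $N_5,\ldots,N_8$ are those in which $v$ has $2$ or $3$.

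Exactly as in the derivation of Lemma~\ref{lem41}, unpacking the averaging operator would yield
\[
\Delta_2(G) = \frac{1}{120\binom{n}{5}} \sum_{p \in \tau_2(G)} \sum_{\substack{v,w \notin \{x,y,z\} \\ v \ne w}} d_p(v) d_p(w) + o_{n\to\infty}(1).
\]
I would then rewrite the inner sum as $\bigl(\sum_v d_p(v)\bigr)^2 - \sum_v d_p(v)^2$. The first term is a square and hence non-negative, while the second term contributes only $O(1/n)$ after averaging, since the diagonal $v=w$ is a vanishing fraction of the pairs. This establishes $\Delta_2(G) \ge o_{n\to\infty}(1)$, i.e.\ the asymptotic non-negativity.

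To verify the explicit expansion, for each admissible $5$-vertex graph $G_i$ I would enumerate all ordered triangles $(x,y,z)$ inside $G_i$, compute $d_p$ on the two remaining vertices, and sum $d_p(v)d_p(w)$ over both orderings of $(v,w)$; dividing the total by $120$ gives the coefficient $\Delta_2(G_i)$. Two sanity checks support this: $G_{14}=K_5$ contains $60$ ordered triangles, each leaving a pair of vertices with $d_p=+2$, so the contribution is $60 \cdot 2 \cdot 4 = 480$, matching coefficient $4 = \tfrac{40}{10}$; while $G_{11}$ is a $5$-cycle and contains no triangles, so it should not appear in the expansion, in agreement with the claim. Similarly, for $G_1 = K_4 \cup K_1$ the $24$ ordered triangles each leave the fourth $K_4$-vertex ($d_p=+2$) and the isolated vertex ($d_p=-3$), yielding $24 \cdot 2 \cdot (-6) = -288$ and coefficient $-\tfrac{24}{10}$.

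The principal obstacle is purely the bookkeeping: there are twelve nonzero coefficients to confirm, and for graphs with many triangles (such as $G_6$, $G_{13}$ or $G_{14}$) one must enumerate each triangle, determine the neighborhoods of the remaining two vertices in that triangle, evaluate $d_p$, and aggregate the signed contributions. Each case is a small finite calculation, but assembling all coefficients by hand requires discipline and is most cleanly executed by tabulation or by the same automated expansion code used to solve the underlying SDP.
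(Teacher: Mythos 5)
Your proposal is correct and follows essentially the same route as the paper: its own proof sketch defines the identical weights $d_p(v)\in\{-3,2\}$ on ordered triangles, writes $\Delta_2(G)$ as the same normalized double sum $\frac{1}{5!\binom{n}{5}}\sum_p\sum_{v\ne w}d_p(v)d_p(w)$, and deduces asymptotic non-negativity from the square identity exactly as in Lemma \ref{lem41}. Like the paper, it leaves the expansion coefficients to a finite case-by-case computation, and your sample checks for $G_1$, $G_{11}$ and $G_{14}$ are consistent with the stated coefficients.
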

\begin{proof}[Sketch of proof]
Let $G=(V,E)$ be a graph on $n$ vertices. Define $\tau_2(G)=\{(x,y,z) \in V(G)^3: \{x,y\}, \{x,z\}, \{y,z\} \in E(G)\}$. Every triple $(x,y,z)\in \tau_2(G)$ induces a copy of of the type $\tau_2$ in $G$, where vertex $x$ is labelled ``1'', vertex $y$ is labelled ``2'' and vertex $z$ is labelled ``3''. Fix $p=(x,y,z) \in \tau_2(G)$.  Note that the flags $N_i$ for $1 \le i \le 4$ are those where the unlabeled vertex has at most one neighbour in the triangle $\tau_2$, while in the flags $N_i$ for $5 \le i \le 8$, the unlabeled vertex has at least two neighbours in $\tau_2$.  This motivates the definition
\[
d_p(v) \stackrel{def}{=} \left\{ \begin{array}{rl}
-3, & \text{if $v$ is connected to at most one vertex in $\{x,y,z\}$,} \\
2, & \text{otherwise,} \\
\end{array}\right.
\]
for each $v \in V(G)\setminus\{x, y, z\}$. The combinatorial interpretation of the lemma is
\[
\Delta_2(G)=\frac{1}{5! \binom{n}{5}} \left[ \sum_{p=(x,y,z)\in \tau_2(G)} \left(\sum_{\substack{v,w \not\in \{x,y,z\}\\v\ne w}} d_p(v) d_p(w)\right)\right] \ge o_{n\to\infty}(1).
\]
As in Lemma \ref{lem41}, this is easily seen to be asymptotically positive.  We omit the computation of $\Delta_2(G_i)$ for $i=1,2,\ldots, 14$, which can be performed as in the proof of the previous lemma.
\end{proof}

Finally we consider the \emph{dot} type.  Note that in this case the positive semi-definite matrix takes the form of a sum of three squares.

\begin{lemma} \label{lem43}
\begin{align*}
\Delta_3 &= \bigg[\bigg[(Z_1 - 2Z_2)^2+ \frac{1}{16}\cdot \left(6Z_2 - 7Z_3 + 8Z_4 - 6Z_5\right)^2+\frac{11}{80}\cdot \left(2Z_2 + 3Z_3 - 2Z_5\right)^2\bigg]\bigg]_{dot} \\
    &=\frac{1}{150}\left(204 G_1 - 118 G_2 +54G_3 + 60G_4 -17 G_5 + 42G_6 - 144 G_7 - 94 G_8+\right. \\
    & \qquad \qquad \left.2 G_9 -64 G_{10} + 160 G_{11} -258G_{12} -281G_{13}+420G_{14}\right) \ge 0.
\end{align*}
\end{lemma}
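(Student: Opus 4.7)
The plan is to follow the same template as the proofs of Lemmas \ref{lem41} and \ref{lem42}, with the minor adaptations that the labeled type is now \emph{dot} and that the positive semi-definite form appears as a sum of three rather than one squares. Accordingly, the argument splits into an easy non-negativity part (inherited from the sum-of-squares structure) and a routine but bulky expansion part.

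Set $g_1 = Z_1 - 2 Z_2$, $g_2 = 6 Z_2 - 7 Z_3 + 8 Z_4 - 6 Z_5$ and $g_3 = 2 Z_2 + 3 Z_3 - 2 Z_5$, so that by linearity of $[[\cdot]]_{dot}$ we have $\Delta_3 = [[g_1^2]]_{dot} + \tfrac{1}{16} [[g_2^2]]_{dot} + \tfrac{11}{80} [[g_3^2]]_{dot}$. For the non-negativity, I would, for each $i \in \{1,2,3\}$, define the following combinatorial weights on an admissible graph $G$ on $n$ vertices: for every $x \in V(G)$ (playing the role of the labeled vertex) and every pair $\{v,w\} \subseteq V(G) \setminus \{x\}$, let $d_x^{(i)}(v,w)$ be the coefficient in $g_i$ of the $dot$-flag induced by $\{x,v,w\}$ with $x$ labeled. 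Then, by the same double-counting applied in the previous two lemmas together with Fact \ref{small_error},
\[ [[g_i^2]]_{dot}(G) = \frac{1}{n \binom{n-1}{2}^2} \sum_{x \in V(G)} \left( \sum_{\{v,w\}} d_x^{(i)}(v,w) \right)^2 + O(1/n), \]
the main term of which is manifestly a sum of squares. Multiplying by the positive coefficients $1$, $\tfrac{1}{16}$, $\tfrac{11}{80}$ and summing then yields $\Delta_3(G) \ge o_{n \to \infty}(1)$.

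To verify the claimed expansion, I would compute the coefficient $\Delta_3(G_j)$ for each $j \in \{1, \dots, 14\}$ by iterating over the five choices of labeled vertex $x \in V(G_j)$ and the three unordered partitions of the four remaining vertices into pairs $\{v,w\}$ and $\{v',w'\}$, summing the contributions
\[ d_x^{(1)}(v,w)\, d_x^{(1)}(v',w') + \tfrac{1}{16} d_x^{(2)}(v,w)\, d_x^{(2)}(v',w') + \tfrac{11}{80} d_x^{(3)}(v,w)\, d_x^{(3)}(v',w'), \]
and dividing by the appropriate overall normalization to obtain a rational number with denominator dividing $150$. This is precisely the procedure illustrated for $\Delta_1(G_{10})$ in the proof of Lemma \ref{lem41}.

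The main obstacle is simply the size of this case analysis: fourteen graphs, five labelings per graph, three partitions per labeling and three quadratic forms per partition, producing on the order of several hundred elementary contributions to aggregate. Conceptually nothing new beyond the reasoning of the previous two lemmas is required, but keeping track of the signs and fractions is delicate, and in practice I would verify the fourteen coefficients in parallel against the numerical SDP output from which the triple $\bigl((g_1,1),(g_2,\tfrac{1}{16}),(g_3,\tfrac{11}{80})\bigr)$ was originally extracted. Once the coefficients match those in the statement, the identity together with the non-negativity established above completes the proof of the lemma.
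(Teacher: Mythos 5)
Your proposal is correct and follows essentially the same route as the paper, which omits the proof of this lemma precisely because it is the same combinatorial double-counting and graph-by-graph expansion used for Lemmas \ref{lem41} and \ref{lem42} (here with the \emph{dot} type, two unlabeled vertices per flag, and three squares weighted by $1$, $\tfrac{1}{16}$, $\tfrac{11}{80}$). Your non-negativity argument via Fact \ref{small_error} and the finite verification of the fourteen coefficients $\Delta_3(G_j)$ are exactly the intended proof.
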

\begin{proof}
We omit the proof, noting that the calculations involved are very similar to those in the previous lemmas.
\end{proof}

We are now in a position to combine the lemmas to obtain a bound on the minimum density of $4$-cliques in admissible graphs.  In what follows, $K_4$ represents the clique on four vertices, while $C_4$ denotes a cycle on four vertices.

\begin{thm}\label{thm44}
\begin{align*}
K_4 - 2 \Delta_1 - \frac{2}{25} \Delta_2 -\frac{1}{5} \Delta_3 &= \frac{3}{25} + \frac{1}{30} G_5 + \frac{2}{75} G_{10}
+\frac{24}{75} G_{12} + \frac{19}{150} G_{13} \\
&= \frac{3}{25} + \frac{1}{30} G_5+ \frac{2}{15} C_4 + \frac{4}{15} G_{12} + \frac{1}{10} G_{13}.
\end{align*}
\end{thm}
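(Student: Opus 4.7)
The plan is to verify the identity directly by expanding every term in the basis of admissible five-vertex graphs $\{G_1,\ldots,G_{14}\}$ and matching coefficients graph by graph. Lemmas~\ref{lem41}--\ref{lem43} already supply the size-five expansions of $\Delta_1, \Delta_2, \Delta_3$. The constant $\tfrac{3}{25}$ on the right-hand side should be read via $\sum_{i=1}^{14} G_i = 1$ in the size-five flag algebra, so it contributes $\tfrac{3}{25}$ to the coefficient of every $G_i$. All that remains is to expand $K_4$ at size five, and then to recognize the $C_4$ rewriting that converts the first form into the second.

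First I would apply the chain rule to obtain $K_4 = \sum_{i=1}^{14} p(K_4;G_i)\,G_i$, where $p(K_4;G_i)$ is the fraction of four-subsets of $V(G_i)$ inducing a clique. A direct inspection shows that $p(K_4;G_i)$ is nonzero only for $i\in\{1,3,4,6,14\}$: $G_1$, $G_3$, and $G_4$ each contain exactly one induced $K_4$ (so $p=\tfrac{1}{5}$), $G_6 = K_5\setminus e$ contains two (so $p=\tfrac{2}{5}$), and $G_{14}=K_5$ contains five (so $p=1$); every other admissible five-vertex graph has a non-edge in every four-subset.

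Next, for each $i\in [14]$, I would assemble the coefficient of $G_i$ on the left,
\[
p(K_4;G_i)\,-\,2\,\Delta_1(G_i)\,-\,\tfrac{2}{25}\,\Delta_2(G_i)\,-\,\tfrac{1}{5}\,\Delta_3(G_i),
\]
by reading off the requisite values from Lemmas~\ref{lem41}--\ref{lem43}, and compare with $\tfrac{3}{25}$ plus the extra amount prescribed on the right (nonzero only for $G_5, G_{10}, G_{12}, G_{13}$). This reduces the first equality to fourteen elementary arithmetic checks.

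For the second equality, I would again invoke the chain rule, this time applied to $C_4$: writing $C_4 = \sum_i p(C_4;G_i)\,G_i$, a graph-by-graph check shows that induced four-cycles appear only in $G_{10}, G_{12}, G_{13}$, with $p(C_4;G_{10})=\tfrac{1}{5}$, $p(C_4;G_{12})=\tfrac{2}{5}$, and $p(C_4;G_{13})=\tfrac{1}{5}$; hence $C_4 = \tfrac{1}{5}(G_{10} + 2G_{12} + G_{13})$. Substituting $\tfrac{2}{15}C_4$ and regrouping yields
\[
\tfrac{2}{75}G_{10} + \tfrac{24}{75}G_{12} + \tfrac{19}{150}G_{13} \;=\; \tfrac{2}{15}C_4 + \tfrac{4}{15}G_{12} + \tfrac{1}{10}G_{13},
\]
which converts the first form into the second. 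The main (and indeed only) obstacle is bookkeeping: one must enumerate induced $K_4$'s and $C_4$'s in each of the fourteen graphs and correctly read off the $\Delta_j(G_i)$ coefficients from Lemmas~\ref{lem41}--\ref{lem43} without arithmetic slips. No new ideas are required.
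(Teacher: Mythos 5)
Your proposal is correct and follows essentially the same route as the paper: expand $K_4 = \tfrac{1}{5}(G_1+G_3+G_4+2G_6+5G_{14})$ and $C_4=\tfrac{1}{5}(G_{10}+2G_{12}+G_{13})$ into the fourteen admissible size-five graphs, substitute the expansions of $\Delta_1,\Delta_2,\Delta_3$ from Lemmas \ref{lem41}--\ref{lem43}, use $\sum_i G_i = 1$ to absorb the constant $\tfrac{3}{25}$, and match coefficients. The induced-subgraph counts and the regrouping arithmetic you give all check out.
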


\begin{proof}
We first expand the graphs $K_4$ and $C_4$ into admissible graphs of size $5$.  A straightforward calculation gives $K_4 = \frac{1}{5} (G_1 + G_3 + G_4 + 2G_6 + 5 G_{14}),$ and $C_4 = \frac{1}{5} (G_{10} + 2G_{12} + G_{13}).$  Note that the density of graphs on $k$ vertices is measured with respect to $\binom{n}{k}$, and so the normalization factor of $\frac{1}{5}$ appears when expanding graphs on four vertices to graphs on five vertices.  Now we use Lemmas \ref{lem41}, \ref{lem42} and \ref{lem43} to expand $\Delta_1$, $\Delta_2$ and $\Delta_3$ into the graphs $G_i$.  Noting that $\sum_i G_i = 1$, we can replace $\frac{3}{25} \sum_i G_i$ with $\frac{3}{25}$, which results in the above theorem.
\end{proof}

We conclude this section by using the above theorem to deduce some structural information about extremal graphs.  Recall that $t_4(G)$ denotes the number of $4$-cliques in $G$, while for any graph $H$, $t_H(G)$ counts the number of induced copies of $H$ in $G$.

\begin{cor} \label{cor45}
Suppose $G$ is a graph on $n$ vertices with $t_4(G) = \left( \frac{3}{25} + o(1) \right) \binom{n}{4}$.  Then
\begin{itemize}
    \item[(i)] $t_{G_5}(G) = o(n^5)$,
    \item[(ii)] $t_{C_4}(G) = o(n^4)$, and
    \item[(iii)] all but $o(n)$ vertices of $G$ have degree $(\frac{3}{5} + o(1))n$.
\end{itemize}
\end{cor}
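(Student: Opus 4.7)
The plan is to first extract parts (i) and (ii) directly from the identity in Theorem \ref{thm44}, and then to derive (iii) by squeezing structural information out of $\Delta_3(G) = o(1)$. Rearranging Theorem \ref{thm44}, any admissible $G$ satisfies
\[
d(K_4;G) = \tfrac{3}{25} + \tfrac{1}{30} d(G_5;G) + \tfrac{2}{15} d(C_4;G) + \tfrac{4}{15} d(G_{12};G) + \tfrac{1}{10} d(G_{13};G) + 2\Delta_1(G) + \tfrac{2}{25}\Delta_2(G) + \tfrac{1}{5}\Delta_3(G) + o(1).
\]
Each $\Delta_i(G)$ is asymptotically non-negative by its sum-of-squares representation in Lemmas \ref{lem41}--\ref{lem43}, and the four subgraph densities are non-negative, so the hypothesis $d(K_4;G) = \tfrac{3}{25}+o(1)$ forces every term on the right beyond $\tfrac{3}{25}$ to be $o(1)$. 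Parts (i) and (ii) follow at once, and we also obtain $\Delta_3(G) = o(1)$, which will be essential for (iii).

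For (iii), I would exploit the three-square structure of $\Delta_3$ in Lemma \ref{lem43}: each of the three summands $[[(\cdot)^2]]_{dot}$ is separately asymptotically non-negative, so each equals $o(1)$ on $G$. Using the averaging interpretation from Section \ref{flag_intro} together with Fact \ref{small_error}, and writing $p_i(v) := p_{dot}(Z_i; G_v)$, this translates to
\[
\mathbb{E}_v[(p_1 - 2p_2)^2],\ \mathbb{E}_v[(6p_2 - 7p_3 + 8p_4 - 6p_5)^2],\ \mathbb{E}_v[(2p_2 + 3p_3 - 2p_5)^2] = o(1).
\]
By Markov's inequality, all but $o(n)$ vertices $v$ satisfy each of the three linear relations up to $o(1)$; combined with $\sum_i p_i(v) = 1$, solving yields $p_1 = 2p_2$, $p_4 = 2p_3$, $p_5 = p_2 + \tfrac{3}{2}p_3$, and $p_2 = \tfrac{1}{4} - \tfrac{9}{8}p_3$ (all modulo $o(1)$), leaving $p_3(v)$ as a free parameter.

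The crucial extra ingredient is the elementary identity $\binom{d_v}{2}/\binom{n-1}{2} = p_3(v) + p_5(v)$, because the $\binom{d_v}{2}$ pairs of neighbors of $v$ split into triangles with $v$ (counted by $p_5$) and induced $P_2$'s centered at $v$ (counted by $p_3$). Since $\binom{d_v}{2}/\binom{n-1}{2} = (d_v/(n-1))^2 + O(1/n)$, and the edge-density expansion from $\rho = \tfrac{1}{2}Z_1 + Z_3 + \tfrac{1}{2}Z_4 + Z_5$ (Equation \ref{edgeexpansion}) gives $d_v/(n-1) = \tfrac{1}{2}p_1 + p_3 + \tfrac{1}{2}p_4 + p_5 = \tfrac{1}{2} + \tfrac{5}{4}p_3 + o(1)$, equating the two expressions yields
\[
\bigl(\tfrac{1}{2} + \tfrac{5}{4}p_3(v)\bigr)^2 = \tfrac{1}{4} + \tfrac{11}{8}p_3(v) + o(1),\qquad \text{i.e.,}\qquad p_3(v)\bigl(25\,p_3(v) - 2\bigr) = o(1).
\]
Hence for all but $o(n)$ vertices, $p_3(v)$ lies in $\{0,\ \tfrac{2}{25}\} + o(1)$, giving a dichotomy $d_v/(n-1) = \tfrac{1}{2} + o(1)$ or $d_v/(n-1) = \tfrac{3}{5} + o(1)$.

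The main obstacle is ruling out the degree-$n/2$ branch. If a vertex $v$ satisfies $p_3(v), p_4(v) = o(1)$ and $p_2(v) = \tfrac{1}{4} + o(1)$, then $N(v)$ is an almost-clique of size $\approx n/2$, $\overline{N(v)} \setminus \{v\}$ is an almost-clique of size $\approx n/2$, and only $o(n^2)$ edges run between them. Hence $G$ lies within $o(n^2)$ edge edits of a disjoint union of two nearly-balanced cliques. Since each edit changes $t_4(G)$ by at most $O(n^2)$, this forces $d(K_4;G) = 2\binom{\lfloor n/2\rfloor}{4}/\binom{n}{4} + o(1) = \tfrac{1}{8} + o(1)$; but $\tfrac{1}{8} > \tfrac{3}{25}$, contradicting the hypothesis. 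Therefore no vertex lies in the degree-$n/2$ branch, so all but $o(n)$ vertices of $G$ have degree $(\tfrac{3}{5} + o(1))n$, proving (iii).
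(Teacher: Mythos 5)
Your proof is correct and follows essentially the same route as the paper: parts (i), (ii) and $\Delta_3(G)=o(1)$ are read off from Theorem \ref{thm44}, the three squares in $\Delta_3$ together with the normalization $\sum_i p_i=1$ give the same one-parameter family of density vectors, and the degree-$n/2$ branch is eliminated by showing it forces at least $\left(\frac{1}{8}+o(1)\right)\binom{n}{4}$ copies of $K_4$, contradicting $\frac{3}{25}$. The only (minor) difference is that you close the system with the product relation $\rho^2=Z_3+Z_5$, in the form $\binom{d_v}{2}/\binom{n-1}{2}=p_3(v)+p_5(v)$ combined with Equation (\ref{edgeexpansion}), instead of the paper's Equation (\ref{stability_equation}); both yield the same quadratic $p_3(25p_3-2)=o(1)$ and hence the same two candidate solutions.
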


\begin{proof}

Applying Theorem \ref{thm44} to $G$, we have
\[ d(K_4; G) - 2 \Delta_1 (G) - \frac{2}{25} \Delta_2 (G) - \frac{1}{5} \Delta_3(G) = \frac{3}{25} + \frac{1}{30} d(G_5; G) + \frac{2}{15} d(C_4; G) + \frac{4}{15} d(G_{12}; G) + \frac{1}{10} d(G_{13}; G). \]
In particular, using the asymptotic non-negativity of $\Delta_i(G)$, we have
\[ d(K_4; G) \ge \frac{3}{25} + \frac{1}{30} d(G_5; G) + \frac{2}{15} d(C_4; G) + \frac{1}{5} \Delta_3(G) + o(1). \]
Thus if $d(K_4; G) = \frac{3}{25} + o(1)$, we must have $d(G_5; G) = d(C_4; G) = \Delta_3(G) = o(1)$.  This immediately gives $t_{G_5}(G) = o(n^5)$ and $t_{C_4}(G) = o(n^4)$, and so it remains to justify (iii).  We have
\[ \Delta_3(G) = \bigg[ \bigg[ (Z_1 - 2 Z_2)^2 + \frac{1}{16} (6 Z_2 - 7 Z_3 + 8 Z_4 - 6 Z_5)^2 + \frac{11}{80} ( 2 Z_2 + 3 Z_3 - 2 Z_5)^2 \bigg] \bigg]_{dot} = o(1). \]

For every vertex $v$, let $F_v$ be the \emph{dot}-flag obtained from $G$ by labeling the vertex $v$ with $1$.  By definition of the averaging operator, $\Delta_3(G)$ is the average over vertices $v$ of the corresponding flag densities in $F_v$.  The expression is a sum of squares, and thus will be asymptotically non-negative.  Since the average is $o(1)$, the expression must be $o(1)$ for all but $o(n)$ vertices.  In particular, for these vertices we have
\begin{align*}
    p_{dot}(Z_1; F_v) - 2 p_{dot}(Z_2; F_v) &= o(1), \\
    6 p_{dot}(Z_2; F_v) - 7 p_{dot}(Z_3; F_v) + 8 p_{dot} (Z_4; F_v) - 6 p_{dot} (Z_5; F_v) &= o(1), \textrm{ and} \\
    2 p_{dot}(Z_2; F_v) + 3 p_{dot}(Z_3; F_v) - 2p_{dot}(Z_5; F_v) &= o(1).
\end{align*}

Since the sum of the flag densities must be $1$, we also have
\begin{align*}
    p_{dot}(Z_1; F_v) + p_{dot}(Z_2; F_v) + p_{dot}(Z_3; F_v) + p_{dot}(Z_4; F_v) + p_{dot}(Z_5; F_v) &= 1.
\end{align*}

Finally, recall from Equation (\ref{stability_equation})     in Section \ref{flag_intro} that $4 Z_2 \cdot ( Z_3 + Z_5 ) - (Z_4 + Z_1)^2 = 0$.  Applying this to $F_v$, we have
\begin{align*}
    4p_{dot}(Z_2; F_v) \left( p_{dot}(Z_3; F_v) + p_{dot}(Z_5; F_v) \right) - \left( p_{dot}(Z_4; F_v) + p_{dot}(Z_1; F_v) \right)^2 &= o(1).
\end{align*}

This gives us a system of five equations in the five variables
$p_{dot}(Z_i; F_v)$.  The first four equations form a linear system
of full rank, which we can use to express all the variables in terms
of $p_{dot}(Z_5; F_v)$. Substituting these terms into the fifth
equation gives a quadratic equation in $p_{dot}(Z_5; F_v)$, which
results in two solutions, namely $ \left( p_{dot}(Z_i; F_v)
\right)_{i=1}^5 = \left( \frac{8}{25}, \frac{4}{25}, \frac{2}{25},
\frac{4}{25}, \frac{7}{25} \right) + o(1)$ or $\left( \frac{1}{2},
\frac{1}{4}, 0, 0, \frac{1}{4} \right) + o(1)$.

We now show that the second solution implies a large number of $4$-cliques.  Indeed, suppose $v \in V$ was a vertex with $  \left( p_{dot}(Z_i; F_v) \right)_{i=1}^5 = \left( \frac{1}{2}, \frac{1}{4}, 0, 0, \frac{1}{4} \right) + o(1)$.  Recall from Equation (\ref{edgeexpansion}) in Section 3 we have $\rho = \frac{1}{2}Z_1 + Z_3 + \frac{1}{2} Z_4 + Z_5$, where $\rho$ is the \emph{dot}-flag of size 2 corresponding to an edge.  Applying this to the flag $F_v$, we deduce that the degree of $v$ is $\left( \frac{1}{2} \cdot \frac{1}{2} + \frac{1}{4} + o(1) \right) n = \frac{1}{2} n + o(n) $.  Thus there are $\frac{1}{2} n + o(n) $ vertices $v$ is not adjacent to, and since $G$ is $\overline{K_3}$-free, these vertices must form a clique.  This clique contains $\binom{ \frac{1}{2}n + o(n)}{4} \sim \frac{1}{16} \binom{n}{4}$ $4$-cliques.  Consider now the neighborhood of $v$.  Since $p_{dot}(Z_3; F_v) = o(1)$, it follows that the neighborhood is missing at most $o(n^2)$ edges.  Hence the number of $4$-cliques in the neighborhood of $v$ is $\binom{ \frac{1}{2} n + o(n) }{4} - o(n^4) \sim \frac{1}{16} \binom{n}{4}$.  Thus we have $t_4(G) \ge \left( \frac{1}{8} + o(1) \right) \binom{n}{4}$, which contradicts our assumption that $t_4(G) = \left( \frac{3}{25} + o(1) \right) \binom{n}{4}$.

Hence for almost all vertices $v$, we have $( p_{dot}(Z_i; F_v) )_{i=1}^5 = \left( \frac{8}{25}, \frac{4}{25}, \frac{2}{25}, \frac{4}{25}, \frac{7}{25} \right) + o(1)$.  Applying Equation (\ref{edgeexpansion}), we deduce that the degree of $v$ is $\left( \frac{1}{2} \cdot \frac{8}{25} + \frac{2}{25} + \frac{1}{2} \cdot \frac{4}{25} + \frac{7}{25} + o(1) \right) n = \left( \frac{3}{5} n + o(1)\right) n$, as claimed.

\end{proof}

\subsection{The stability analysis} \label{c5stab}

We will now use the results of the preceding section to show that, for sufficiently large $n$, a blow-up of $C_5$ is the unique extremal graph for the $(4,3)$-problem.  Recall that in a blow-up, we replace every vertex with a clique, and every edge with a complete bipartite graph.  Hence a blow-up of $C_5$ consists of five disjoint sets of vertices $V_i$, with $V_i \cup V_{i+1}$ a clique for all $1 \le i \le 5$, and no edges between $V_i$ and $V_{i+2}$ for all $1 \le i \le 5$ (throughout this section, indices will be taken modulo 5).

Suppose $G$ is a $\overline{K_3}$-free graph on $n$ vertices with
the minimal number of $4$-cliques.  Our proof consists of three
steps.  We first use the results of Corollary \ref{cor45} to deduce
that $G$ is close to being a blow-up of $C_5$ (note that this holds
not only for an extremal graph, but for any family of graphs that is
asymptotically optimal).  In the second step we use the minimality
of $G$ to show that $G$ must in fact be a blow-up of $C_5$ with
asymptotically equal parts.  Finally, we solve an integer
optimization problem to determine the size of the parts of $G$
exactly.

\medskip

Recall that from Corollary \ref{cor45}, we have that if $n$ is sufficiently large, and $G$ is an extremal graph on $n$ vertices, then $t_4(G) = \frac{3}{25} \binom{n}{4} + o(n^4)$, $t_{C_4}(G) = o(n^4)$, $t_{G_5}(G) = o(n^5)$, and all but $o(n)$ vertices of $G$ have degree $ \frac{3}{5} n + o(n)$.  From this we shall deduce that $G$ is almost a blow-up of $C_5$.  To this end, we introduce some definitions.  Given subsets $A, B \subset V(G)$, we say $A$ is an \emph{almost clique} if all but $o(n^2)$ pairs in $A$ are adjacent, and we say $(A,B)$ is \emph{almost complete} (\emph{almost empty}) if all but $o(n^2)$ pairs in $A \times B$ are adjacent (nonadjacent).  Finally, we define a triple $\{ a, b, c \} \in V(G)$ to be \emph{typical} if:
\begin{itemize}
    \item[(i)] $\{a , b \} \notin E(G)$, $c \in N(a) \cap N(b)$, $d(a), d(b), d(c) = \frac{3}{5}n + o(n)$,
    \item[(ii)] $\{a , b\}$ is contained in $o(n^2)$ copies of $C_4$,
    \item[(iii)] $\{ a, b, c \}$ is contained in $o(n)$ copies of $C_4$, and
    \item[(iv)] $\{a, b, c \}$ is contained in $o(n^2)$ copies of $G_5$.
\end{itemize}

Note that $G[\{a,b,c\}]$ is an induced path of length $2$.  As all but $o(n)$ vertices are of degree $\frac{3}{5}n + o(n)$, it is easy to see that there are $\Omega(n^3)$ induced paths of length $2$ in $G$.  As Corollary \ref{cor45} asserts that $t_{C_4}(G) = o(n^4)$ and $t_{G_5}(G) = o(n^5)$, it follows that almost all induced paths of length $2$ are typical.  We will now use the neighborhoods of $\{a , b, c \}$ to define the parts corresponding to the blow-up of $C_5$.  In particular, we define
\begin{align*}
    &V_1 = N(a) \cap N(b), V_2 = \{a\} \cup \left( N(a) \cap \overline{N(b)} \cap N(c) \right), V_3 = N(a) \cap \overline{N(b)} \cap \overline{N(c)}, \\
    &V_4 = \overline{N(a)} \cap N(b) \cap \overline{N(c)}, \textrm{ and } V_5 = \{ b \} \cup \left( \overline{N(a)} \cap N(b) \cap N(c) \right).
\end{align*}

We now make some preliminary observations about the sets $V_i$.
Clearly, by definition, the sets are disjoint.  Moreover, since
$\alpha(G) \le 2$, and $\{ a, b \} \notin E(G)$, we must have $N(a)
\cup N(b) = V(G) \setminus \{ a, b \}$, and so $\cup_i V_i = V(G)$.
Similarly, for any vertex $v \in V(G)$, $\overline{N(v)}$ must
induce a clique, as any non-edge in $\overline{N(v)}$ forms an
independent set of size three with $v$.  Thus $V_2 \cup V_3$, $V_3
\cup V_4$, and $V_4 \cup V_5$ are (actual) cliques.  Finally, note
that if $u, v \in V_1$ are such that $\{ u, v \} \notin E(G)$, then
the set $\{ a, b, u, v \}$ induces a copy of $C_4$.  Since $\{a , b,
c \}$ was chosen to be a typical triple, properties (ii) and (iii)
imply that $V_1$ is an almost clique, and $c$ is adjacent to all but
$o(n)$ vertices in $V_1$.

We can also obtain some relations regarding the sizes of these parts.  By property (i) of typical triples, we have $d(a), d(b), d(c) = \frac{3}{5} n + o(n)$.  Since $N(a) \cup N(b) = V(G) \setminus \{a, b \}$, we have $|V_1| = |N(a) \cap N(b)| = |N(a)| + |N(b)| - |N(a) \cup N(b)| = \frac{1}{5} n + o(n)$.  Moreover, as $N(a) \cup \{a \} = V_1 \cup V_2 \cup V_3$, $N(b) \cup \{ b \} = V_1 \cup V_4 \cup V_5$, $\overline{N(c)} \setminus V_1 = V_3 \cup V_4$, and $c$ has $o(n)$ non-neighbors in $V_1$, we deduce
\[ |V_2| + |V_3| = \frac{2}{5} n + o(n), |V_3| + |V_4| = \frac{2}{5} n + o(n), \textrm{ and } |V_4| + |V_5| = \frac{2}{5}n + o(n), \]
which also imply $|V_2| + |V_5| = \frac{2}{5} n + o(n)$.

\medskip

We are beginning to uncover the approximate $C_5$-blow-up structure of $G$.  Recall that we have shown that $V_2 \cup V_3$, $V_3 \cup V_4$ and $V_4 \cup V_5$ are cliques, while $V_1$ is an almost clique.  We will establish the relations between the remaining parts by showing:
\begin{itemize}
    \item $(V_i, V_{i+2})$ is almost empty for any $1 \le i \le 5$, and
    \item $(V_1, V_2)$ and $(V_1, V_5)$ are almost complete.
\end{itemize}
We start by showing that $(V_1, V_3)$ is almost empty.  For any $u \in V_1 \cap N(c)$ and $v \in V_3$, if $\{ u, v \} \in E(G)$, then the set $\{ a, b, c, u, v \}$ induces a copy of $G_5$.  As $\{ a, b, c \}$ is a typical triple, property (iv) implies that there are at most $o(n^2)$ copies of $G_5$ containing $\{ a, b, c \}$, and so there are at most $o(n^2)$ edges between $V_1 \cap N(c)$ and $V_3$.  Since $c$ is adjacent to all but $o(n)$ vertices in $V_1$, this shows that $(V_1, V_3)$ is almost empty.  By the symmetry between $a$ and $b$ (and hence $V_3$ and $V_4$), it follows that $(V_1, V_4)$ is also almost empty.

Now consider the vertices in $V_1$.  By Corollary \ref{cor45}, all but $o(n)$ of these vertices have degree $\frac{3}{5}n + o(n)$.  Since $(V_1, V_3 \cup V_4)$ is almost empty, it follows that all but $o(n)$ vertices in $V_1$ have $o(n)$ edges to $V_3 \cup V_4$.  Hence, since $|V_1| + |V_2| + |V_5| = \frac{3}{5}n + o(n)$, it follows that $V_1$ is almost complete to $V_1 \cup V_2 \cup V_5$.  In particular, $(V_1, V_2)$ and $(V_1, V_5)$ are almost complete.

Next consider the vertices in $V_2$.  We have established that $(V_2, V_1 \cup V_2 \cup V_3)$ is almost complete.  Once again, using the restriction on the degrees, and the fact that $|V_1| + |V_2| + |V_3| = \frac{3}{5}n + o(n)$, we deduce that $(V_2, V_4)$ and $(V_2, V_5)$ are almost empty.  Symmetry implies $(V_5, V_2)$ and $(V_5, V_3)$ are almost empty as well, as claimed.

\medskip

At this point we have determined the global structure of $G$, in which each part $V_i$ corresponds approximately to the blow-up of a vertex in $C_5$.  We now wish to show that $G$ is an exact blow-up of $C_5$, with parts of size $\frac{1}{5}n + o(n)$.

\medskip

In order to do so, we shall require greater control over the adjacency of individual vertices, and not just the parts $V_i$.  With this in mind, for each $1 \le i \le 5$, we define a vertex $v \in V_i$ to be \emph{bad} if $v$ has $\Omega(n)$ non-neighbors in $V_{i-1} \cup V_i \cup V_{i+1}$ or $\Omega(n)$ neighbors in $V_{i+2} \cup V_{i+3}$.  Since for each $i$ we have that $V_i \cup V_{i+1}$ is an almost clique and $(V_i, V_{i+2})$ is almost empty, it follows that there are $o(n)$ bad vertices.  We clean up the partition of $V(G)$ by removing bad vertices from each $V_i$ and placing them in a set $U$.  This results in a partition $V(G) = V_1 \cup \hdots \cup V_5 \cup U$ satisfying:
\begin{itemize}
    \item[(1)] for any $1 \le i \le 5$ and vertex $v \in V_i$, $v$ is adjacent to all but $o(n)$ vertices in $V_{i-1} \cup V_i \cup V_{i+1}$, and $v$ is not adjacent to all but $o(n)$ vertices in $V_{i+2} \cup V_{i+3}$, and
    \item[(2)] $V_2 \cup V_3$, $V_3 \cup V_4$, $V_4 \cup V_5$ are cliques, and
    \item[(3)] $|V_1| = \frac{1}{5}n + o(n)$, $|V_2 \cup V_3|, |V_3 \cup V_4|, |V_4 \cup V_5| = \frac{2}{5} n + o(n)$, and $|U| = o(n)$.
\end{itemize}

The following proposition asserts that in an asymptotically optimal graph, the above conditions imply that the almost cliques are, in fact, true cliques, and that the parts are asymptotically equal.  This will in turn allow us to completely determine the structure of extremal graphs.

\begin{prop}\label{c5cliques}
If $V_1, V_2, \hdots, V_5$ satisfy $(1), (2)$ and $(3)$, then for any $1 \le i \le 5$, $V_i \cup V_{i+1}$ is a clique, and $|V_i| = \frac{1}{5}n + o(n)$.
\end{prop}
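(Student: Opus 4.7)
The plan is to split the argument into two essentially independent parts: first the clique property for each $V_i\cup V_{i+1}$, which uses only conditions (1), (3), and $\alpha(G)\le 2$ (no extremality needed); and then the part-size balance, which follows from the extremality of $G$ via a convexity comparison with the pure blow-up on the same parts.

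\emph{Clique property.} Since (2) already covers $i=2,3,4$, it suffices to show that $V_5\cup V_1$ and $V_1\cup V_2$ are cliques. Suppose, for a contradiction, that $\{u,v\}$ is a non-edge with both endpoints in $V_1\cup V_2$. By condition (1), each of $u$ and $v$ has $o(n)$ neighbors in $V_4$ (since $V_4\subseteq V_{i+2}\cup V_{i+3}$ both for $i=1$ and $i=2$), whereas $|V_4|=n/5+o(n)$ by (3); hence $V_4\setminus(N(u)\cup N(v))$ is nonempty for large $n$, and any vertex $w$ in it forms an independent triple $\{u,v,w\}$, contradicting $\alpha(G)\le 2$. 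The case $u,v\in V_5\cup V_1$ is symmetric, using $V_3$ in place of $V_4$.

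\emph{Part sizes.} Writing $n_i=|V_i|$, the constraints in (3) force $n_2-n_4,\,n_3-n_5=o(n)$, so the deviations from the balanced configuration are controlled, up to $o(n)$, by a single parameter $t:=n_2-n_3$; it thus suffices to show $t=o(n)$. Using the clique structure established above, let $\tilde G$ be the pure $C_5$-blow-up on $V_1\cup\cdots\cup V_5$ with each $V_i\cup V_{i+1}$ as a clique and no other edges. Every edge of $\tilde G$ is an edge of $G$, so $t_4(G)\ge t_4(\tilde G)$, and a direct expansion gives
\[
t_4(\tilde G)=\sum_{i=1}^5\binom{n_i+n_{i+1}}{4}-\sum_{i=1}^5\binom{n_i}{4}.
\]
A short Taylor expansion of this functional around the balanced point, subject to the (3) constraints, yields $t_4(\tilde G)\ge T^{**}(n-|U|)+c\,n^2 t^2$ for an absolute constant $c>0$, where $T^{**}(m)$ denotes the minimum of the same functional over $C_5$-blow-ups on $m$ vertices.

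On the other hand, by the extremality of $G$ we have $t_4(G)\le t_4(G^*)=T^{**}(n)$ for any balanced $C_5$-blow-up $G^*$ on $n$ vertices, and since $T^{**}(m)=\tfrac{3}{25}\binom{m}{4}+O(m^3)$ is Lipschitz, $T^{**}(n)=T^{**}(n-|U|)+o(n^4)$ using $|U|=o(n)$. Chaining these inequalities gives $c\,n^2 t^2\le o(n^4)$, hence $t=o(n)$; combined with (3) this yields $|V_i|=n/5+o(n)$ for every $i$. The main technical point is extracting the quadratic lower bound for $t_4(\tilde G)$, which amounts to computing the Hessian of the blow-up $K_4$-count along the one-parameter family of perturbations allowed by (3) and checking its positivity; once this is in hand the remaining steps are routine.
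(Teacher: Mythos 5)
There is a genuine gap, and it sits in your first step. You claim the clique property needs ``only (1), (3), and $\alpha(G)\le 2$'' and justify the key step by ``$|V_4|=n/5+o(n)$ by (3)''. But condition (3) does not say this: it only fixes $|V_1|=\frac15 n+o(n)$ and the \emph{pair} sums $|V_2\cup V_3|,|V_3\cup V_4|,|V_4\cup V_5|=\frac25 n+o(n)$. The individual sizes of $V_2,\dots,V_5$ are part of the conclusion you are trying to prove, and a priori $|V_4|$ could be $o(n)$, in which case a non-edge between $V_1$ and $V_2$ produces no independent triple through $V_4$ and your argument collapses. This degenerate configuration is exactly what the paper has to rule out, and it cannot be ruled out without extremality: a graph consisting of two disjoint cliques of sizes roughly $\frac35 n$ and $\frac25 n$ (with, say, one vertex of the large clique placed in $V_2$ and detached from a single vertex of $V_1$) satisfies (1), (2), (3) and $\alpha\le 2$, yet has $|V_4|=0$ and $V_1\cup V_2$ not a clique. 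The paper kills this case by counting: two cliques of sizes $\frac25 n$ and $\frac35 n$ already force $t_4\ge\bigl(\frac{97}{625}+o(1)\bigr)\binom n4>\frac3{25}\binom n4$, contradicting asymptotic optimality. Your proposal is therefore circular as written: the clique step silently assumes the size conclusion, while your size step explicitly invokes ``the clique structure established above''.

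The second half has the right flavour but is also only asserted: the inequality $t_4(\tilde G)\ge T^{**}(n-|U|)+c\,n^2t^2$ is stated as ``a short Taylor expansion'', yet under (3) alone the deviation $t$ may be as large as $\approx\frac25 n$, so the expansion must be justified over the whole constraint range, not just near the balanced point (the computation does work out --- the quadratic term $\sim 24(n/5)^2(t/2)^2$ dominates the quartic correction --- but this is the crux and needs to be done). Note that your overall scheme could be repaired by reordering: run the size/convexity comparison first, using only that each $V_i\cup V_{i+1}$ misses $o(n^2)$ edges (which follows from (1)), so that $t_4(G)\ge t_4(\tilde G)-o(n^4)$; once $|V_i|=\frac15 n+o(n)$ is known, your independent-triple argument does give the cliques from (1) and $\alpha\le 2$. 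That would be a genuinely different route from the paper, which instead deduces the cliques via the two-large-cliques counting argument and gets the sizes from the degree information in Corollary \ref{cor45}. As submitted, however, the proof has a real gap.
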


\begin{proof}
We already know from $(2)$ that many of the pairs of neighboring parts are cliques.  It remains to show that $V_1 \cup V_2$ and $V_5 \cup V_1$ are both cliques.  We first show that $V_1$ is a clique.  Suppose for contradiction that there are nonadjacent vertices $u, v \in V_1$.  Since $\alpha(G) \le 2$, we must have $V_3 \cup V_4 \subset N(u) \cup N(v)$.  By $(3)$ we have $|V_3 \cup V_4| = \frac{2}{5}n + o(n)$, and so either $u$ or $v$ must have at least $\frac{1}{5} n + o(n)$ neighbors in $V_3 \cup V_4$.  However, this contradicts $(1)$.  Thus $V_1$ is a clique.

We now claim that if $(V_1, V_2)$ is not complete, we must have $|V_4|= o(n)$.  Indeed, suppose $u \in V_1$ and $v \in V_2$ are not adjacent.  Since $\alpha(G) \le 2$, we must have $V_4 \subset N(u) \cup N(v)$.  By $(1)$, both $u$ and $v$ have $o(n)$ neighbors in $V_4$, which implies $|V_4| = o(n)$.  By symmetry, if $(V_1, V_5)$ is not complete, we must have $|V_3| = o(n)$.

Suppose now that one of these sets, say $V_4$, is of size $o(n)$.  Using $(3)$, we must have $|V_3| = |V_5| = \frac{2}{5}n + o(n)$, and $|V_2| = o(n)$.  Since $|V_3| \ne o(n)$, it follows that $(V_1, V_5)$ is complete.  Thus $G$ has two large disjoint cliques: $V_3$ of size $\frac{2}{5} n + o(n)$, and $V_1 \cup V_5$ of size $\frac{3}{5} n + o(n)$.  This gives
\[ t_4(G) \ge \binom{\frac{2}{5}n + o(n)}{4} + \binom{ \frac{3}{5}n + o(n)}{4} \sim \frac{97}{625} \binom{n}{4} + o(n^4) > \frac{3}{25} \binom{n}{4}, \]
contradicting the asymptotic optimality of $G$.  Hence $(V_1, V_2)$ and $(V_1, V_5)$ must be complete, which implies that $V_1 \cup V_2$ and $V_1 \cup V_5$ are cliques.

Finally, we show that all parts have size $\frac{1}{5} n + o(n)$.  Recall we already have $|V_1| = \frac{1}{5} n + o(n)$.  Since $|V_3| + |V_4| = \frac{2}{5}n + o(n)$, we may by symmetry assume $|V_3| \ge \frac{1}{5} n + o(n)$.  Corollary \ref{cor45} implies there is some vertex of $V_3$ whose degree is $\frac{3}{5}n + o(n)$.  By $(1)$, this implies $|V_2| + |V_3| + |V_4| = \frac{3}{5} n + o(n)$.  As $|V_3| + |V_4| = \frac{2}{5}n + o(n)$, this implies $|V_2| = \frac{1}{5} n + o(n)$.  Combined with the equations in $(3)$, this gives $|V_i| = \frac{1}{5}n + o(n)$ for all $2 \le i \le 5$.
\end{proof}

We now turn our attention to the set $U$ of bad vertices.  In particular, we will show that in an extremal graph, each $u \in U$ can be reintroduced into some part $V_i$ in a way that is consistent with $(1)$ and Proposition \ref{c5cliques}.  Since $|U| = o(n)$, we can repeat this process without affecting $(1)$ or Proposition \ref{c5cliques}, and thus we can eliminate the set $U$.

\begin{prop} \label{c5nobad}
For every $u \in U$, there is some $i = i(u)$ such that $V_{i-1} \cup V_i \cup V_{i+1} \subset N(u)$, and $u$ has $o(n)$ neighbors in $V_{i+2} \cup V_{i+3}$.
\end{prop}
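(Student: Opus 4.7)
The plan is to analyze $\overline{N(u)}$ using the $\overline{K_3}$-freeness of $G$ together with a local modification argument that exploits the minimality of $t_4(G)$. Since $\alpha(G)\le 2$, the set $\overline{N(u)}$ must form a clique. Writing $B_k=\overline{N(u)}\cap V_k$ and $b_k=|B_k|$, the pair $(B_j,B_{j'})$ must be complete bipartite in $G$ for every $j\ne j'$. On the other hand, property~(1) implies that between any two non-consecutive parts $V_j,V_{j'}$ in the $C_5$-structure there are only $o(n^2)$ edges, so $b_jb_{j'}=o(n^2)$, forcing at most one of $b_j,b_{j'}$ to be $\Omega(n)$. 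Consequently the set $S:=\{k : b_k=\Omega(n)\}$ has size at most two, and if $|S|=2$ the two indices are consecutive in $C_5$.

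For the modification step, for each $i$ I define a graph $G'_i$ by replacing $u$'s edges so that $u$ is adjacent to $V_{i-1}\cup V_i\cup V_{i+1}\cup(U\setminus\{u\})$ and non-adjacent to $V_{i+2}\cup V_{i+3}$. Then the non-neighbours of $u$ in $G'_i$ form the clique $V_{i+2}\cup V_{i+3}$, so $G'_i$ remains $\overline{K_3}$-free. Since only edges incident to $u$ have been modified, $t_4(G'_i)-t_4(G)=t'_i(u)-t(u)$, where $t(u)$ and $t'_i(u)$ denote the numbers of $4$-cliques through $u$ in $G$ and $G'_i$ respectively; the minimality of $t_4(G)$ then gives $t(u)\le t'_i(u)$ for every choice of $i$. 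Writing $d_k=|N(u)\cap V_k|$ and $m=n/5$, and using Proposition~\ref{c5cliques} to note that all but $o(n^3)$ triangles of $G$ lie inside some clique $V_k\cup V_{k+1}$ (the error absorbing both the sparse non-consecutive edges and the $o(n)$ bad vertices), one obtains
\[
t(u)=\sum_{k=1}^{5}\binom{d_k+d_{k+1}}{3}-\sum_{k=1}^{5}\binom{d_k}{3}+o(n^3),
\]
together with the analogous formula for $t'_i(u)$ in which $d'_k=m$ for $k\in\{i-1,i,i+1\}$ and $d'_k=0$ otherwise (indices mod $5$).

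I then rule out the cases $S=\emptyset$ and $|S|=1$. If $S=\emptyset$ then $d_k=m+o(n)$ for every $k$, and a direct comparison yields $t(u)-t'_i(u)=3\binom{2m}{3}-4\binom{m}{3}+o(n^3)=\Omega(n^3)>0$, contradicting $t(u)\le t'_i(u)$. If $S=\{j_0\}$, taking $i=j_0-2$ the difference $t(u)-t'_i(u)$ simplifies to $\binom{2m}{3}+2\binom{2m-b_{j_0}}{3}-3\binom{m}{3}-\binom{m-b_{j_0}}{3}+o(n^3)$; this is decreasing in $b_{j_0}\in[0,m]$ and its minimum value $\binom{2m}{3}-\binom{m}{3}+o(n^3)$ (attained at $b_{j_0}=m$) is still $\Omega(n^3)>0$, again a contradiction. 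Hence $S=\{j_0,j_0+1\}$ for some $j_0$; I set $i=j_0-2$, so that $V_{i-1}\cup V_i\cup V_{i+1}=V_{j_0+2}\cup V_{j_0-2}\cup V_{j_0-1}$ and the two candidate non-neighbour parts are $V_{j_0}\cup V_{j_0+1}$.

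It remains to verify the two assertions of the proposition for this $i$. For every $k\in\{j_0+2,j_0-2,j_0-1\}$ at least one of $V_{j_0},V_{j_0+1}$ is non-consecutive to $V_k$ in $C_5$; if $b_k\ge 1$, any $v\in B_k$ would be forced by the clique property of $\overline{N(u)}$ to be adjacent to all $\Omega(n)$ vertices of the corresponding $B_{j_0}$ or $B_{j_0+1}$, violating property~(1). Hence $b_k=0$, so $V_{i-1}\cup V_i\cup V_{i+1}\subset N(u)$. With these equalities the identity for $t(u)-t'_i(u)$ collapses to
\[
\Bigl[\binom{m+d_{j_0}}{3}-\binom{m}{3}-\binom{d_{j_0}}{3}\Bigr]+\Bigl[\binom{m+d_{j_0+1}}{3}-\binom{m}{3}-\binom{d_{j_0+1}}{3}\Bigr]+\binom{d_{j_0}+d_{j_0+1}}{3}+o(n^3),
\]
and each bracketed term equals $\binom{m}{2}d+m\binom{d}{2}$ and is $\Omega(n^3)$ as soon as the corresponding $d$ is $\Omega(n)$. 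Combined with $t(u)\le t'_i(u)$, this forces $d_{j_0}+d_{j_0+1}=o(n)$, which is precisely the required bound on $u$'s neighbours in $V_{i+2}\cup V_{i+3}$. The main obstacle is the careful bookkeeping of the counting identity: isolating the dominant clique contribution from the $o(n^3)$ errors coming from non-consecutive edges and from vertices in $U$, and verifying that the modification $G'_i$ truly changes only the $4$-cliques incident to $u$.
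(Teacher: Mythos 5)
Your proof is correct, and it rests on the same core mechanism as the paper's: compare the extremal $G$ with the graph in which $u$ is rewired so that its neighbourhood becomes $V_{i-1}\cup V_i\cup V_{i+1}\cup(U\setminus\{u\})$ (its non-neighbourhood is then the clique $V_{i+2}\cup V_{i+3}$, so $\alpha\le 2$ is preserved), and invoke minimality of $t_4(G)$. The execution, though, is genuinely different. The paper argues softly in two separate switches: a one-line claim (if $u$ misses a vertex $v\in V_j$ then, since $\alpha(G)\le 2$ forces $V_{j+2}\cup V_{j+3}\subset N(u)\cup N(v)$ and $v$ sees only $o(n)$ of these parts, $u$ sees almost all of them) shows that otherwise $u$ is adjacent to almost all of four consecutive parts, after which deleting $u$'s $\Theta(n)$ neighbours in the single clique $V_{i+2}$ destroys $\Omega(n^3)$ four-cliques while only $o(n)$ added edges create $o(n^3)$; the second assertion is an identical deletion argument. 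You instead classify the parts by whether $\overline{N(u)}\cap V_k$ is large, exploiting that $\overline{N(u)}$ is a clique while non-consecutive parts span only $o(n^2)$ edges, and then run a single explicit comparison $t(u)\le t'_i(u)$ with clique-counting formulas exact up to $o(n^3)$; in particular you obtain the exact containment $V_{i-1}\cup V_i\cup V_{i+1}\subset N(u)$ directly from the clique structure of $\overline{N(u)}$ together with property (1), rather than from a modification. Your route carries more bookkeeping (the $o(n^3)$ error accounting, the three-case analysis on $S$, the monotonicity check in the $|S|=1$ case — all of which I verified), and shares the paper's informal use of $o(\cdot)$ and $\Omega(\cdot)$ for a single graph, but it buys a unified derivation of both conclusions from one inequality and quantitative control of $t(u)$; the paper's version is shorter and avoids any explicit counting.
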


\begin{proof}
Fix $u \in U$.  We begin with a simple claim.  For any $1 \le j \le 5$, if there is some $v \in V_j$ such that $u$ is not adjacent to $v$, then $u$ is adjacent to all but $o(n)$ vertices in $V_{j+2} \cup V_{j+3}$.  Indeed, as $\alpha(G) \le 2$, we must have $V_{j+2} \cup V_{j+3} \subset N(u) \cup N(v)$.  However, $v$ is adjacent to $o(n)$ vertices in $V_{j+2} \cup V_{j+3}$, and so the claim follows.

\medskip

Now suppose there is no $i$ such that $V_{i-1} \cup V_i \cup V_{i+1} \subset N(u)$.  This implies there is an $i$ such that $u$ is not adjacent to some vertices in both $V_{i-3}$ and $V_{i-1}$.  Applying the previous claim, it follows that $u$ is adjacent to all but $o(n)$ vertices in $V_{i-1} \cup V_i \cup V_{i+1} \cup V_{i+2}$.

In this case, remove all edges between $u$ and $V_{i+2}$, and add any missing edges between $u$ and $V_{i-1} \cup V_i \cup V_{i+1} \cup U$.  It is easy to see that we still have $\alpha(G) \le 2$.  As $u$ had $\frac{1}{5}n + o(n)$ neighbors in $V_{i+2}$, which is a clique, we have removed at least $\binom{ \frac{1}{5}n + o(n) }{3} = \Omega(n^3)$ $4$-cliques.  On the other hand, we have only added $o(n)$ edges, and so created $o(n^3)$ new $4$-cliques.  Thus we have reduced the number of $4$-cliques, which contradicts the extremality of $G$.

\medskip

Thus there must be some $i = i(u)$ such that $V_{i-1} \cup V_i \cup V_{i+1} \subset N(u)$.  It remains to show that $u$ has $o(n)$ neighbors in $V_{i+2} \cup V_{i+3}$.  Suppose for contradiction that $u$ has $\Omega(n)$ neighbors in $V_{i+2} \cup V_{i+3}$.  As $V_{i+2} \cup V_{i+3}$ is a clique, these neighbors form $\Omega(n^3)$ $4$-cliques with $u$.  Instead, we could remove all edges between $u$ and $V_{i+2} \cup V_{i+3}$.  To prevent the formation of an independent set of size $3$, we add all edges between $u$ and $U$.  This introduces $o(n)$ new edges, and thus $o(n^3)$ new $4$-cliques, while maintaining $\alpha(G) \le 2$.  Thus the number of $4$-cliques is reduced, again contradicting the minimality of $G$.  This completes the proof.

\end{proof}

Given any $u \in U$, we can apply Proposition \ref{c5nobad} to add $u$ to $V_{i(u)}$.  Repeat this process until $U$ is empty.  In this case we have a partition $V(G) = V_1 \cup \hdots \cup V_5$ such that for every $1 \le i \le 5$, $|V_i| = \frac{1}{5}n + o(n)$ and $V_i \cup V_{i+1}$ is a clique.

In order to conclude that $G$ is a blow-up of $C_5$, it remains to show that there are no edges between $V_{i-1}$ and $V_{i+1}$ for any $i$.  Suppose to the contrary there is an edge between some $v \in V_{i-1}$ and $w \in V_{i+1}$.  Note that when $n$ is large, we must have $|V_i| = \frac{1}{5} n + o(n) \ge 2$.  For any $x, y \in V_i$, $\{v, w, x, y\}$ is a $4$-clique.  Thus removing the edge $\{ v, w\}$ reduces the number of $4$-cliques without increasing the independence number.  Hence in an extremal graph, there are no edges between $V_{i-1}$ and $V_{i+1}$ for any $i$, and thus $G$ is indeed a blow-up of $C_5$ with parts of size $\frac{1}{5}n + o(n)$.

\medskip

We now seek to determine the sizes of the sets $V_i$ exactly.  Noting that $V_i \cup V_{i+1}$ is a clique for each $i$, it is easily verified that
\[ t_4(G) = \sum_{i=1}^5 \binom{|V_i \cup V_{i+1}|}{4} - \sum_{i=1}^5 \binom{|V_i|}{4}. \]
Define $y_i = |V_{2i- 1} \cup V_{2i}|$ for all $1 \le i \le 5$.  In $\sum y_i$, each vertex is counted twice, so we have $\sum y_i = 2n$.  Moreover, as $|V_i| = \frac{1}{5}n + o(n)$, we have $y_i = \frac{2}{5}n + o(n)$.  Finally, as $n - y_i - y_{i+1} = n - |V_{2i-1}| - |V_{2i}| - |V_{2i+1}| - |V_{2i+2}| = |V_{2i-2}|$, we can rewrite the above expression as
\[ t_4(G) = \sum_{i=1}^5 \binom{y_i}{4} - \sum_{i=1}^5 \binom{n - y_i - y_{i+1}}{5}. \]
Thus to find the extremal graph, we must minimize the above expression over integer values of $y_i$ subject to the conditions given earlier.  The solution is given by Lemma \ref{c5opt}, which we prove in Appendix \ref{app_intopt}.

\begin{lemma} \label{c5opt}
Let $\varepsilon > 0$ be sufficiently small, and $n$ sufficiently large.  Consider the function
\[ g(y_1, y_2, y_3, y_4, y_5) = \sum_{i=1}^5 \binom{y_i}{5} - \sum_{i=1}^5 \binom{n - y_i - y_{i+1}}{4}. \]
Subject to the constraints that the $y_i$ be integers satisfying $\sum_{i=1}^5 y_i = 2n$ and $\left| y_i - \frac{2}{5} n \right| < \varepsilon n$, $g$ is uniquely (up to cyclic permutation of the variables) minimized when the $y_i$ take values $\left \lfloor \frac{2n}{5} \right \rfloor$ and $\left \lceil \frac{2n}{5} \right \rceil$ in ascending order.
\end{lemma}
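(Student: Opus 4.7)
The plan is to prove the lemma in two stages. First I will show that at any minimizer, every coordinate $y_i$ must lie in $\{q, q+1\}$ where $q = \lfloor 2n/5 \rfloor$. Second, I will analyze the cyclic arrangement of these two values and show that $g$ is minimized precisely when all the $(q+1)$'s form a single consecutive cyclic block, which up to cyclic rotation coincides with the ascending arrangement described in the lemma.

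For the first stage I use a local swap argument. Suppose a minimizer has indices $i, j$ with $y_j \ge y_i + 2$, and replace $(y_i, y_j)$ by $(y_i + 1, y_j - 1)$. This preserves $\sum y_k = 2n$ and shrinks the spread of the $y_k$, so the constraint $|y_k - 2n/5| < \varepsilon n$ is still satisfied. The change in the first term of $g$ equals $\binom{y_i}{4} - \binom{y_j-1}{4} \le -\binom{y_i}{3} = -\Theta(n^3)$, since every $y_k$ lies in $[\tfrac{2}{5}n - \varepsilon n, \tfrac{2}{5}n + \varepsilon n]$. For the second term, the swap alters at most four of the values $z_k = n - y_k - y_{k+1}$ (only two if $i, j$ are cyclically adjacent), and the perturbations come in matched $+1$/$-1$ pairs. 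Since each $z_k = \tfrac{n}{5} + O(\varepsilon n)$, an elementary manipulation using $\binom{z+1}{4} - \binom{z}{4} = \binom{z}{3}$ shows that the induced change in $\sum \binom{z_k}{4}$ is only $O(n^2) + O(\varepsilon n^3)$, not $\Theta(n^3)$, because the four $\binom{z_k}{3}$ terms nearly cancel in pairs. Consequently, for $\varepsilon$ small enough and $n$ large enough, the net change in $g$ is strictly negative, contradicting minimality. Hence every $y_i \in \{q, q+1\}$ at the optimum.

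For the second stage, with all $y_i \in \{q, q+1\}$ the constraint $\sum y_i = 2n$ determines the multiset: exactly $r := 2n - 5q$ of the $y_i$ equal $q+1$ and $5 - r$ equal $q$. Thus $\sum \binom{y_i}{5}$ is fixed, and minimizing $g$ is equivalent to maximizing $\sum_{i=1}^{5} \binom{z_i}{4}$ over cyclic arrangements of this multiset. Let $k$ denote the number of maximal cyclic blocks of $(q+1)$'s; a direct count gives $5 - r - k$ pairs $(y_i, y_{i+1})$ summing to $2q$, exactly $2k$ pairs summing to $2q+1$, and $r - k$ pairs summing to $2q+2$, so
\[
\sum_{i=1}^{5} \binom{z_i}{4} = (5 - r - k)\binom{m}{4} + 2k\binom{m-1}{4} + (r-k)\binom{m-2}{4},
\]
where $m = n - 2q$. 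Two applications of Pascal's identity collapse the discrete derivative in $k$ to $-\bigl[\binom{m}{4} - 2\binom{m-1}{4} + \binom{m-2}{4}\bigr] = -\binom{m-2}{2}$, which is strictly negative for large $n$. Hence the sum is maximized at the smallest feasible $k$, namely $k = 1$ (the case $r = 0$ being trivial), forcing all $(q+1)$'s into a single cyclic block. This arrangement is unique up to cyclic rotation and coincides with the ascending one.

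The main technical obstacle lies in the first stage: since the leading $\Theta(n^3)$ gain from the swap is only of the same order of magnitude as the individual $\binom{z_k}{3} = \Theta(n^3)$ changes in the second term, the argument depends crucially on the near-cancellation of the four (or two) such changes. Making this cancellation rigorous and uniform for all sufficiently small $\varepsilon$ is the only delicate step; once it is in place, the second stage reduces to a routine finite combinatorial optimization.
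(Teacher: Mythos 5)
The statement you were given contains a misprint that your proof inherits in an essential way: the first sum should be $\sum_i \binom{y_i}{4}$, not $\sum_i \binom{y_i}{5}$ --- this is the function coming from the identity $t_4(G)=\sum_i\binom{|V_i\cup V_{i+1}|}{4}-\sum_i\binom{|V_i|}{4}$ in Section \ref{c5stab}, and it is the function the paper's own proof manipulates (its $\Delta g$ reduces to differences of $\binom{\cdot}{3}$ terms on \emph{both} sides). Your stage 1 works only because of the misprint. With $\binom{y_i}{5}$ the equalizing swap gains at least $\binom{y_i}{3}=\Theta(n^3)$ with an absolute constant, which indeed beats the $O(\varepsilon n^3)+O(n^2)$ imbalance of the affected $\binom{z_k}{3}$ terms once $\varepsilon$ is a small enough constant; for that literal reading your sketch is essentially sound. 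For the intended function, however, the gain from the swap is only $\Theta(sn^2)$, where $s+1$ is the difference of the two coordinates being equalized, while the imbalance of the two or four affected $\binom{z_k}{3}$ terms is $\Theta(tn^2)$ with $t$ possibly as large as $\Theta(\varepsilon n)$: no constant choice of $\varepsilon$ makes $\varepsilon n^3$ small compared with $n^2$, and in fact swapping an \emph{arbitrary} pair with $y_j\ge y_i+2$ can strictly increase $g$ (for instance $y=(2n/5+\varepsilon n/2,\,2n/5+1,\,2n/5-1,\,2n/5,\,2n/5-\varepsilon n/2)$, equalizing $y_2$ and $y_3$). So the blanket claim in your first stage --- that the net change is negative for every violating pair --- is false for the lemma that is actually needed, and this is a genuine gap rather than a technicality to be tightened.

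The paper's proof supplies exactly the missing idea: it swaps a pair realizing the \emph{maximal} difference, distinguishes adjacent from non-adjacent pairs, computes $\Delta g=\frac{1}{50}\left[(1+O(\varepsilon))t-(4+O(\varepsilon))s\right]n^2+O((s^2+t^2)n)$ (with coefficient $3$ in place of $4$ in the non-adjacent case), and uses maximality to force $t\le 2s+1$, the borderline non-adjacent case being reduced to the adjacent one; this same-order comparison of coefficients is the real content of the lemma. Your stage 2, on the other hand, is correct and independent of the misprint (once all $y_i\in\{q,q+1\}$ the first sum is constant under either reading), and the block-count argument with the discrete second difference $\binom{m}{4}-2\binom{m-1}{4}+\binom{m-2}{4}=\binom{m-2}{2}>0$ is a cleaner justification of the final tie-breaking than the paper's terse ``an easy calculation shows''. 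If you replace your stage 1 by the maximal-difference analysis above, your stage 2 completes a valid proof of the intended statement.
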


From Lemma \ref{c5opt}, we see the minimum occurs when $y_i = \left \lceil \frac{2n + i - 1}{5} \right \rceil$ for $1 \le i \le 5$.  Solving for $|V_i|$, we have that the unique extremal graph on $n$ vertices is the blow-up of $C_5$ to $n$ vertices such that:
\begin{itemize}
    \item when $n = 5k$, $|V_i| = k$ for all $i$,
    \item when $n = 5k + 1$, $|V_1| = |V_2| = k$, $|V_3| = |V_5| = k+1$, and $|V_4| = k-1$,
    \item when $n = 5k + 2$, $|V_1| = |V_2| = |V_4| = k$, and $|V_3| = |V_5| = k+1$,
    \item when $n = 5k + 3$, $|V_1| = |V_2| = |V_4| = k+1$, and $|V_3| = |V_5| = k$, and
    \item when $n = 5k + 4$, $|V_1| = |V_2| = k+ 1$, $|V_3| = |V_5| = k$, and $|V_4| = k+2$.
\end{itemize}

\section{The $(3,4)$-problem} \label{sec34}

In this section we solve the $(3,4)$-problem, and prove that Erd\H{o}s' conjecture holds for this case.  Recall that this entails showing that amongst all graphs of independence number less than four, $\overline{T_{n,3}}$, a disjoint union of three nearly-equal cliques, minimizes the number of triangles.

In the first subsection we list our flag algebra results, which give the asymptotic minimum number of triangles to be $\frac{1}{9} \binom{n}{3}$.  In the second subsection we use the structural information obtained to determine the value of $f(n,3,4)$ exactly.  We also analyze the structure of extremal graphs, and show they must contain $\overline{T_{n,3}}$.

\subsection{Getting the asymptotic result and densities}

We begin by presenting the 29 admissible - that is, $\overline{K_4}$-free - graphs of size 5, followed by the three types and associated flags used in the proof.

\begin{figure}[H]
\centering
\parbox{0.7in}{
\centering
\begin{tikzpicture}
  [scale=0.6,auto=left,every node/.style={circle, draw, fill=black!50,inner sep=0pt, minimum width=4pt}]
  \node (n1) at (18:1 cm) {};
  \node (n2) at (90:1 cm)  {};
  \node (n3) at (162:1 cm)  {};
  \node (n4) at (234:1 cm)  {};
  \node (n5) at (306:1 cm)  {};

  \foreach \from/\to in {n3/n4,n3/n5,n4/n5}
    \draw (\from) -- (\to);

\end{tikzpicture}
\caption*{$G_1$} }
\parbox{0.7in}{
\centering
\begin{tikzpicture}
  [scale=0.6,auto=left,every node/.style={circle, draw, fill=black!50,inner sep=0pt, minimum width=4pt}]
  \node (n1) at (18:1 cm) {};
  \node (n2) at (90:1 cm)  {};
  \node (n3) at (162:1 cm)  {};
  \node (n4) at (234:1 cm)  {};
  \node (n5) at (306:1 cm)  {};

  \foreach \from/\to in {n2/n5,n3/n4,n3/n5,n4/n5}
    \draw (\from) -- (\to);

\end{tikzpicture}
\caption*{$G_2$} }
\parbox{0.7in}{
\centering
\begin{tikzpicture}
  [scale=0.6,auto=left,every node/.style={circle, draw, fill=black!50,inner sep=0pt, minimum width=4pt}]
  \node (n1) at (18:1 cm) {};
  \node (n2) at (90:1 cm)  {};
  \node (n3) at (162:1 cm)  {};
  \node (n4) at (234:1 cm)  {};
  \node (n5) at (306:1 cm)  {};

  \foreach \from/\to in {n2/n4,n3/n5,n4/n5}
    \draw (\from) -- (\to);

\end{tikzpicture}
\caption*{$G_3$} }
\parbox{0.7in}{
\centering
\begin{tikzpicture}
  [scale=0.6,auto=left,every node/.style={circle, draw, fill=black!50,inner sep=0pt, minimum width=4pt}]
  \node (n1) at (18:1 cm) {};
  \node (n2) at (90:1 cm)  {};
  \node (n3) at (162:1 cm)  {};
  \node (n4) at (234:1 cm)  {};
  \node (n5) at (306:1 cm)  {};

  \foreach \from/\to in {n2/n4,n2/n5,n3/n4,n3/n5,n4/n5}
    \draw (\from) -- (\to);

\end{tikzpicture}
\caption*{$G_4$} }
\parbox{0.7in}{
\centering
\begin{tikzpicture}
  [scale=0.6,auto=left,every node/.style={circle, draw, fill=black!50,inner sep=0pt, minimum width=4pt}]
  \node (n1) at (18:1 cm) {};
  \node (n2) at (90:1 cm)  {};
  \node (n3) at (162:1 cm)  {};
  \node (n4) at (234:1 cm)  {};
  \node (n5) at (306:1 cm)  {};

  \foreach \from/\to in {n2/n3,n4/n5}
    \draw (\from) -- (\to);

\end{tikzpicture}
\caption*{$G_5$} }
\parbox{0.7in}{
\centering
\begin{tikzpicture}
  [scale=0.6,auto=left,every node/.style={circle, draw, fill=black!50,inner sep=0pt, minimum width=4pt}]
  \node (n1) at (18:1 cm) {};
  \node (n2) at (90:1 cm)  {};
  \node (n3) at (162:1 cm)  {};
  \node (n4) at (234:1 cm)  {};
  \node (n5) at (306:1 cm)  {};

  \foreach \from/\to in {n2/n3,n2/n4,n3/n5,n4/n5}
    \draw (\from) -- (\to);

\end{tikzpicture}
\caption*{$G_6$} }
\parbox{0.7in}{
\centering
\begin{tikzpicture}
  [scale=0.6,auto=left,every node/.style={circle, draw, fill=black!50,inner sep=0pt, minimum width=4pt}]
  \node (n1) at (18:1 cm) {};
  \node (n2) at (90:1 cm)  {};
  \node (n3) at (162:1 cm)  {};
  \node (n4) at (234:1 cm)  {};
  \node (n5) at (306:1 cm)  {};

  \foreach \from/\to in {n2/n3,n2/n4,n2/n5,n3/n4,n3/n5,n4/n5}
    \draw (\from) -- (\to);

\end{tikzpicture}
\caption*{$G_7$} }
\parbox{0.7in}{
\centering
\begin{tikzpicture}
  [scale=0.6,auto=left,every node/.style={circle, draw, fill=black!50,inner sep=0pt, minimum width=4pt}]
  \node (n1) at (18:1 cm) {};
  \node (n2) at (90:1 cm)  {};
  \node (n3) at (162:1 cm)  {};
  \node (n4) at (234:1 cm)  {};
  \node (n5) at (306:1 cm)  {};

  \foreach \from/\to in {n1/n5,n2/n5,n3/n4}
    \draw (\from) -- (\to);

\end{tikzpicture}
\caption*{$G_8$} }
\parbox{0.7in}{
\centering
\begin{tikzpicture}
  [scale=0.6,auto=left,every node/.style={circle, draw, fill=black!50,inner sep=0pt, minimum width=4pt}]
  \node (n1) at (18:1 cm) {};
  \node (n2) at (90:1 cm)  {};
  \node (n3) at (162:1 cm)  {};
  \node (n4) at (234:1 cm)  {};
  \node (n5) at (306:1 cm)  {};

  \foreach \from/\to in {n1/n5,n2/n5,n3/n4,n4/n5}
    \draw (\from) -- (\to);

\end{tikzpicture}
\caption*{$G_9$} }
\parbox{0.7in}{
\centering
\begin{tikzpicture}
  [scale=0.6,auto=left,every node/.style={circle, draw, fill=black!50,inner sep=0pt, minimum width=4pt}]
  \node (n1) at (18:1 cm) {};
  \node (n2) at (90:1 cm)  {};
  \node (n3) at (162:1 cm)  {};
  \node (n4) at (234:1 cm)  {};
  \node (n5) at (306:1 cm)  {};

  \foreach \from/\to in {n1/n2,n1/n4,n2/n5,n3/n4,n3/n5,n4/n5}
    \draw (\from) -- (\to);

\end{tikzpicture}
\caption*{$G_{10}$} }
\parbox{0.7in}{
\centering
\begin{tikzpicture}
  [scale=0.6,auto=left,every node/.style={circle, draw, fill=black!50,inner sep=0pt, minimum width=4pt}]
  \node (n1) at (18:1 cm) {};
  \node (n2) at (90:1 cm)  {};
  \node (n3) at (162:1 cm)  {};
  \node (n4) at (234:1 cm)  {};
  \node (n5) at (306:1 cm)  {};

  \foreach \from/\to in {n1/n5,n2/n4,n2/n5,n3/n4,n3/n5}
    \draw (\from) -- (\to);

\end{tikzpicture}
\caption*{$G_{11}$} }
\parbox{0.7in}{
\centering
\begin{tikzpicture}
  [scale=0.6,auto=left,every node/.style={circle, draw, fill=black!50,inner sep=0pt, minimum width=4pt}]
  \node (n1) at (18:1 cm) {};
  \node (n2) at (90:1 cm)  {};
  \node (n3) at (162:1 cm)  {};
  \node (n4) at (234:1 cm)  {};
  \node (n5) at (306:1 cm)  {};

  \foreach \from/\to in {n1/n5,n2/n4,n2/n5,n3/n4,n3/n5,n4/n5}
    \draw (\from) -- (\to);

\end{tikzpicture}
\caption*{$G_{12}$} }
\parbox{0.7in}{
\centering
\begin{tikzpicture}
  [scale=0.6,auto=left,every node/.style={circle, draw, fill=black!50,inner sep=0pt, minimum width=4pt}]
  \node (n1) at (18:1 cm) {};
  \node (n2) at (90:1 cm)  {};
  \node (n3) at (162:1 cm)  {};
  \node (n4) at (234:1 cm)  {};
  \node (n5) at (306:1 cm)  {};

  \foreach \from/\to in {n1/n5,n2/n3,n2/n4,n3/n4,n3/n5,n4/n5}
    \draw (\from) -- (\to);

\end{tikzpicture}
\caption*{$G_{13}$} }
\parbox{0.7in}{
\centering
\begin{tikzpicture}
  [scale=0.6,auto=left,every node/.style={circle, draw, fill=black!50,inner sep=0pt, minimum width=4pt}]
  \node (n1) at (18:1 cm) {};
  \node (n2) at (90:1 cm)  {};
  \node (n3) at (162:1 cm)  {};
  \node (n4) at (234:1 cm)  {};
  \node (n5) at (306:1 cm)  {};

  \foreach \from/\to in {n1/n5,n2/n3,n2/n4,n2/n5,n3/n4,n3/n5,n4/n5}
    \draw (\from) -- (\to);

\end{tikzpicture}
\caption*{$G_{14}$} }
\parbox{0.7in}{
\centering
\begin{tikzpicture}
  [scale=0.6,auto=left,every node/.style={circle, draw, fill=black!50,inner sep=0pt, minimum width=4pt}]
  \node (n1) at (18:1 cm) {};
  \node (n2) at (90:1 cm)  {};
  \node (n3) at (162:1 cm)  {};
  \node (n4) at (234:1 cm)  {};
  \node (n5) at (306:1 cm)  {};

  \foreach \from/\to in {n1/n4,n2/n5,n3/n4,n3/n5}
    \draw (\from) -- (\to);

\end{tikzpicture}
\caption*{$G_{15}$} }
\parbox{0.7in}{
\centering
\begin{tikzpicture}
  [scale=0.6,auto=left,every node/.style={circle, draw, fill=black!50,inner sep=0pt, minimum width=4pt}]
  \node (n1) at (18:1 cm) {};
  \node (n2) at (90:1 cm)  {};
  \node (n3) at (162:1 cm)  {};
  \node (n4) at (234:1 cm)  {};
  \node (n5) at (306:1 cm)  {};

  \foreach \from/\to in {n1/n4,n2/n5,n3/n4,n3/n5,n4/n5}
    \draw (\from) -- (\to);

\end{tikzpicture}
\caption*{$G_{16}$} }
\parbox{0.7in}{
\centering
\begin{tikzpicture}
  [scale=0.6,auto=left,every node/.style={circle, draw, fill=black!50,inner sep=0pt, minimum width=4pt}]
  \node (n1) at (18:1 cm) {};
  \node (n2) at (90:1 cm)  {};
  \node (n3) at (162:1 cm)  {};
  \node (n4) at (234:1 cm)  {};
  \node (n5) at (306:1 cm)  {};

  \foreach \from/\to in {n1/n4,n1/n5,n2/n4,n2/n5,n3/n4,n3/n5}
    \draw (\from) -- (\to);

\end{tikzpicture}
\caption*{$G_{17}$} }
\parbox{0.7in}{
\centering
\begin{tikzpicture}
  [scale=0.6,auto=left,every node/.style={circle, draw, fill=black!50,inner sep=0pt, minimum width=4pt}]
  \node (n1) at (18:1 cm) {};
  \node (n2) at (90:1 cm)  {};
  \node (n3) at (162:1 cm)  {};
  \node (n4) at (234:1 cm)  {};
  \node (n5) at (306:1 cm)  {};

  \foreach \from/\to in {n1/n4,n1/n5,n2/n4,n2/n5,n3/n4,n3/n5,n4/n5}
    \draw (\from) -- (\to);

\end{tikzpicture}
\caption*{$G_{18}$} }
\parbox{0.7in}{
\centering
\begin{tikzpicture}
  [scale=0.6,auto=left,every node/.style={circle, draw, fill=black!50,inner sep=0pt, minimum width=4pt}]
  \node (n1) at (18:1 cm) {};
  \node (n2) at (90:1 cm)  {};
  \node (n3) at (162:1 cm)  {};
  \node (n4) at (234:1 cm)  {};
  \node (n5) at (306:1 cm)  {};

  \foreach \from/\to in {n1/n4,n1/n5,n2/n3,n2/n4,n2/n5,n3/n4,n3/n5,n4/n5}
    \draw (\from) -- (\to);

\end{tikzpicture}
\caption*{$G_{19}$} }
\parbox{0.7in}{
\centering
\begin{tikzpicture}
  [scale=0.6,auto=left,every node/.style={circle, draw, fill=black!50,inner sep=0pt, minimum width=4pt}]
  \node (n1) at (18:1 cm) {};
  \node (n2) at (90:1 cm)  {};
  \node (n3) at (162:1 cm)  {};
  \node (n4) at (234:1 cm)  {};
  \node (n5) at (306:1 cm)  {};

  \foreach \from/\to in {n1/n3,n1/n5,n2/n4,n2/n5,n3/n4,n3/n5,n4/n5}
    \draw (\from) -- (\to);

\end{tikzpicture}
\caption*{$G_{20}$} }
\parbox{0.7in}{
\centering
\begin{tikzpicture}
  [scale=0.6,auto=left,every node/.style={circle, draw, fill=black!50,inner sep=0pt, minimum width=4pt}]
  \node (n1) at (18:1 cm) {};
  \node (n2) at (90:1 cm)  {};
  \node (n3) at (162:1 cm)  {};
  \node (n4) at (234:1 cm)  {};
  \node (n5) at (306:1 cm)  {};

  \foreach \from/\to in {n1/n3,n1/n4,n1/n5,n2/n3,n2/n4,n2/n5,n3/n4,n3/n5,n4/n5}
    \draw (\from) -- (\to);

\end{tikzpicture}
\caption*{$G_{21}$} }
\parbox{0.7in}{
\centering
\begin{tikzpicture}
  [scale=0.6,auto=left,every node/.style={circle, draw, fill=black!50,inner sep=0pt, minimum width=4pt}]
  \node (n1) at (18:1 cm) {};
  \node (n2) at (90:1 cm)  {};
  \node (n3) at (162:1 cm)  {};
  \node (n4) at (234:1 cm)  {};
  \node (n5) at (306:1 cm)  {};

  \foreach \from/\to in {n1/n2,n3/n4,n3/n5,n4/n5}
    \draw (\from) -- (\to);

\end{tikzpicture}
\caption*{$G_{22}$} }
\parbox{0.7in}{
\centering
\begin{tikzpicture}
  [scale=0.6,auto=left,every node/.style={circle, draw, fill=black!50,inner sep=0pt, minimum width=4pt}]
  \node (n1) at (18:1 cm) {};
  \node (n2) at (90:1 cm)  {};
  \node (n3) at (162:1 cm)  {};
  \node (n4) at (234:1 cm)  {};
  \node (n5) at (306:1 cm)  {};

  \foreach \from/\to in {n1/n2,n2/n5,n3/n4,n3/n5,n4/n5}
    \draw (\from) -- (\to);

\end{tikzpicture}
\caption*{$G_{23}$} }
\parbox{0.7in}{
\centering
\begin{tikzpicture}
  [scale=0.6,auto=left,every node/.style={circle, draw, fill=black!50,inner sep=0pt, minimum width=4pt}]
  \node (n1) at (18:1 cm) {};
  \node (n2) at (90:1 cm)  {};
  \node (n3) at (162:1 cm)  {};
  \node (n4) at (234:1 cm)  {};
  \node (n5) at (306:1 cm)  {};

  \foreach \from/\to in {n1/n2,n1/n5,n2/n5,n3/n4,n3/n5,n4/n5}
    \draw (\from) -- (\to);

\end{tikzpicture}
\caption*{$G_{24}$} }
\parbox{0.7in}{
\centering
\begin{tikzpicture}
  [scale=0.6,auto=left,every node/.style={circle, draw, fill=black!50,inner sep=0pt, minimum width=4pt}]
  \node (n1) at (18:1 cm) {};
  \node (n2) at (90:1 cm)  {};
  \node (n3) at (162:1 cm)  {};
  \node (n4) at (234:1 cm)  {};
  \node (n5) at (306:1 cm)  {};

  \foreach \from/\to in {n1/n2,n1/n4,n2/n5,n3/n4,n3/n5,n4/n5}
    \draw (\from) -- (\to);

\end{tikzpicture}
\caption*{$G_{25}$} }
\parbox{0.7in}{
\centering
\begin{tikzpicture}
  [scale=0.6,auto=left,every node/.style={circle, draw, fill=black!50,inner sep=0pt, minimum width=4pt}]
  \node (n1) at (18:1 cm) {};
  \node (n2) at (90:1 cm)  {};
  \node (n3) at (162:1 cm)  {};
  \node (n4) at (234:1 cm)  {};
  \node (n5) at (306:1 cm)  {};

  \foreach \from/\to in {n1/n2,n1/n3,n2/n4,n3/n5,n4/n5}
    \draw (\from) -- (\to);

\end{tikzpicture}
\caption*{$G_{26}$} }
\parbox{0.7in}{
\centering
\begin{tikzpicture}
  [scale=0.6,auto=left,every node/.style={circle, draw, fill=black!50,inner sep=0pt, minimum width=4pt}]
  \node (n1) at (18:1 cm) {};
  \node (n2) at (90:1 cm)  {};
  \node (n3) at (162:1 cm)  {};
  \node (n4) at (234:1 cm)  {};
  \node (n5) at (306:1 cm)  {};

  \foreach \from/\to in {n1/n2,n1/n3,n2/n4,n2/n5,n3/n4,n3/n5,n4/n5}
    \draw (\from) -- (\to);

\end{tikzpicture}
\caption*{$G_{27}$} }
\parbox{0.7in}{
\centering
\begin{tikzpicture}
  [scale=0.6,auto=left,every node/.style={circle, draw, fill=black!50,inner sep=0pt, minimum width=4pt}]
  \node (n1) at (18:1 cm) {};
  \node (n2) at (90:1 cm)  {};
  \node (n3) at (162:1 cm)  {};
  \node (n4) at (234:1 cm)  {};
  \node (n5) at (306:1 cm)  {};

  \foreach \from/\to in {n1/n2,n1/n3,n1/n5,n2/n4,n2/n5,n3/n4,n3/n5,n4/n5}
    \draw (\from) -- (\to);

\end{tikzpicture}
\caption*{$G_{28}$} }
\parbox{0.7in}{
\centering
\begin{tikzpicture}
  [scale=0.6,auto=left,every node/.style={circle, draw, fill=black!50,inner sep=0pt, minimum width=4pt}]
  \node (n1) at (18:1 cm) {};
  \node (n2) at (90:1 cm)  {};
  \node (n3) at (162:1 cm)  {};
  \node (n4) at (234:1 cm)  {};
  \node (n5) at (306:1 cm)  {};

  \foreach \from/\to in {n1/n2,n1/n3,n1/n4,n1/n5,n2/n3,n2/n4,n2/n5,n3/n4,n3/n5,n4/n5}
    \draw (\from) -- (\to);

\end{tikzpicture}
\caption*{$G_{29}$} }
\caption{Graphs of size $5$ with independence number at most $3$.}
\end{figure}

\begin{figure}[H]
\centering
\parbox{1in}{
\centering
\begin{tikzpicture}
  [scale=0.6,auto=left,every node/.style={circle, draw, fill=black!50,inner sep=0pt, minimum width=4pt}]
  \node (n3) at (60:1 cm) [label=right:$3$]{};
  \node (n1) at (180:1 cm) [label=left:$1$]{};
  \node (n2) at (300:1 cm) [label=right:$2$]{};

  \foreach \from/\to in {n1/n2}
    \draw (\from) -- (\to);

\end{tikzpicture}
\caption*{$\tau_1$} }
\parbox{0.8in}{
\centering
\begin{tikzpicture}
  [scale=0.6,auto=left,every node/.style={circle, draw, fill=black!50,inner sep=0pt, minimum width=4pt}]
  \node (n4) at (45:1 cm) {};
  \node (n2) at (135:1 cm) [label=left:$2$]{};
  \node (n1) at (225:1 cm) [label=left:$1$]{};
  \node (n3) at (315:1 cm) [label=right:$3$]{};

  \foreach \from/\to in {n1/n2,n2/n4}
    \draw (\from) -- (\to);

\end{tikzpicture}
\caption*{$M_1$} }
\parbox{0.8in}{
\centering
\begin{tikzpicture}
  [scale=0.6,auto=left,every node/.style={circle, draw, fill=black!50,inner sep=0pt, minimum width=4pt}]
  \node (n4) at (45:1 cm) {};
  \node (n2) at (135:1 cm) [label=left:$2$]{};
  \node (n1) at (225:1 cm) [label=left:$1$]{};
  \node (n3) at (315:1 cm) [label=right:$3$]{};

  \foreach \from/\to in {n1/n2,n1/n4}
    \draw (\from) -- (\to);

\end{tikzpicture}
\caption*{$M_2$} }
\parbox{0.8in}{
\centering
\begin{tikzpicture}
  [scale=0.6,auto=left,every node/.style={circle, draw, fill=black!50,inner sep=0pt, minimum width=4pt}]
  \node (n4) at (45:1 cm) {};
  \node (n2) at (135:1 cm) [label=left:$2$]{};
  \node (n1) at (225:1 cm) [label=left:$1$]{};
  \node (n3) at (315:1 cm) [label=right:$3$]{};

  \foreach \from/\to in {n1/n2,n2/n4,n3/n4}
    \draw (\from) -- (\to);

\end{tikzpicture}
\caption*{$M_3$} }
\parbox{0.8in}{
\centering
\begin{tikzpicture}
  [scale=0.6,auto=left,every node/.style={circle, draw, fill=black!50,inner sep=0pt, minimum width=4pt}]
  \node (n4) at (45:1 cm) {};
  \node (n2) at (135:1 cm) [label=left:$2$]{};
  \node (n1) at (225:1 cm) [label=left:$1$]{};
  \node (n3) at (315:1 cm) [label=right:$3$]{};

  \foreach \from/\to in {n1/n2,n1/n4,n3/n4}
    \draw (\from) -- (\to);

\end{tikzpicture}
\caption*{$M_4$} }
\caption{Type $\tau_1$ and its flags of size $4$.}
\end{figure}

\begin{figure}[H]
\centering
\parbox{1in}{
\centering
\begin{tikzpicture}
  [scale=0.6,auto=left,every node/.style={circle, draw, fill=black!50,inner sep=0pt, minimum width=4pt}]
  \node (n3) at (60:1 cm) [label=right:$3$]{};
  \node (n1) at (180:1 cm) [label=left:$1$]{};
  \node (n2) at (300:1 cm) [label=right:$2$]{};

  \foreach \from/\to in {n1/n2,n1/n3}
    \draw (\from) -- (\to);

\end{tikzpicture}
\caption*{$\tau_2$}}
\parbox{0.8in}{
\centering
\begin{tikzpicture}
  [scale=0.6,auto=left,every node/.style={circle, draw, fill=black!50,inner sep=0pt, minimum width=4pt}]
  \node (n4) at (45:1 cm) {};
  \node (n2) at (135:1 cm) [label=left:$2$]{};
  \node (n1) at (225:1 cm) [label=left:$1$]{};
  \node (n3) at (315:1 cm) [label=right:$3$]{};

  \foreach \from/\to in {n1/n2,n1/n3}
    \draw (\from) -- (\to);

\end{tikzpicture}
\caption*{$N_1$} }
\parbox{0.8in}{
\centering
\begin{tikzpicture}
  [scale=0.6,auto=left,every node/.style={circle, draw, fill=black!50,inner sep=0pt, minimum width=4pt}]
  \node (n4) at (45:1 cm) {};
  \node (n2) at (135:1 cm) [label=left:$2$]{};
  \node (n1) at (225:1 cm) [label=left:$1$]{};
  \node (n3) at (315:1 cm) [label=right:$3$]{};

  \foreach \from/\to in {n1/n2,n1/n3,n1/n4}
    \draw (\from) -- (\to);

\end{tikzpicture}
\caption*{$N_2$} }
\parbox{0.8in}{
\centering
\begin{tikzpicture}
  [scale=0.6,auto=left,every node/.style={circle, draw, fill=black!50,inner sep=0pt, minimum width=4pt}]
  \node (n4) at (45:1 cm) {};
  \node (n2) at (135:1 cm) [label=left:$2$]{};
  \node (n1) at (225:1 cm) [label=left:$1$]{};
  \node (n3) at (315:1 cm) [label=right:$3$]{};

  \foreach \from/\to in {n1/n2,n1/n3,n1/n4,n3/n4}
    \draw (\from) -- (\to);

\end{tikzpicture}
\caption*{$N_3$} }
\parbox{0.8in}{
\centering
\begin{tikzpicture}
  [scale=0.6,auto=left,every node/.style={circle, draw, fill=black!50,inner sep=0pt, minimum width=4pt}]
  \node (n4) at (45:1 cm) {};
  \node (n2) at (135:1 cm) [label=left:$2$]{};
  \node (n1) at (225:1 cm) [label=left:$1$]{};
  \node (n3) at (315:1 cm) [label=right:$3$]{};

  \foreach \from/\to in {n1/n2,n1/n3,n1/n4,n2/n4}
    \draw (\from) -- (\to);

\end{tikzpicture}
\caption*{$N_4$} }
\parbox{0.8in}{
\centering
\begin{tikzpicture}
  [scale=0.6,auto=left,every node/.style={circle, draw, fill=black!50,inner sep=0pt, minimum width=4pt}]
  \node (n4) at (45:1 cm) {};
  \node (n2) at (135:1 cm) [label=left:$2$]{};
  \node (n1) at (225:1 cm) [label=left:$1$]{};
  \node (n3) at (315:1 cm) [label=right:$3$]{};

  \foreach \from/\to in {n1/n2,n1/n3,n3/n4}
    \draw (\from) -- (\to);

\end{tikzpicture}
\caption*{$N_5$} }
\parbox{0.8in}{
\centering
\begin{tikzpicture}
  [scale=0.6,auto=left,every node/.style={circle, draw, fill=black!50,inner sep=0pt, minimum width=4pt}]
  \node (n4) at (45:1 cm) {};
  \node (n2) at (135:1 cm) [label=left:$2$]{};
  \node (n1) at (225:1 cm) [label=left:$1$]{};
  \node (n3) at (315:1 cm) [label=right:$3$]{};

  \foreach \from/\to in {n1/n2,n1/n3,n2/n4}
    \draw (\from) -- (\to);

\end{tikzpicture}
\caption*{$N_6$} }
\parbox{0.8in}{
\centering
\begin{tikzpicture}
  [scale=0.6,auto=left,every node/.style={circle, draw, fill=black!50,inner sep=0pt, minimum width=4pt}]
  \node (n2) at (135:1 cm) [label=left:$2$]{};
  \node (n4) at (45:1 cm) {};
  \node (n1) at (225:1 cm) [label=left:$1$]{};
  \node (n3) at (315:1 cm) [label=right:$3$]{};

  \foreach \from/\to in {n1/n2,n1/n3,n1/n4,n2/n4,n3/n4}
    \draw (\from) -- (\to);

\end{tikzpicture}
\caption*{$N_7$} }
\parbox{0.8in}{
\centering
\begin{tikzpicture}
  [scale=0.6,auto=left,every node/.style={circle, draw, fill=black!50,inner sep=0pt, minimum width=4pt}]
  \node (n4) at (45:1 cm) {};
  \node (n2) at (135:1 cm) [label=left:$2$]{};
  \node (n1) at (225:1 cm) [label=left:$1$]{};
  \node (n3) at (315:1 cm) [label=right:$3$]{};

  \foreach \from/\to in {n1/n2,n1/n3,n2/n4,n3/n4}
    \draw (\from) -- (\to);

\end{tikzpicture}
\caption*{$N_8$} } \caption{Type $\tau_2$ and its flags of size $4$.}
\end{figure}

\begin{figure}[H]
\centering
\parbox{0.8in}{
\centering
\begin{tikzpicture}
  [scale=0.6,auto=left,every node/.style={circle, draw, fill=black!50,inner sep=0pt, minimum width=4pt}]
  \node (n1) at (0:0 cm) [label=left:$1$]{};

  \foreach \from/\to in {}
    \draw (\from) -- (\to);

\end{tikzpicture}
\caption*{dot}}
\parbox{0.8in}{
\centering
\begin{tikzpicture}
  [scale=0.6,auto=left,every node/.style={circle, draw, fill=black!50,inner sep=0pt, minimum width=4pt}]
  \node (n2) at (0:1 cm) {};
  \node (n1) at (180:1 cm) [label=left:$1$]{};

  \foreach \from/\to in {n1/n2}
    \draw (\from) -- (\to);

\end{tikzpicture}
\caption*{$\rho$} }
\parbox{0.8in}{
\centering
\begin{tikzpicture}
  [scale=0.6,auto=left,every node/.style={circle, draw, fill=black!50,inner sep=0pt, minimum width=4pt}]
  \node (n2) at (60:1 cm) {};
  \node (n1) at (180:1 cm) [label=left:$1$]{};
  \node (n3) at (300:1 cm) {};

  \foreach \from/\to in {}
    \draw (\from) -- (\to);

\end{tikzpicture}
\caption*{$Z_1$} }
\parbox{0.8in}{
\centering
\begin{tikzpicture}
  [scale=0.6,auto=left,every node/.style={circle, draw, fill=black!50,inner sep=0pt, minimum width=4pt}]
  \node (n2) at (60:1 cm) {};
  \node (n1) at (180:1 cm) [label=left:$1$]{};
  \node (n3) at (300:1 cm) {};

  \foreach \from/\to in {n1/n3}
    \draw (\from) -- (\to);

\end{tikzpicture}
\caption*{$Z_2$} }
\parbox{0.8in}{
\centering
\begin{tikzpicture}
  [scale=0.6,auto=left,every node/.style={circle, draw, fill=black!50,inner sep=0pt, minimum width=4pt}]
  \node (n2) at (60:1 cm) {};
  \node (n1) at (180:1 cm) [label=left:$1$]{};
  \node (n3) at (300:1 cm) {};

  \foreach \from/\to in {n2/n3}
    \draw (\from) -- (\to);

\end{tikzpicture}
\caption*{$Z_3$} }
\parbox{0.8in}{
\centering
\begin{tikzpicture}
  [scale=0.6,auto=left,every node/.style={circle, draw, fill=black!50,inner sep=0pt, minimum width=4pt}]
  \node (n2) at (60:1 cm) {};
  \node (n1) at (180:1 cm) [label=left:$1$]{};
  \node (n3) at (300:1 cm) {};

  \foreach \from/\to in {n1/n3,n2/n3}
    \draw (\from) -- (\to);

\end{tikzpicture}
\caption*{$Z_4$} }
\parbox{0.8in}{
\centering
\begin{tikzpicture}
  [scale=0.6,auto=left,every node/.style={circle, draw, fill=black!50,inner sep=0pt, minimum width=4pt}]
  \node (n2) at (60:1 cm) {};
  \node (n1) at (180:1 cm) [label=left:$1$]{};
  \node (n3) at (300:1 cm) {};

  \foreach \from/\to in {n1/n2,n2/n3,n1/n3}
    \draw (\from) -- (\to);

\end{tikzpicture}
\caption*{$Z_5$} } \caption{Type \emph{dot} and its flags.}
\end{figure}

In the subsequent lemmas, for each type used in the proof, we express the corresponding positive semi-definite matrices as squares of flags, and give their expansions into graphs of size 5.  The coefficients were obtained through the use of a computer program, but can easily be verified by hand, just as in the previous section.

\begin{lemma}
For the type $\tau_1$, we have
\begin{multline*}\begin{split}
\Delta_1 &= [[(M_1-M_2)^2]]_{\tau_1}\\
         &=\frac{1}{30}\big(G_{2} - G_{3} -4 G_{6}\big),
\end{split}\end{multline*}
\vspace{-0.2in}
\begin{multline*}\begin{split}
\Delta_2 &= [[(3M_1 -3M_2 -10M_3 +10M_4)^2]]_{\tau_1}\\
&=\frac{1}{30}\big(9 G_{2} -9 G_{3} -36 G_{6} -60 G_{9} +160 G_{11} +100 G_{13}+60 G_{15} -60 G_{16} -100 G_{25} -500 G_{26}\big).
\end{split}\end{multline*}
\end{lemma}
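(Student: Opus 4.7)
The plan is to follow the combinatorial template of Lemmas~\ref{lem41}--\ref{lem43}. Fix a graph $G$ on $n$ vertices, and let $\tau_1(G)$ denote the set of ordered triples $p=(x,y,z)$ of distinct vertices with $\{x,y\}\in E(G)$ and $z\notin N(x)\cup N(y)$; each such triple induces a labelled copy of $\tau_1$ with $x,y,z$ carrying labels $1,2,3$. For $p\in\tau_1(G)$ and $v\in V(G)\setminus\{x,y,z\}$, let $F(p,v)$ be the $\tau_1$-flag on $\{x,y,z,v\}$; it equals $M_j$ precisely when the adjacencies of $v$ to $\{x,y,z\}$ match those prescribed by the picture of $M_j$.

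To capture each square combinatorially, I would assign to each unlabelled vertex the weights appearing in that square:
\[
d_p^{(1)}(v) = \begin{cases} +1, & F(p,v) = M_1, \\ -1, & F(p,v) = M_2, \\ 0, & \text{otherwise}, \end{cases}
\qquad
d_p^{(2)}(v) = \begin{cases} +3, & F(p,v) = M_1, \\ -3, & F(p,v) = M_2, \\ -10, & F(p,v) = M_3, \\ +10, & F(p,v) = M_4, \\ 0, & \text{otherwise}. \end{cases}
\]
Unpacking the averaging operator $[[\cdot]]_{\tau_1}$ exactly as in the proof of Lemma~\ref{lem41} yields, for $i\in\{1,2\}$,
\[
\Delta_i(G) \;=\; \frac{1}{120\binom{n}{5}} \sum_{p\in\tau_1(G)} \sum_{\substack{v,w\notin\{x,y,z\}\\ v\ne w}} d_p^{(i)}(v)\,d_p^{(i)}(w).
\]
The identity $\sum_{v\ne w} d(v)d(w) = \bigl(\sum_v d(v)\bigr)^2 - \sum_v d(v)^2$ shows that the principal term is non-negative and the diagonal correction contributes only $O(1/n)$, so $\Delta_1$ and $\Delta_2$ are asymptotically non-negative as expected.

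The actual content of the lemma is the explicit expansion into the order-$5$ admissible graphs $G_j$. For each $G_j$ appearing with non-zero coefficient I would compute $\Delta_i(G_j)$ directly: list the ordered choices $p=(x,y,z)\in\tau_1(G_j)$, that is, the labelled embeddings of an edge together with a non-neighbour, and for each such $p$ sum $d_p^{(i)}(v)d_p^{(i)}(w)$ over the two remaining vertices $v,w$ of $G_j$; dividing by $120$ produces the stated coefficient $\tfrac{1}{30}(\cdot)$. The expansion of $\Delta_1$ involves only $G_2$, $G_3$, $G_6$ and is short; the expansion of $\Delta_2$ touches ten graphs and is correspondingly more laborious. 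The main obstacle is therefore the sheer length of the bookkeeping, modelled exactly on the case analysis illustrated for $\Delta_1(G_{10})$ in Figure~\ref{lemma_cases} of Section~\ref{sec43}. No new idea is required beyond those already used in Section~\ref{sec43}, and each coefficient may be verified either by hand or by a short computer check against the SDP output.
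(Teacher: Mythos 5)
Your proposal is correct and follows essentially the same route as the paper: the paper itself gives no separate proof of this lemma, stating only that the coefficients (found by SDP) "can easily be verified by hand, just as in the previous section," i.e.\ exactly via the weighted-sum interpretation over embeddings of $\tau_1$ with the $\frac{1}{120\binom{n}{5}}$ normalization and the identity $\sum_{v\ne w}d(v)d(w)=(\sum_v d(v))^2-\sum_v d(v)^2$ that you reproduce from Lemma \ref{lem41}. Your recipe does yield the stated coefficients (e.g.\ it gives $\Delta_1(G_2)=\tfrac{1}{30}$, $\Delta_1(G_3)=-\tfrac{1}{30}$, $\Delta_1(G_6)=-\tfrac{2}{15}$), and leaving the remaining finite enumeration as routine bookkeeping matches the level of detail the paper itself provides.
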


\medskip

\begin{lemma}
For the type $\tau_2$, we have
\begin{multline*}\begin{split}
\Delta_3 =& [[(-3N_1-N_2+3N_3+3N_4)^2]]_{\tau_2}\\
=&\frac{1}{30}\big(-18 G_{2} +9 G_{8} +3 G_{9} -11 G_{10} +3 G_{12} +27 G_{14} -18 G_{16} +9 G_{20} +36 G_{24}\big),
\end{split}\end{multline*}
\vspace{-0.2in}
\begin{multline*}\begin{split}
\Delta_4 =& [[(-20N_1 -20N_2 +11N_3 +11N_4 + 9N_5+9N_6)^2]]_{\tau_1}\\
=&\frac{1}{30}\big(-440 G_{2} -360 G_{3} +400 G_{8} +121 G_{9} -480 G_{10} -360 G_{11}-319 G_{12} +198 G_{13} +363 G_{14} \\
&-279 G_{15} -242 G_{16} +121 G_{20} +279 G_{23} +484 G_{24} +198 G_{25} +405 G_{26}\big),
\end{split}\end{multline*}
\vspace{-0.2in}
\begin{multline*}\begin{split}
\Delta_5 =& [[(-19N_1 -15N_2 +15N_3 +15N_4 + 4N_5+4N_6+15N_7)^2]]_{\tau_1}\\
=&\frac{1}{30}\big(-570 G_{2} -152 G_{3} -570 G_{4} +361 G_{8} +181 G_{9}-675 G_{10}-120 G_{11} -735 G_{12} +240 G_{13} \\
&+675 G_{14} -136 G_{15} -450 G_{16} -1350 G_{18} +900 G_{19} +795 G_{20} +675 G_{21} +136 G_{23}+900 G_{24} \\
&+120 G_{25} +80 G_{26} +450 G_{28}\big),
\end{split}\end{multline*}
\vspace{-0.2in}
\begin{multline*}\begin{split}
\Delta_6 =& [[(-6N_1 -14N_2 -2N_3 -2N_4 + 8N_5+8N_6-5N_7+10N_8)^2]]_{\tau_1}\\
=&\frac{1}{30}\big(+24 G_{2} -96 G_{3} +60 G_{4} -240 G_{6} +36 G_{8}-76 G_{9} +308 G_{10} -264 G_{11} +160 G_{12} -112 G_{13} \\
&+12 G_{14} -32 G_{15} -8 G_{16} -540 G_{17} +420 G_{18}+40 G_{19}-56 G_{20} +75 G_{21} +32 G_{23} +16 G_{24}\\
&+88 G_{25} +320 G_{26} -80 G_{27} -150 G_{28}\big).
\end{split}\end{multline*}
\end{lemma}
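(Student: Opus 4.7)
All four identities have a uniform structure: each $\Delta_i$ ($i \in \{3,4,5,6\}$) is defined as $[[P_i^2]]_{\tau_2}$, where $P_i = \sum_{j=1}^{8} c_{ij} N_j$ is an explicit linear combination of the $\tau_2$-flags $N_1, \ldots, N_8$ of size $4$. The plan is to follow verbatim the template established in the proofs of Lemmas 4.1 and 4.2 for the $(4,3)$-problem: first establish asymptotic non-negativity as a sum of squares, then verify the expansion coefficients into the $29$ admissible $5$-vertex graphs $G_1, \ldots, G_{29}$.

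For the non-negativity, let $\tau_2(G)$ denote the set of ordered triples $(x,y,z) \in V(G)^3$ satisfying $\{x,y\}, \{x,z\} \in E(G)$ and $\{y,z\} \notin E(G)$; each such triple induces a copy of the labeled type $\tau_2$ with $x$ labeled $1$, $y$ labeled $2$, $z$ labeled $3$. For $p = (x,y,z) \in \tau_2(G)$ and $v \in V(G) \setminus \{x,y,z\}$, set $d_p^{(i)}(v) = c_{ij(v)}$, where $j(v) \in \{1, \ldots, 8\}$ is the index of the $\tau_2$-flag induced by $\{x,y,z,v\}$. A direct double-counting argument, identical to the one in Lemmas 4.1 and 4.2, gives
\[
\Delta_i(G) = \frac{1}{120 \binom{n}{5}} \sum_{p \in \tau_2(G)} \sum_{\substack{v, w \notin \{x,y,z\} \\ v \neq w}} d_p^{(i)}(v)\, d_p^{(i)}(w) = \frac{1}{120 \binom{n}{5}} \sum_{p \in \tau_2(G)} \left[ \Bigl(\sum_{v} d_p^{(i)}(v)\Bigr)^{\!2} - \sum_{v} d_p^{(i)}(v)^2 \right],
\]
and the second term contributes only $O(1/n)$, so $\Delta_i(G) \ge o_{n \to \infty}(1)$.

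The substantive task is verifying the explicit expansions on the right-hand side of each identity. For each pair $(i,j)$, the coefficient of $G_j$ is
\[
\Delta_i(G_j) = \sum_{k,l=1}^{8} c_{ik}\, c_{il}\, d_{\tau_2}(N_k, N_l; G_j),
\]
where $d_{\tau_2}(N_k, N_l; G_j)$ is obtained by averaging, over all injections $L : [3] \to V(G_j)$ whose image induces the type $\tau_2$ and over all ordered pairs of remaining vertices $(v,w)$, the indicator that $L([3]) \cup \{v\}$ induces $N_k$ and $L([3]) \cup \{w\}$ induces $N_l$. This is exactly the case analysis illustrated for the coefficient of $G_{10}$ in Lemma 4.1, carried out once per $(i,j)$.

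The main obstacle is sheer bookkeeping rather than any conceptual hurdle: with four identities, eight flags, and $29$ target graphs, there are hundreds of coefficients to tabulate, and a single arithmetic slip would invalidate the eventual bound. Since the coefficient vectors $(c_{ij})$ were produced by an SDP solver, my strategy is to compute once the full tensor $\bigl[d_{\tau_2}(N_k, N_l; G_j)\bigr]$ by a short program, then evaluate the quadratic form $c_i^{\top}(\cdot) c_i$ for each $i$ to read off the coefficient vectors. Hand spot-checks on graphs for which the claimed coefficient is small or zero (following the $G_{10}$ template of Lemma 4.1) guard against both programming and typographical errors, and allow us to omit the bulk of the calculations from the final write-up just as the preceding lemmas do.
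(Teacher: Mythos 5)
Your proposal is correct and takes essentially the same route as the paper: for this lemma the paper offers no separate argument, saying only that the coefficients were produced by computer and "can easily be verified by hand, just as in the previous section," which is exactly the sum-of-squares double-counting for asymptotic non-negativity plus graph-by-graph coefficient verification (with the $\frac{1}{120\binom{n}{5}}$ normalization) that you lay out. Your implicit reading of the subscripts $[[\cdot]]_{\tau_1}$ in the statements of $\Delta_4$, $\Delta_5$, $\Delta_6$ as typos for $[[\cdot]]_{\tau_2}$ is also the intended one.
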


\medskip

\begin{lemma}
For the type \emph{dot}, we have
\begin{multline*}\begin{split}
\Delta_7 =& [[(-2Z_1 + Z_2)^2]]_{dot}\\
=&\frac{1}{15} \big(6 G_{1} + 2G_{2} +2 G_{3} -8 G_{5} +4 G_{6} -10 G_{8} -4 G_{9} -2 G_{11}-G_{15} +G_{16} +6 G_{22} +2 G_{23} \\
&+ G_{25} +5 G_{26}\big),
\end{split}\end{multline*}
\vspace{-0.2in}
\begin{multline*}\begin{split}
\Delta_8 =& [[(-2Z_1 -Z_2 + 4Z_3)^2]]_{dot}\\
=&\frac{1}{15} \big(-42 G_{1} -2 G_{2} +10 G_{3} +24 G_{4} +24 G_{5} +36 G_{6} +48 G_{7} -2 G_{8} -4 G_{9}+2 G_{11} -4 G_{13}\\
& - G_{15} -7 G_{16} -18 G_{22} -6 G_{23} + G_{25} +5 G_{26}\big),
\end{split}\end{multline*}
\vspace{-0.2in}
\begin{multline*}\begin{split}
\Delta_9 =& [[(7Z_1 -4Z_2 + Z_3 + 3Z_4)^2]]_{dot}\\
=&\frac{1}{15}\big(138 G_{1} +61 G_{2} +43 G_{3} -39 G_{4} -141 G_{5} -45 G_{6} +3 G_{7} -146 G_{8}-19 G_{9}+42 G_{10}\\
&-52 G_{11} +21 G_{12} -22 G_{13} +9 G_{14} -25 G_{15}+65 G_{16} +54 G_{17} +54 G_{18} +18 G_{19} -6 G_{20}\\
&+72 G_{22} -21 G_{23} -96 G_{24} +19 G_{25} +125 G_{26} +18 G_{27}\big),
\end{split}\end{multline*}
\vspace{-0.2in}
\begin{multline*}\begin{split}
\Delta_{10} =& [[(8Z_1 -2Z_2-9Z_3 + 10Z_5)^2]]_{dot}\\
=&\frac{1}{15} \big(-168 G_{1} +103 G_{2} +85 G_{3} +170 G_{4} +153 G_{5} +226 G_{6} +3 G_{7} -16 G_{8}+4 G_{9} +160 G_{10}\\
&-16 G_{11} +120 G_{12} -132 G_{13} -120 G_{14} +16 G_{15} +80 G_{16} +240 G_{18} +70 G_{19} -120 G_{20}\\
&+600 G_{21} -138 G_{22} -136 G_{23} -260 G_{24} -86 G_{25} +20 G_{26} +200 G_{28} +1500 G_{29}\big),
\end{split}\end{multline*}
\vspace{-0.2in}
\begin{multline*}\begin{split}
\Delta_{11} =& \left[\left[\left(\rho-\frac{1}{3}\right)^2\right]\right]_{dot}\\
=&\frac{1}{90}\big(G_{1}+G_{2}-2G_{3}+4G_{4}-2G_{5}-2G_{6}+10G_{7}-5G_{8}-2G_{9}+4G_{10}-2G_{11}+7G_{12}\\
&+4G_{13}+13G_{14}-5G_{15}+G_{16}+G_{17}+13G_{18}+19G_{19}+10G_{20}+28G_{21}-5G_{22}-2G_{23}\\
&+4G_{24}+G_{25}-5G_{26}+7G_{27}+16G_{28}+40G_{29}\big).
\end{split}\end{multline*}
\end{lemma}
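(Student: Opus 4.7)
The plan is to follow the same scheme used in Lemmas 4.1 and 4.2. Each of $\Delta_7,\ldots,\Delta_{11}$ is, by construction, of the form $[[p^2]]_{dot}$ for an explicit polynomial $p$ in dot-flags (with $p=\rho-\tfrac13$ in the last case). Asymptotic non-negativity is therefore automatic: for any graph $G$ on $n$ vertices,
\[
\Delta_j(G) = \frac{1}{n}\sum_{x\in V(G)} p(F_x)^2 + O(1/n),
\]
where $F_x$ denotes the dot-flag obtained by labeling $x\in V(G)$ as the distinguished vertex $1$, and the $O(1/n)$ error accounts for the disjointness requirement in each flag product $Z_i\cdot Z_k$ versus the independent sampling in $p_{dot}(Z_i;F_x)\,p_{dot}(Z_k;F_x)$ (cf.\ Fact \ref{small_error}). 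The real content of the lemma is therefore the explicit expansion of each $\Delta_j$ in the admissible $5$-vertex graphs $G_1,\ldots,G_{29}$.

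To verify a given expansion, I would compute the coefficient $\Delta_j(G_k)$ for every $k\in\{1,\ldots,29\}$ exactly as in the model calculation of $\Delta_1(G_{10})$ within the proof of Lemma \ref{lem41}. Fix $G_k$ and, in turn, label each vertex $x\in V(G_k)$ as the distinguished vertex; for each such labeling, partition the remaining four vertices into an ordered pair $(S_1,S_2)$ of $2$-subsets. The two $3$-vertex sets $\{x\}\cup S_1$ and $\{x\}\cup S_2$ each induce a dot-flag isomorphic to some $Z_i$, and the contribution to $\Delta_j(G_k)$ from this configuration is the product of the corresponding two coefficients of $p$. Summing over $x$ and over all $\binom{4}{2}=6$ ordered partitions, and applying the standard normalization, reproduces the claimed coefficient. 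For $\Delta_{11}$ the scalar $\tfrac13$ enters through the identity element $\mathbf{1}=\sum_k G_k$ of the algebra, contributing a uniform additive $1/9$ to every coefficient of the expansion; the remaining $\rho^2$ and $-\tfrac{2}{3}\rho$ terms are handled similarly, after first expanding $\rho$ and $\rho^2$ into $3$-vertex dot-flags via the chain rule and then proceeding as above.

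The principal obstacle is simply the volume of bookkeeping: there are $5\cdot 29=145$ coefficients to produce, each arising from a short case analysis over the orbits of $\mathrm{Aut}(G_k)$ acting on $V(G_k)$. The rational sum-of-squares certificates displayed above were themselves extracted from a numerical SDP solution as described in Appendix \ref{app_impflags}, so no further conceptual ideas are required beyond careful case-checking. My plan is to verify a handful of representative coefficients by hand in order to confirm the method and catch any transcription errors, and to delegate the remaining computations to a short automated script.
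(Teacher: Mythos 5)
Your proposal is correct and matches the paper's treatment: the paper states that these coefficients were obtained by computer and "can easily be verified by hand, just as in the previous section," i.e., by exactly the vertex-labeling and ordered-pair case analysis you describe (the model computation of $\Delta_1(G_{10})$ in Lemma \ref{lem41}), with non-negativity following from the sum-of-squares form and Fact \ref{small_error}. Your handling of $\Delta_{11}$ via the constant term contributing a uniform $\tfrac19$ and the chain-rule expansion of $\rho$ and $\rho^2$ is likewise the intended routine verification.
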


\medskip

We can now combine these lemmas to obtain an asymptotic lower bound on the density of triangles, $K_3$, in any $\overline{K_4}$-free graph.

\begin{thm} \label{thm34flag}
We have
\[
K_3 - \sum_{i=1}^{11} c_i \Delta_i \ge \frac{1}{9} \sum_{j=1}^{29} G_j = \frac{1}{9},
\]
where
\[
\mathbf{c}=(c_i)_{i=1}^{11} = \frac{1}{2^{5}\cdot 3 \cdot 1009}\left(263984, 4720, 4432,\frac{412192}{371},\frac{72789}{112},\frac{4655105}{3392},1185, 8437, 3440, 856,1128\right).
\]
\end{thm}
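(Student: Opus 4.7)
The proof is a direct verification in the flag algebra over $\overline{K_4}$-free graphs, expressed in the basis of admissible graphs of size $5$, namely $G_1, \ldots, G_{29}$. The plan is to expand every ingredient on the left-hand side into this basis, then check that the claimed identity of coefficients holds term by term, and finally invoke non-negativity of each $\Delta_i$ to conclude the inequality.

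First I would expand $K_3$ itself into graphs of size $5$. By the chain rule (Fact \ref{chain_rule}) applied with $m=1$, $\sigma = 0$, $F_1 = K_3$, and $l = 5$, we have $K_3 = \sum_{j=1}^{29} d(K_3; G_j)\, G_j$ in the syntax of flag algebras, where the coefficient $d(K_3; G_j) = \binom{5}{3}^{-1}\, t_3(G_j)$ is the triangle density of $G_j$. These $29$ rational numbers are read off directly from the pictures in the figures. (For the analogous expansions of $K_4$ and $C_4$ used in the previous section, the same procedure was applied; here we are doing it for the target graph $K_3$.)

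Next I would substitute the explicit expansions of $\Delta_1, \ldots, \Delta_{11}$ in terms of $G_1, \ldots, G_{29}$ from the preceding three lemmas. Since every $\Delta_i$ is already written as a rational linear combination of the $G_j$, and every $c_i$ is a positive rational, the expression
\[
K_3 - \sum_{i=1}^{11} c_i \Delta_i
\]
is an explicit rational linear combination $\sum_{j=1}^{29} \beta_j G_j$. The verification step is then to check that $\beta_j = \tfrac{1}{9}$ for every $j \in \{1, 2, \ldots, 29\}$. This is a finite linear arithmetic check: for each $j$ one adds $d(K_3; G_j)$ to $-\sum_{i=1}^{11} c_i \cdot [\text{coefficient of } G_j \text{ in } \Delta_i]$ and confirms the common value $\tfrac{1}{9}$. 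The coefficients $c_i$ were produced precisely so that this system of $29$ equations in the $11$ unknowns $c_i$ admits the stated solution; once the coefficients are fixed, the check is purely mechanical (and was originally obtained by solving the corresponding SDP).

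Once the identity $K_3 - \sum_i c_i \Delta_i = \tfrac{1}{9} \sum_{j=1}^{29} G_j$ has been verified, the proof concludes in one line: since $G_1, \ldots, G_{29}$ exhaust the admissible graphs of size $5$, we have $\sum_j G_j = 1$ (as an identity in the flag algebra), and each $\Delta_i$ is an averaged square $[[p^2]]_\sigma$, hence asymptotically non-negative by the argument surrounding equation~(\ref{positivity_flags}). Combined with the positivity of each $c_i$, this yields the claimed inequality. The only real obstacle is the bookkeeping in the coefficient check: eleven rational multipliers against a $29$-row table of coefficients produce substantial arithmetic, and in practice this is handled by rational-arithmetic computer algebra, but with the $c_i$ written explicitly in the form $\tfrac{1}{2^5 \cdot 3 \cdot 1009}(\ldots)$ the common denominators line up and the verification can be carried out exactly.
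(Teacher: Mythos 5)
Your overall strategy is the same as the paper's: expand $K_3$ and each $\Delta_i$ into the $29$ admissible graphs of size $5$, compare coefficients of the $G_j$, and finish using $\sum_{j=1}^{29} G_j = 1$. The gap is in the verification step you propose. You claim that the coefficient $\beta_j$ of every $G_j$ in $K_3 - \sum_{i=1}^{11} c_i \Delta_i$ equals exactly $\tfrac19$, and that the $c_i$ were produced so that this system of $29$ equations in $11$ unknowns has the stated solution. That is false, and the mechanical check you describe would fail. For instance, $G_{17} \cong K_{2,3}$ is triangle-free, so it does not appear in the expansion of $K_3$, and among the $\Delta_i$ it occurs only in $\Delta_6$, $\Delta_9$ and $\Delta_{11}$ (with coefficients $-18$, $\tfrac{18}{5}$ and $\tfrac{1}{90}$ respectively); hence
\[
\beta_{17} \;=\; 18\,c_6 \;-\; \tfrac{18}{5}\,c_9 \;-\; \tfrac{1}{90}\,c_{11} \;\approx\; 0.127 \;>\; \tfrac19 ,
\]
a strict inequality, not an equality. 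The correct check --- and the one the paper performs --- is only that $\beta_j \ge \tfrac19$ for all $j$; this is precisely why Theorem \ref{thm34flag} is stated with ``$\ge$'', in contrast to Theorem \ref{thm44} for the $(4,3)$-problem, which is a genuine identity with explicit non-negative remainder terms. An exact-equality system would in any case be over-determined ($29$ equations, $11$ unknowns); the SDP only enforces the $29$ surplus inequalities, and several of them are slack at the optimum.

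Once the check is corrected to $\beta_j \ge \tfrac19$ for every $j$, the conclusion is immediate: the densities $d(G_j;\cdot)$ are non-negative and sum to $1$, so the coefficientwise inequality yields $K_3 - \sum_i c_i \Delta_i \ge \tfrac19 \sum_j G_j = \tfrac19$. Note also that for the theorem as stated you do not need the asymptotic non-negativity of the $\Delta_i$ or the positivity of the $c_i$ at all; those facts are invoked afterwards, in Corollary \ref{cor34flag}, to discard the $\Delta_i$ terms and deduce the lower bound on the triangle density. Your closing paragraph conflates the theorem with that corollary, though this is a minor point compared with the false coefficientwise identity on which your verification rests.
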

\begin{proof}
We begin by expanding $K_3$ into graphs of size $5$.  A straightforward calculation gives
\begin{align*}
    K_3 = \frac{1}{10} \bigg( G_1 + G_2 &+ 2 G_4 + 4 G_7 + G_{10} + 2 G_{12} + 2 G_{13} + 4 G_{14} + G_{16} + 3 G_{18} + 5 G_{19}\\
    & + 3 G_{20} + 7 G_{21} + G_{22} + G_{23} + 2 G_{24} + G_{25} + 2 G_{27} + 4 G_{28} + 10 G_{29} \bigg).
\end{align*}
We now use the lemmas to expand the squares $\Delta_i$ into the graphs $G_j$.  After summing the coefficients in the linear combination, it can easily be verified that they are all at least $\frac{1}{9}$.  Since the densities must sum to $1$, we have $\sum_{j=1}^{29} G_j = 1$, which gives the final equality.
\end{proof}

\begin{cor} \label{cor34flag}
Any $n$-vertex graph $G$ with $\alpha(G) \le 3$ satisfies
\[ \frac{t_3(G)}{ \binom{n}{3} } - \frac{47}{4036n} \sum_{v} \left( \frac{d(v)}{n-1} - \frac{1}{3} \right)^2 \ge \frac{1}{9} - o_{n \rightarrow \infty}(1). \]
\end{cor}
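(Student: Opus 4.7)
The plan is to derive the corollary directly from Theorem~\ref{thm34flag} by extracting the contribution of $\Delta_{11}$, which is the only summand whose expansion encodes individual vertex degrees of $G$. Applying the inequality of Theorem~\ref{thm34flag} evaluated on $G$ yields
\[ d(K_3; G) \ge \tfrac{1}{9} + \sum_{i=1}^{11} c_i \Delta_i(G) - o_{n\to\infty}(1). \]
For $i = 1, \ldots, 10$, each $\Delta_i$ is of the form $[[Q]]_{\sigma}$ for a sum of squares of $\sigma$-flags, so by the general discussion in Section~\ref{flag_intro} each satisfies $\Delta_i(G) \ge -o(1)$. Since the $c_i$ are positive, we may discard these terms from the right-hand side to obtain
\[ d(K_3; G) - c_{11} \Delta_{11}(G) \ge \tfrac{1}{9} - o(1). \]

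The next step is to evaluate $\Delta_{11}(G)$ explicitly. Expand $(\rho - 1/3)^2 = \rho \cdot \rho - (2/3)\rho + 1/9$. For each vertex $v \in V(G)$, let $F_v$ denote the \emph{dot}-flag obtained by labelling $v$ with $1$. Then $p_{dot}(\rho; F_v) = d(v)/(n-1)$, while Fact~\ref{small_error} applied with $m=2$ gives $p_{dot}(\rho \cdot \rho; F_v) = p_{dot}(\rho; F_v)^2 + O(1/n)$. Completing the square therefore yields
\[ p_{dot}\!\left( (\rho - 1/3)^2; F_v \right) = \left( \frac{d(v)}{n-1} - \frac{1}{3} \right)^2 + O(1/n). \]
Since the averaging operator $[[\,\cdot\,]]_{dot}$ applied to $G$ amounts to taking the mean of the corresponding dot-flag density over the labelling of a vertex $v \in V(G)$, we conclude
\[ \Delta_{11}(G) = \frac{1}{n} \sum_{v \in V(G)} \left( \frac{d(v)}{n-1} - \frac{1}{3} \right)^2 + O(1/n). \]

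Finally, the coefficient simplifies as $c_{11} = \tfrac{1128}{2^5 \cdot 3 \cdot 1009} = \tfrac{47}{4036}$. Substituting this value and the expression for $\Delta_{11}(G)$ into the bound above, using $d(K_3; G) = t_3(G)/\binom{n}{3}$ (triangles coincide with induced copies of $K_3$), and absorbing the $O(1/n)$ term into the $o(1)$ slack, produces the claimed inequality. There is no substantive obstacle here: identifying the quadratic with the degree-variance sum is the only nontrivial ingredient, and this follows immediately from Fact~\ref{small_error} once one expands the square. The real work was already done in proving Theorem~\ref{thm34flag}; the value of this corollary is that it retains a refined error term, which will be useful for the stability analysis in the following subsection.
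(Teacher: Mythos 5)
Your proposal is correct and follows essentially the same route as the paper: the paper's proof likewise discards $\Delta_1,\dots,\Delta_{10}$ as asymptotically non-negative squares and then ``interprets combinatorially'' the remaining term $c_{11}\Delta_{11} = \frac{47}{4036}\bigl[\bigl[(\rho-\tfrac13)^2\bigr]\bigr]_{dot}$. You have simply written out that combinatorial interpretation (via Fact~\ref{small_error} and $p_{dot}(\rho;F_v)=d(v)/(n-1)$) in more detail than the paper does, and your computation, including $c_{11}=47/4036$, is right.
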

\begin{proof}
Since the $\Delta_i$ are squares of flags, they are asymptotically non-negative.  Hence discarding the terms for $\Delta_i$, $1 \le i \le 10$, maintains the inequality.  This gives $K_3 - \frac{47}{4036} \left[ \left[ \left( \rho - \frac{1}{3} \right)^2 \right]\right]_{dot} \ge \frac{1}{9} - o_{n \rightarrow \infty}(1)$.  Interpreting these terms combinatorially gives the corollary.
\end{proof}

\subsection{The stability analysis}

In order to derive a stability result for the $(3,4)$-problem, we use the following well-known result of Andr\'{a}sfai, Erd\H{o}s and S\'{o}s \cite{andrasfai}.

\begin{thm} \emph{(Andr\'{a}sfai, Erd\"{o}s, S\'{o}s)} \label{aes}
A $K_r$-free graph on $n$ vertices that has minimum degree larger
than $\frac{3r-7}{3r-4} n$ must be $(r-1)$-partite.
\end{thm}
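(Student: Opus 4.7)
My plan is to prove the theorem by induction on $r$, with the base case $r = 3$ being Andr\'asfai's original result: a triangle-free graph $G$ with $\delta(G) > \frac{2}{5} n$ is bipartite. I would argue this by contradiction---suppose $G$ is not bipartite and let $C = v_1 v_2 \cdots v_{2k+1}$ be a shortest odd cycle; triangle-freeness forces $k \geq 2$. The key lemma is that every $u \in V(G)$ has at most two neighbors on $C$: if $u$ had three neighbors on $C$, then the three arc-lengths between them would all have to be even (otherwise a length-$2$ detour through $u$ produces a strictly shorter odd cycle), contradicting the fact that they sum to the odd number $2k+1$. Double counting edges between $V(C)$ and $V(G)$ then yields $\sum_{c \in V(C)} d_G(c) \leq 2n$, while the degree hypothesis forces $\sum_{c \in V(C)} d_G(c) > \frac{2}{5} n (2k+1) \geq 2n$ whenever $k \geq 2$, the desired contradiction.

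For the inductive step, assume the theorem holds for $r - 1$, and let $G$ be $K_r$-free with $\delta(G) > \frac{3r-7}{3r-4} n$. Fix any vertex $v$ and set $H = G[N(v)]$. Then $H$ is $K_{r-1}$-free, since any $K_{r-1}$ in $H$ together with $v$ would give a $K_r$ in $G$. For any $u \in N(v)$, the elementary bound $d_H(u) \geq d_G(u) + d_G(v) - n$ combined with the hypothesis $d_G(u) > \frac{3r-7}{3r-4}n$ yields $d_H(u) > d_G(v) - \frac{3n}{3r-4}$, and a short algebraic manipulation shows this exceeds $\frac{3(r-1)-7}{3(r-1)-4}\, d_G(v) = \frac{3r-10}{3r-7} |V(H)|$. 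The threshold $\frac{3r-7}{3r-4}$ is calibrated precisely so that this inequality holds, and so by the inductive hypothesis each $G[N(v)]$ is $(r-2)$-partite.

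The main obstacle will be upgrading the local $(r-2)$-partitions of every neighborhood into a single global $(r-1)$-partition of $G$. My approach is to show that under our hypotheses the non-adjacency relation on $V(G)$ is transitive: if $u \not\sim v$ and $v \not\sim w$ with $u \neq w$, then $u \not\sim w$. Once this is established, the equivalence classes of non-adjacency form a partition of $V(G)$ into independent sets, and choosing one representative from each class yields a clique in $G$, whose size is at most $r - 1$ by $K_r$-freeness, delivering the desired $(r-1)$-partition. To prove transitivity I would argue by contradiction---assuming $u \sim w$, the degree hypothesis forces $N(u) \cap N(v) \cap N(w)$ to be large, and the inductive $(r-2)$-partite structure on $G[N(v)]$ should then force this intersection to contain a $K_{r-2}$, which together with the edge $uw$ completes a forbidden $K_r$ in $G$. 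This last synthesis is the most delicate part of the argument and is where I expect the full strength of the degree hypothesis to be needed.
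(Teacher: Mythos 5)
You never actually compare against anything in the paper here, because the paper does not prove this theorem: it is quoted as a known result of Andr\'asfai, Erd\H{o}s and S\'os \cite{andrasfai} and used as a black box in Section 5. So your proposal must stand on its own, and it does not. Your base case is essentially the standard argument and is fine (with the small caveat that the ``all three arcs are even'' step needs triangle-freeness to guarantee every arc has length at least $2$, so that the length-$2$ detour is strictly shorter), and your calibration check that $G[N(v)]$ is $K_{r-1}$-free with minimum degree exceeding $\frac{3r-10}{3r-7}|N(v)|$ is correct. The fatal problem is the globalization step: the claim that non-adjacency is transitive is equivalent to saying $G$ is \emph{complete} multipartite, which is much stronger than $(r-1)$-partite and is simply false under the stated hypotheses. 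For $r=3$, take $K_{m,m}$ minus a perfect matching: it is triangle-free with minimum degree $m-1>\frac{2}{5}(2m)$ for large $m$, yet if $u,v$ lie on the same side and $v'$ is the deleted matching-partner of $v$, then $u\not\sim v$, $v\not\sim v'$, but $u\sim v'$. For general $r$, the complete $(r-1)$-partite graph with nearly equal parts minus a matching between two parts is $K_r$-free, has minimum degree about $\frac{r-2}{r-1}n-1>\frac{3r-7}{3r-4}n$, and again violates transitivity.

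Consequently the plan for proving transitivity cannot be repaired: in these counterexamples there is no $K_r$ at all, so no argument can manufacture the forbidden clique you hope to complete from $N(u)\cap N(v)\cap N(w)$ and the edge $uw$. Moreover, even the preliminary step fails for small $r$: the bound $|N(u)\cap N(v)\cap N(w)|\ge 3\delta(G)-2n>\frac{3r-13}{3r-4}n$ is vacuous (negative) for $r=3$ and $r=4$, so the ``large common neighborhood'' is not available precisely in the cases the paper needs. The honest difficulty of Andr\'asfai--Erd\H{o}s--S\'os is exactly the passage from local information (each neighborhood $(r-2)$-colorable) to a global $(r-1)$-coloring; local colorability of neighborhoods does not imply global colorability, and the known proofs take a different route (analyzing a $K_r$-free graph of chromatic number at least $r$, e.g.\ via a critical subgraph or an induced odd cycle of length at least $5$ together with a suitable clique attached to it, and showing some vertex would then have degree at most $\frac{3r-7}{3r-4}n$). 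As written, your inductive step therefore has a genuine gap, and the specific bridge you propose is not just unproven but false.
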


Applying this to the complement of a graph with $r = 4$, we find
that a graph $G$ on $n$ vertices with $\alpha(G) \le 3$ and maximum
degree less than $\frac{3}{8}n$ must be spanned by three cliques.
The following stability result follows.

\begin{prop} \label{34stability}
Suppose $0 < \varepsilon < \frac{1}{30}$.  There exists $n_0 = n_0(\varepsilon)$ such that any graph $G$ on $n \ge n_0$ vertices with $\alpha(G) \le 3$ and $t_3(G) < \left( \frac{1}{9} + \varepsilon^5 \right) \binom{n}{3}$ contains an induced subgraph $G' \subset G$ on at least $\left(1 - 100 \varepsilon^3 \right) n$ vertices that is spanned by three cliques of size between $\left( \frac{1}{3} - 3 \varepsilon \right)n $ and $\left( \frac{1}{3} + \varepsilon \right)n$.  Moreover, every vertex in $G'$ sends at most $4 \varepsilon n$ edges outside its clique.
\end{prop}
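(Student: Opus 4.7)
The plan is to leverage Corollary \ref{cor34flag}, which already extracts degree concentration from the triangle density bound, and then apply the complementary form of the Andr\'asfai--Erd\H{o}s--S\'os theorem (Theorem \ref{aes}) to a carefully chosen induced subgraph.

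First I would apply Corollary \ref{cor34flag} directly to $G$. Since $t_3(G) / \binom{n}{3} < \tfrac{1}{9} + \varepsilon^5$, the corollary yields
\[
\frac{47}{4036 n} \sum_{v \in V(G)} \left( \frac{d(v)}{n-1} - \frac{1}{3} \right)^2 \le \varepsilon^5 + o_{n\to\infty}(1),
\]
so $\sum_v (d(v)/(n-1) - 1/3)^2 \le \tfrac{4036}{47} \varepsilon^5 n + o(n) < 86 \varepsilon^5 n$ for $n$ large. Let $B = \{ v : |d(v)/(n-1) - 1/3| > \varepsilon \}$ be the set of vertices with ``bad'' degree; a Markov-type argument (dividing the above sum by $\varepsilon^2$) gives $|B| < 86\varepsilon^3 n$.

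Next I would set $G' = G[V(G) \setminus B]$, which has at least $(1 - 86\varepsilon^3)n \ge (1 - 100\varepsilon^3)n$ vertices and still satisfies $\alpha(G') \le 3$. For every $v \in V(G')$, the degree in $G'$ satisfies $d_{G'}(v) \le d_G(v) \le (\tfrac{1}{3} + \varepsilon)(n-1) < (\tfrac{1}{3} + \varepsilon)n$. Because $\varepsilon < \tfrac{1}{30}$, a short calculation shows $(\tfrac{1}{3} + \varepsilon)n < \tfrac{3}{8}(1 - 100\varepsilon^3)n \le \tfrac{3}{8}|G'|$. Hence the \emph{complement} of $G'$ is a $K_4$-free graph on $|G'|$ vertices with minimum degree exceeding $\tfrac{5}{8}|G'| = \tfrac{3 \cdot 4 - 7}{3 \cdot 4 - 4} |G'|$, so Theorem \ref{aes} applies and $\overline{G'}$ is tripartite. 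Equivalently, $G'$ is spanned by three cliques $A_1, A_2, A_3$.

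Finally I would pin down the sizes. Each vertex $v \in A_i$ satisfies $|A_i| - 1 \le d_{G'}(v) \le (\tfrac{1}{3} + \varepsilon)n$, giving the upper bound $|A_i| \le (\tfrac{1}{3} + \varepsilon)n$. Since $|A_1| + |A_2| + |A_3| \ge (1 - 86\varepsilon^3)n$, each part also satisfies $|A_i| \ge (1 - 86\varepsilon^3)n - 2(\tfrac{1}{3} + \varepsilon)n \ge (\tfrac{1}{3} - 3\varepsilon)n$. For any $v \in A_i$, the number of edges from $v$ going outside its clique is $d_{G'}(v) - (|A_i| - 1) \le (\tfrac{1}{3} + \varepsilon)n - (\tfrac{1}{3} - 3\varepsilon)n + 1 \le 4\varepsilon n$ (taking $n_0$ large enough to absorb the $+1$).

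The one delicate point, which is really the only obstacle beyond routine bookkeeping, is verifying that the max-degree condition on $G'$ is strictly below the AES threshold $\tfrac{3}{8}|G'|$: the combination of the degree-concentration error ($\varepsilon n$) and the loss from deleting $B$ (an $\varepsilon^3 n$ factor on $|G'|$) must fit inside the gap $\tfrac{3}{8} - \tfrac{1}{3} = \tfrac{1}{24}$, and this is exactly why the hypothesis $\varepsilon < \tfrac{1}{30}$ appears. Everything else reduces to arithmetic with the explicit constants.
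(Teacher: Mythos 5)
Your proposal is correct and follows essentially the same route as the paper: apply Corollary \ref{cor34flag} to get degree concentration, discard the $O(\varepsilon^3 n)$ vertices of atypical degree, apply Theorem \ref{aes} to the complement of the remaining induced subgraph, and finish with the same arithmetic for the clique sizes and the $4\varepsilon n$ bound. The only (immaterial) deviations are that you define the bad set two-sidedly and track the sharper constant $86$ in place of the paper's $100$.
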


\begin{proof}
We have from Corollary \ref{cor34flag} that for any graph $G$ on $n$ vertices with $\alpha(G) \le 3$,
\[ \frac{t_3(G)}{\binom{n}{3}} - \frac{47}{4036n} \sum_v \left( \frac{d(v)}{n-1} - \frac{1}{3} \right)^2 \ge \frac{1}{9} - o_{n \rightarrow \infty}(1). \]
In particular, if $t_3(G) < \left( \frac{1}{9} + \varepsilon^5 \right) \binom{n}{3}$, and $n$ is large enough, then
\[ \sum_v \left( \frac{d(v)}{n-1} - \frac{1}{3} \right)^2 < 100 \varepsilon^5 n. \]

Let $B = \left\{ v : d(v) \ge \left( \frac{1}{3} + \varepsilon \right) n \right\}$.  Then $|B| \varepsilon^2 < \sum_v \left( \frac{d(v)}{n-1} - \frac{1}{3} \right)^2 < 100 \varepsilon^5 n$, and so $|B| < 100 \varepsilon^3 n$.

Let $G'$ be the induced subgraph on $V(G) \setminus B$.  As claimed, $G'$ has $n' \ge (1 - 100 \varepsilon^3)n$ vertices.  Moreover, since $\varepsilon < \frac{1}{30}$ the maximum degree $\Delta(G')$ is bounded by
\[ \Delta(G') < \left( \frac{1}{3} + \varepsilon \right) n \le \frac{ \frac{1}{3} + \varepsilon }{1 - 100 \varepsilon^3} n' < \frac{3}{8} n'. \]
Hence we can apply Theorem \ref{aes} in its complementary form to deduce that $G'$ is spanned by three cliques.

Since $\Delta(G') < \left( \frac{1}{3} + \varepsilon \right)n$, we
deduce that the largest clique in $G'$ has size at most $\left(
\frac{1}{3} + \varepsilon \right)n$.  This implies that the smallest
clique has size at least $\left( 1 - 100 \varepsilon^3 \right) n - 2
\left( \frac{1}{3} + \varepsilon \right) n > \left( \frac{1}{3} - 3
\varepsilon \right) n$ (using the bound $\varepsilon <
\frac{1}{30}$).  This implies that every vertex in $G'$ can send at
most $\left( \frac{1}{3} + \varepsilon \right) n - \left(
\frac{1}{3} - 3 \varepsilon \right) n = 4 \varepsilon n$ edges
outside its own clique.

Finally, consider the vertices in $B$.  If any vertex $v \in B$ is
adjacent to all vertices in one of the cliques $C_i$, and does not
have more than $4 \varepsilon n$ edges outside $C_i$, then we can
add $v$ to $C_i$ without affecting any of the previous bounds.  Thus
the only vertices left in $B$ are either those adjacent to one
clique, but with too many neighbors outside the clique, or those
with a non-neighbor in each of the three cliques.
\end{proof}

\medskip

This stability result allows us to, for large values of $n$, deduce the exact value of the $(3,4)$-problem, and also to characterise all extremal graphs.  Recall that we define $f(n,k,l)$ to be the minimum of $t_k(G)$ over all graphs $G$ on $n$ vertices with $\alpha(G) \le l-1$.

\begin{thm} \label{34exact}
There exists $n_0$ such that for every $n \ge n_0$, $f(n,3,4) = \binom{\left \lfloor n / 3 \right \rfloor}{3} + \binom{\left \lfloor (n+1) / 3 \right \rfloor}{3} + \binom{\left \lfloor (n+2) / 3 \right \rfloor}{3}$.  Moreover, if $G$ is a graph on $n \ge n_0$ vertices with $t_3(G) = f(n,3,4)$, then $G$ contains $\overline{T_{n,3}}$, a disjoint union of three nearly-equal cliques.
\end{thm}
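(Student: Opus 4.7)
The upper bound $f(n,3,4) \le \binom{\lfloor n/3\rfloor}{3} + \binom{\lfloor (n+1)/3 \rfloor}{3} + \binom{\lfloor (n+2)/3 \rfloor}{3}$ is immediate from taking $\overline{T_{n,3}}$, which has independence number $3$. For the matching lower bound and the structural claim, let $G$ be an extremal $n$-vertex graph; then $t_3(G) = \tfrac{1}{9}\binom{n}{3}(1 + O(1/n))$. Choosing a sufficiently small $\varepsilon > 0$ and $n$ large, Proposition \ref{34stability} produces a decomposition $V(G) = C_1 \cup C_2 \cup C_3 \cup B$ in which each $C_i$ is a clique of size in $[(\tfrac{1}{3} - 3\varepsilon)n,\, (\tfrac{1}{3} + \varepsilon)n]$, $|B| \le 100\varepsilon^3 n$, and every vertex in $V(G) \setminus B$ sends at most $4\varepsilon n$ edges outside its own $C_i$.

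The crux of the proof is upgrading this approximate structure to an exact three-clique partition of $G$. For each $v \in B$, set $X_i^v = C_i \setminus N(v)$. Were all three $X_i^v$ non-empty, every transversal $(u_1, u_2, u_3) \in X_1^v \times X_2^v \times X_3^v$ would, together with $v$, form a $4$-set, and $\alpha(G) \le 3$ would force an edge between some pair $\{u_i, u_j\}$. Since each cross-clique pair contains at most $4\varepsilon n \cdot \min(|X_i^v|, |X_j^v|)$ edges of $G$ by the stability conclusion, double-counting transversals against covering cross-edges forces at least two of the three $|X_i^v|$ to be of size $O(\varepsilon n)$. I will then argue, using the minimality of $t_3(G)$, that at least one of these $|X_i^v|$ must in fact equal zero: any remaining non-edge $\{v, u\}$ with $u \in X_i^v$ and $|X_i^v|$ small permits a local modification --- adding the missing $v$-$C_i$ edges and deleting selected cross-clique neighbors of $v$ --- that either strictly decreases $t_3(G)$ or introduces an independent $4$-set, contradicting extremality or admissibility. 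Absorbing each $v \in B$ into the clique to which it is fully adjacent yields the desired partition $V(G) = V_1 \sqcup V_2 \sqcup V_3$, with each $V_i$ a clique in $G$ of size $(\tfrac{1}{3} + O(\varepsilon))n$.

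Given the clique partition, every triangle of $G$ either lies entirely inside some $V_i$ or uses a cross-clique edge; hence
\[
t_3(G) \;\ge\; \binom{|V_1|}{3} + \binom{|V_2|}{3} + \binom{|V_3|}{3}.
\]
By strict convexity of $x \mapsto \binom{x}{3}$, this sum is uniquely minimized (up to permutation) over non-negative integer triples summing to $n$ by the multiset $\{\lfloor n/3 \rfloor,\, \lfloor (n+1)/3 \rfloor,\, \lfloor (n+2)/3 \rfloor\}$, achieving exactly the claimed value of $f(n,3,4)$. The equality $t_3(G) = f(n,3,4)$ therefore forces both the $|V_i|$ to take these sizes and the cross-clique edges to produce no triangles, so $G$ contains $\overline{T_{n,3}}$ as a spanning subgraph. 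The main obstacle is the swap step in the preceding paragraph: the naive move ``replace $N(v)$ by $C_i \setminus \{v\}$'' need not reduce triangles, and resolving this requires a careful choice of which cross-clique edge incident to $v$ to delete, balancing the gain in triangles against the need to preserve $\alpha(G) \le 3$ throughout --- an analysis made tractable by the tight quantitative grip that Proposition \ref{34stability} provides on the local neighborhoods of bad vertices.
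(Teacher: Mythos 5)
Your outline follows the paper up through Proposition \ref{34stability}, and your endgame (once a spanning partition into three cliques is in hand, bound $t_3(G)$ below by $\sum_i \binom{|V_i|}{3}$ and invoke convexity to force balanced sizes and no cross-clique triangles) is fine, indeed slightly cleaner than the paper's final size comparison. But the heart of the theorem is eliminating the bad vertices, and there you have a genuine gap which you yourself flag: the ``local modification'' (add the missing $v$--$C_i$ edges, delete selected cross-clique neighbours of $v$) is never specified, and as you note the naive swap need not decrease the triangle count, while deleting edges at $v$ threatens $\alpha(G)\le 3$. Asserting that a careful choice ``either strictly decreases $t_3(G)$ or introduces an independent $4$-set'' is exactly the statement that needs proof; without it the argument does not close. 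The paper avoids this balancing act entirely with a different device: it first proves a \emph{claim} that in an extremal graph every vertex lies in at most $\binom{|C_3|+|B|}{2}$ triangles, by deleting $v$ and inserting a fresh vertex $v'$ with $N(v')=C_3\cup B$ (this keeps $\alpha\le 3$ because the non-neighbours of $v'$ are covered by the two cliques $C_1,C_2$, and caps the triangle count of $v'$ outright), and then shows by a short case analysis (fully adjacent to a clique but with $\ge 2\varepsilon n$ outside neighbours; or a non-neighbour in each clique, split according to whether $v$ and $x$ have a common non-neighbour in the second clique) that any bad vertex would exceed this cap. So in the paper extremal graphs have no bad vertices at all, and no absorption is needed.

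Two smaller points. First, your transversal double-count gives that at least two of the sets $X_i^v$ have size $O(\sqrt{\varepsilon}\,n)$, not $O(\varepsilon n)$ (if two of them have size $cn$ the count only forces $c^2 = O(\varepsilon)$); this is harmless but should be stated correctly since you later lean on the smallness quantitatively. Second, even granting that every $v\in B$ becomes fully adjacent to some clique, your absorption step is incomplete: two non-adjacent bad vertices fully adjacent to the same $C_i$ cannot both be added to it, so the resulting parts need not be cliques. This issue simply does not arise in the paper's route, since bad vertices are shown not to exist in an extremal graph.
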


\begin{proof} First note that $G = \overline{T_{n,3}}$ has $\alpha(G) \le 3$ and so we have the upper bound $f(n,3,4) \le t_3(\overline{T_{n,3}}) = \binom{ \left \lfloor n / 3 \right \rfloor}{3} + \binom{ \left \lfloor (n+1) / 3 \right \rfloor}{3} + \binom{ \left \lfloor (n+2) / 3 \right \rfloor}{3} \sim \frac{1}{9} \binom{n}{3}$ - note that this upper bound holds for all $n$.

To obtain a matching lower bound, we apply the stability result from Proposition \ref{34stability}.  Take $\varepsilon = \frac{1}{100}$, and let $n \ge n_0 ( \varepsilon )$ be sufficiently large.  Suppose $G$ is an extremal graph on $n \ge n_0$ vertices.  In particular, we have $t_3(G) < \left( \frac{1}{9} + \varepsilon^5 \right) \binom{n}{3}$ for $n$ large enough.   From the proof of the proposition, we know that there is a set $B$ of at most $100 \varepsilon^3 n$ `bad' vertices, and the remainder of the vertices are in three cliques, with at most $4 \varepsilon n$ edges to the other cliques.  Label the cliques in order of size, say $|C_1| \ge |C_2| \ge |C_3|$.  We will show that an extremal graph cannot have any bad vertices, so $G$ is spanned by the three cliques.  We begin with a simple observation.

\textbf{Claim:}  Every vertex $v \in V(G)$ is in at most
$\binom{|C_3| + |B| }{2}$ triangles.

\textbf{Proof:}      Suppose some vertex $v$ were in more triangles.
Delete $v$, and add a new vertex $v'$ with $N(v') = C_3 \cup B$.
This does not increase the independence number, and $v'$ is in at
most $\binom{|C_3| + |B|}{2}$ triangles.  Hence we have decreased
the number of triangles in $G$, which contradicts the minimality of
$G$. Note that $\binom{|C_3| + |B|}{2} \le \binom{|C_3|}{2} + |B|n
\le \binom{|C_3|}{2} + 100 \varepsilon^3 n^2 = \binom{|C_3|}{2} +
\varepsilon^2 n^2$.

\medskip

Now consider a potential bad vertex $v \in B$.  There are two reasons $v$ could be bad:

\medskip

\textbf{Case 1:}  $v$ is adjacent to all vertices of one of the
cliques $C_i$, but has more than $4 \varepsilon n$ neighbors in the
other cliques.

If $v$ has more than $4 \varepsilon n$ neighbors in the other
cliques, it must have at least $2 \varepsilon n$ neighbors in one of
them.  Note that every pair of these neighbors creates a triangle
with $v$.  Thus $v$ is in at least $\binom{|C_i|}{2} + \binom{ 2
\varepsilon n}{2} > \binom{|C_3|}{2} + \varepsilon^2 n^2$ triangles,
which contradicts our earlier claim.  Hence this case cannot occur.

\medskip

\textbf{Case 2:} $v$ has a non-neighbor in each of the three
cliques.

Let $\overline{d}_i = |C_i \setminus N(v)|$ be the number of non-neighbors of $v$ in the $i$th clique.  Consider the cliques in increasing order of these values, that is, suppose $\overline{d}_{i_1} \le \overline{d}_{i_2} \le \overline{d}_{i_3}$.  Let $x$ be a non-neighbor of $v$ in $C_{i_1}$.

\textbf{Case 2a:} Every vertex $y \in C_{i_2}$ is adjacent to one of
$\{ v, x \}$.

Since $x$ has at most $4 \varepsilon n$ neighbors in $C_{i_2}$, it follows that $\overline{d}_{i_1} \le \overline{d}_{i_2} \le 4 \varepsilon n$.  Counting only the neighbors of $v$ in the cliques $C_{i_1}$ and $C_{i_2}$, we see that $v$ is in at least $\binom{|C_{i_1}| - 4 \varepsilon n}{2} + \binom{|C_{i_2}| - 4 \varepsilon n}{2} \ge 2 \binom{|C_3| - 4 \varepsilon n }{2}$ triangles.  We have $2 \binom{|C_3| - 4 \varepsilon n}{2} \approx |C_3|^2 - 8 \varepsilon |C_3| n + 16 \varepsilon^2 n^2$.  Since $|C_3| \ge \left ( \frac{1}{3} - 3 \varepsilon \right) n$ and $\varepsilon = \frac{1}{100}$, this is greater than $\binom{|C_3|}{2} + \varepsilon^2 n^2 < \frac{1}{2} |C_3|^2 + \varepsilon^2 n^2$, which contradicts the earlier claim.

\medskip

\textbf{Case 2b:} $v$ and $x$ have a common non-neighbor in
$C_{i_2}$, say $y$.

In this case, as $\alpha(G) \le 3$, every vertex in $C_{i_3}$ must
be adjacent to one of $\{ v, x, y \}$.  Since $x$ and $y$ have at
most $4 \varepsilon n$ neighbors in $C_{i_3}$, it follows that
$\overline{d}_{i_1} \le \overline{d}_{i_2} \le \overline{d}_{i_3}
\le 8 \varepsilon n$.  Thus $v$ is in at least $\binom{ |C_{i_1}| -
8 \varepsilon n}{2} + \binom{ |C_{i_2}| - 8 \varepsilon n}{2} +
\binom{ |C_{i_3}| - 8 \varepsilon n}{2} \ge 3 \binom{ |C_3| - 8
\varepsilon n}{2} \approx \frac{3}{2} |C_3|^2 - 24 \varepsilon |C_3|
n + 96 \varepsilon^2 n^2$ triangles.  Again, given our bounds on
$|C_3|$ and $\varepsilon$, this is greater than $\binom{|C_3|}{2} +
\varepsilon^2 n^2$, which gives a contradiction.

\medskip

Thus we have shown that in an extremal graph, there are no bad vertices, and so the three cliques span all $n$ vertices and $|B| = 0$.  Now note that any vertex in $C_1$ is in $ \binom{ |C_1| - 1}{2}$ triangles from within $C_1$ alone.  By the earlier claim, we must have $\binom{|C_1| - 1}{2} \le \binom{|C_3| + |B|}{2} = \binom{|C_3|}{2}$, from which it follows that $|C_1| - 1\le |C_3|$.  Thus $|C_3| \le |C_2| \le |C_1| \le |C_3| + 1$, which shows that the cliques must be nearly equal in size.

\medskip

This implies that $\overline{T_{n,3}} \subset G$, and so it follows that for any graph $G$ on $n$ vertices with $\alpha(G) \le 3$, we must have $t_3(G) \ge t_3( \overline{T_{n,3}})$.  Thus $f(n,3,4) = t_3( \overline{T_{n,3}})$.  Moreover, if $G$ is an extremal graph, then since we have equality, there can be no triangles with vertices from different cliques.  This means that each vertex can have at most one neighbor in each of the two other cliques; in other words, the bipartite graphs between cliques are (partial) matchings.  These matchings must be such that there is no triangle with one vertex from each clique.  However, the extremal graph is not unique, as there are many possibilities for the matchings.

\end{proof}

\section{Concluding Remarks} \label{concluding}

In this paper, we apply the techniques of flag algebras, combined with stability arguments, to solve the Erd\H{o}s problem for the cases $(k,l) = (4,3)$ and $(3,4)$.  In particular, we show that Nikiforov's construction of a blow-up of $C_5$ is optimal for the $(4,3)$-problem, while Erd\H{o}s' conjecture still holds for the $(3,4)$-problem.

We have also run the SDP problem for larger cases, and our calculations suggests that Erd\H{o}s' conjecture remains valid for the $(3,5)$- and $(3,6)$-problems.  Moreover, it would appear that a blow-up of $C_5$ is also optimal for the $(5,3)$-problem.  Since this paper is already quite long, we decided not to process the SDP results to find rational solutions.  However, after doing so it should be possible to develop stability results similar to those above, and thus to determine the exact solution to these problems.

Note that the extremal graphs we have found are all blow-ups of small graphs.  In particular, the graphs are Ramsey graphs.  The construction of $l-1$ cliques is a blow-up of an independent set of size $l-1$, which is the $R(2,l)$ Ramsey graph.  On the other hand, $C_5$ is the $R(3,3)$ Ramsey graph.  One may therefore ask if, for large $n$, the solution of the $(k,l)$-problem is always a blow-up of an $R(s,t)$ Ramsey graph, where $s$ and $t$ depend only on $k$ and $l$.  Solving this problem in general appears to be quite difficult.

A simpler question, first asked by Nikiforov, is to determine the extremal graphs for the $(k,l)$-problem as one parameter is fixed and the other grows.  In particular, it remains to determine for which values of $l$ a disjoint union of $l-1$ cliques remains optimal for the $(3,l)$-problem.  In light of the above results, one could also study for which values of $k$ the blow-up of $C_5$ is optimal for the $(k,3)$-problem.  Proofs by flag algebras are infeasible for large values of $k$ and $l$, as the search space and running time grow exponentially in these parameters.  It would be of great interest to develop new techniques to attack this problem.

\newpage 

\appendix

\section{Implementation of flag algebras} \label{app_impflags}

In Section \ref{flag_intro}, we covered the basics of the theory
behind flag algebras; here we discuss the actual
implementation of the method.  In particular, we will discuss how
to set up the SDP problem, and then find a verifiable proof. The
main steps are:
\begin{enumerate}
\item Identifying the types $\sigma_i$ to use, and finding a suitable size $t$ for the expansion of the positive semi-definite matrices.
\item Finding a verifiable (e.g. rational) solution that leads to a proof.
\item \textit{(Optional)} Writing the positive semi-definite matrix as a sum of squares.
\end{enumerate}
We shall address each of these steps in turn.

\medskip

\noindent {\sc Identifying types:}

The process of identifying the necessary types $\sigma_i$ and
finding a suitable size $t$ essentially comes down to
trial-and-error.  Note that whatever choice of types and size we
make will result in an SDP problem as outlined above, which can then
be solved to provide \emph{some} bound for the extremal problem.  In
order to determine whether or not this is the \emph{right} bound, we
need a conjecture on what the bound should be - this typically comes
from a construction.  We then seek to keep improving the flag
algebra results until they match the conjectured bound.

To produce the flag algebra results, we start with the initial size
$t$ to be the size of the subgraph $J$, the density of which we are
trying to bound. Given $t$, we produce a list of all admissible
graphs $G$ of size $t$.  We then consider all possible types of size
suitable for expansion into graphs of size $t$. Recall that if we
have a type of size $k$, and use flags of size $l \ge k+1$, then to compute
a product of two flags, we must expand into graphs of size at least
$2l - k \ge k+2$. This restricts the size of types and flags we can use -
our types can be of size at most $t-2$, and given a type of size
$k$, we choose the largest possible size of flags $l$ that satisfies
$2l - k \le t$.

For each of our types $\sigma_i$, with its associated list of flags
$\mathcal{F}^{\sigma_i}_{l_i}$, we compute the product of each pair
of flags, which gives the corresponding block in the SDP problem.
This provides the formulation of the SDP problem, which can then be
solved numerically.

If the numerical bound is less than the conjecture, then we do not
have enough types to solve the problem.  Thus we increase the size
$t$, which allows the use of larger types, and repeat the process.
If the numerical bound matches the conjecture, we then have enough
types to solve the problem, and can proceed to finding a verifiable
proof.

\medskip

At this stage, we have the block variable matrices $Q_i$ for the SDP
problem. However, as they were computed numerically, they are
subject to rounding error, and thus we cannot be certain that they
are truly positive semi-definite matrices, nor that the bound for
the extremal problem they provide is exactly equal to the
conjectured bound.  To have a rigorous proof, it is necessary to
find solution matrices $Q_i$ whose entries are known exactly - they
will ideally be rational. It can then be independently verified that
these matrices satisfy the conditions necessary to prove the desired
result.  We now outline some of the steps that can be taken to find
such a solution.

\newpage

\noindent {\sc Finding a verifiable solution:}

Typically, the space of solutions will be a high-dimensional space,
with many degrees of freedom for the entries of the matrices $Q_i$.
To try to force the solution towards rational entries, we seek to
reduce the dimension of the search space.  There are three methods
we can apply: reducing the size of the block variables, identifying
natural eigenvectors, and changing the basis to introduce
zero-entries.

Recall that for each type $\sigma_i$ we have the associated block variable $Q_i$.
In identifying which types to use, we added all possible types until
we obtained the right bound.  However, it is possible, and even
likely, that some of the types are unnecessary.  Given a type
$\sigma$, we remove it from the SDP problem, and run the SDP solver
again.  If we still obtain the correct bound, then we know the type
$\sigma$ was unnecessary.  If instead this results in a worse bound,
then we keep $\sigma$, and try removing a different type.  In this
way we arrive at a minimal set of necessary types, thus reducing the
number of block variables in the SDP problem.

Given a set of minimal types, there is a further reduction possible.
Every type $\sigma$ has the natural group $\Gamma_{\sigma}$ of
automorphisms of the underlying graph $\sigma_0$.  The group
$\Gamma_{\sigma}$ acts on the algebra $\mathcal{A}^{\sigma}$ by
relabeling the flags according to the automorphism.  We can then
decompose $\mathcal{A}^{\sigma} = \mathcal{A}_+^{\sigma} \oplus
\mathcal{A}_-^{\sigma}$ into a positive and negative part, where
$\mathcal{A}_+^{\sigma}$ consists of all elements invariant under
$\Gamma_{\sigma}$, while $\mathcal{A}_-^{\sigma} \stackrel{def}{=}
\left \{ f \in \mathcal{A}^{\sigma} : \sum_{\gamma \in
\Gamma_{\sigma}} \gamma f = 0 \right \}$.  For example, given the
type and flags of Figure 3, both labelings of the vertices of
$\sigma$ give rise to automorphisms, and so $\Gamma_{\sigma}$ is the
symmetric group on two elements.  One can verify that $F_3 \in
\mathcal{A}_+^{\sigma}$, $F_1 + F_2 \in \mathcal{A}_+^{\sigma}$, and
$F_1 - F_2 \in \mathcal{A}_-^{\sigma}$.

\begin{figure}[H]
\centering
\parbox{1in}{
\centering
\begin{tikzpicture}
  [scale=0.6,auto=left,every node/.style={circle, draw, fill=black!50,inner sep=0pt, minimum width=4pt}]
  \node (n1) at (180:1 cm) [label=left:$1$]{};
  \node (n2) at (360:1 cm) [label=right:$2$]{};

\end{tikzpicture}
\caption*{$\sigma$}
}
\parbox{1in}{
\centering
\begin{tikzpicture}
  [scale=0.6,auto=left,every node/.style={circle, draw, fill=black!50,inner sep=0pt, minimum width=4pt}]
  \node (n1) at (210:1 cm) [label=left:$1$]{};
  \node (n2) at (330:1 cm) [label=right:$2$]{};
  \node (n3) at (90:1 cm) {};

  \foreach \from/\to in {n1/n3}
    \draw (\from) -- (\to);

\end{tikzpicture}
\caption*{$F_1$}
}
\parbox{1in}{
\centering
\begin{tikzpicture}
  [scale=0.6,auto=left,every node/.style={circle, draw, fill=black!50,inner sep=0pt, minimum width=4pt}]
  \node (n1) at (210:1 cm) [label=left:$1$]{};
  \node (n2) at (330:1 cm) [label=right:$2$]{};
  \node (n3) at (90:1 cm) {};

  \foreach \from/\to in {n2/n3}
    \draw (\from) -- (\to);

\end{tikzpicture}
\caption*{$F_2$}
}
\parbox{1in}{
\centering
\begin{tikzpicture}
  [scale=0.6,auto=left,every node/.style={circle, draw, fill=black!50,inner sep=0pt, minimum width=4pt}]
  \node (n1) at (210:1 cm) [label=left:$1$]{};
  \node (n2) at (330:1 cm) [label=right:$2$]{};
  \node (n3) at (90:1 cm) {};

  \foreach \from/\to in {n1/n3,n2/n3}
    \draw (\from) -- (\to);

\end{tikzpicture}
\caption*{$F_3$} } \caption{Decomposition into positive and negative
parts.}

\label{posneg_flags}
\end{figure}

This decomposition is useful because whenever we have $f \in
\mathcal{A}_+^{\sigma}$ and $g \in \mathcal{A}_-^{\sigma}$, we have
$[[ f \cdot g ]]_{\sigma} = 0$.  Hence given the semi-definite
matrix $Q$ for the type $\sigma$, we can split it into its
`invariant' part $Q^+$ and `anti-invariant' part $Q^-$.  While this
increases the number of block variables, they are now of smaller
size, and hence have fewer degrees of freedom, reducing the
dimension of the search space. Moreover, it may be that not all of
these parts are necessary, so we can proceed as before to remove any
unnecessary block variables.

\medskip

The second technique we use is that of identifying natural
eigenvectors. For this, we require an extremal construction that
attains the conjectured bound; let $G_n$ represent an extremal graph
on $n$ vertices, and let $\{ G_n \}_{n \in \mathbb{N}}$.  Given a type $\sigma$, fix a position of $\sigma$ in $G_n$.  This turns $G_n$ into a $\sigma$-flag $F_n$.  The family $\{ F_n \}_{n \in \mathbb{N}}$ represents a way to consistently label the type $\sigma$ in $G_n$.

Recall that in the flag algebra calculations, we used the bound $[[Q]]_{\sigma}(G_n) \ge o_{n \rightarrow \infty}(1)$.  If $G_n$ is an extremal graph, then the bounds are tight, and so $[[Q]]_{\sigma}(G_n) = o_{n \rightarrow \infty}(1)$.  Hence we must have $p_{\sigma}(Q \{ \mathcal{F}_l^{\sigma} \}; F_n) = \sum_{F_1, F_2 \in \mathcal{F}_l^{\sigma}} Q_{F_1,F_2} p_{\sigma}(F_1; F_n) p_{\sigma}(F_2; F_n) + O( 1 / n ) = o_{n \rightarrow \infty}(1)$.  Taking the limit as $n \rightarrow \infty$, this implies that if we have a vector $v$ defined by $v_F = \lim_{n \rightarrow \infty} p_{\sigma}(F ; F_n)$ for $F \in \mathcal{F}_l^{\sigma}$, then $v_F$ must be a zero-eigenvector of $Q$.  Repeating this for different embeddings of the type $\sigma$ in the extremal family of graphs $\{ G_n \}$ can give rise to several eigenvectors.  This procedure is formally defined using the apparatus of \emph{ensembles of random homomorphisms} in Section 3.2 of \cite{raz_flag}.

Having fixed this eigenvectors, we can then reduce the size of the block variables.  Note that if we are able to remove all zero-eigenvectors this way, then we are left with positive definite matrices as our block variables.  This leaves a little room for error, so we can replace the entries with simple rational entries and hope to still have a positive semi-definite matrix.

\medskip

Our final method for reducing the dimension of the search space is to change the basis to introduce zero entries.  Ideally the new set of variables will be a rational linear combination of the previous set, which will lead to a solution with rational entries.  Moreover, we introduce zeros in such a way as to split the block variables into smaller blocks.  More formally, consider the general SDP problem of the following form:

\medskip

maximize $\mathrm{tr}(CX)$, subject to
\begin{itemize}
    \item $\mathrm{tr}(A_iX) = a_i$ for $i = 1, 2, \hdots, m$
    \item $X \succeq 0$ (that is, $X$ is positive semi-definite)
\end{itemize}
where $X$ and $A_i$ are symmetric $n \times n$ matrices for $i = 1, 2, \hdots, m$.

\medskip

Suppose we had a rational $n \times n$ matrix $M$ such that all entries of the first row (and hence column, by symmetry) of $M X M^T$, except possibly the first, were zero.  We can then change variables to modify the SDP problem into an equivalent one, as below:

\medskip

maximize $\mathrm{tr}(\tilde{C}Y)$, subject to
\begin{itemize}
    \item $\mathrm{tr}(\tilde{A_i}Y) = a_i$ for $i = 1, 2, \hdots, m$
    \item $Y \succeq 0$
\end{itemize}
where $\tilde{C} = (M^{-1})^T C M^{-1}$ and $\tilde{A_i} = (M^{-1})^T A_i M^{-1}$ for $i = 1, 2, \hdots, m$.

\medskip

The solutions of both problems are related by the equation $Y = M X M^T$.  We can now reduce the dimension of the solution space by forcing all the non-principle entries of the first row/column of $\tilde{C}$ and $\tilde{A_i}$ to be zero for $i = 1, 2, \hdots, m$.  This is possible because we already have the existence of a solution $Y$ with $Y_{1,j} = Y_{j,1} = 0$ for $j = 2, 3, \hdots, n$, and hence this restricted solution space contains a solution to the original problem.  This operation splits the block variable $Y$ into a one-dimensional block and an $(n-1)$-dimensional block.  We can now iterate the procedure.

We find such a matrix $M$ by inspecting the numerical solution to the original SDP problem, and using a rational approximation to an eigenvector $v$ for the first row.  We then fill in the remaining rows with independent vectors orthogonal to $v$.  Note that if the solution is initially positive definite, there is a little room for error, so we may hope to choose a simple rational approximation without worsening the solution to the SDP problem.

\medskip

\noindent {\sc Expressing the solution as a sum of squares:}

If we are able to repeatedly iterate the change of basis procedure
outlined above, then we will eventually reach a problem whose
solution is a diagonal matrix.  This is advantageous for two
reasons.  First, the semi-definite programming problem reduces to a
linear programming (LP) problem.  This can be solved by only taking
rational linear combinations of the entries of the variables at
every step, and so the solution will be a rational combinations of
the input to the LP problem.  Hence the solution can be specified
exactly, resulting in a verifiable proof.  Second, we can write the
positive semi-definite matrix as a sum of squares, which is easier
to understand.  This can lead to combinatorial interpretations of
the proof, as we demonstrated in Section \ref{43_flags}.  Thus while
this step is not necessary for solving problems with the machinery
of flag algebras, it makes the resulting proofs much more
understandable.

\section{Integer optimization problem} \label{app_intopt}

In this appendix, we prove Lemma \ref{c5opt} from Section \ref{c5stab}, in which we solve the integer optimization problem required to determine the size of the parts in the blow-up of $C_5$ that minimizes the number of $4$-cliques.

\begin{lemma}
Let $\varepsilon > 0$ be sufficiently small, and $n$ sufficiently large.  Consider the function 
\[ g(y_1, y_2, y_3, y_4, y_5) = \sum_{i=1}^5 \binom{y_i}{5} - \sum_{i=1}^5 \binom{n - y_i - y_{i+1}}{4}. \]
Subject to the constraints that the $y_i$ be integers satisfying $\sum_{i=1}^5 y_i = 2n$ and $\left| y_i - \frac{2}{5} n \right| < \varepsilon n$, $g$ is uniquely (up to cyclic permutation of the variables) minimized when the $y_i$ take values $\left \lfloor \frac{2n}{5} \right \rfloor$ and $\left \lceil \frac{2n}{5} \right \rceil$ in ascending order.
\end{lemma}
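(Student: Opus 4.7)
The plan is a two-step reduction: first I smooth any minimizer into a configuration whose entries all lie in $\{\lfloor 2n/5 \rfloor, \lceil 2n/5 \rceil\}$, and then I determine the cyclic arrangement of those entries.

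For the smoothing step, suppose $(y_1,\ldots,y_5)$ is a minimizer containing two entries with $y_i - y_j \ge 2$. I would show that the exchange $(y_i, y_j) \mapsto (y_i-1, y_j+1)$ preserves $\sum y_k = 2n$, remains in the open box $\{|y_k - 2n/5| < \varepsilon n\}$ (since both entries are well away from the boundary when $\varepsilon$ is small and $n$ is large), and strictly decreases $g$. The change in $\sum \binom{y_k}{5}$ equals $\binom{y_j}{4} - \binom{y_i-1}{4} \le -\binom{y_j}{3}$, of leading order $-\tfrac{1}{6}(2n/5)^3$. Only a bounded number of terms in the second sum are affected, each changing by $\pm \binom{m}{3}$ with $m = n - y_k - y_{k+1} \sim n/5$; these contributions come in $\pm$ pairs whose arguments differ by $O(\varepsilon n)$, so the total change is $O(\varepsilon n^3)$. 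Since $(2/5)^3 = 8 (1/5)^3$, for $\varepsilon$ sufficiently small the first-sum decrease dominates and $\Delta g < 0$, a contradiction. Hence every $y_i$ lies in $\{a, a+1\}$ with $a = \lfloor 2n/5 \rfloor$, and $\sum y_i = 2n$ fixes the multiset.

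For the arrangement step, let $r = 2n - 5a \in \{0,1,2,3,4\}$ be the number of $(a+1)$-entries. When $r \in \{0,1,4,5\}$ the configuration is unique up to cyclic rotation. For $r \in \{2, 3\}$, which are related by the swap $a \leftrightarrow a+1$, it suffices to treat $r = 2$. Up to rotation only two configurations exist: Configuration A (the two $(a+1)$'s at cyclically adjacent positions, matching the ``ascending'' vector $(a,a,a,a+1,a+1)$) and Configuration B (at cyclic distance two). The first sum $\sum \binom{y_i}{5}$ depends only on the multiset and agrees, so I compare second sums. Writing out the five consecutive pair sums $y_i + y_{i+1}$ in each configuration, a direct computation gives
\[ S_B - S_A = 2\binom{m}{4} - \binom{m+1}{4} - \binom{m-1}{4} = -\binom{m-1}{2}, \]
where $S_A, S_B$ are the second sums in the two configurations and $m = n - 2a - 1 \sim n/5$. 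This is strictly negative for large $n$ by discrete convexity of $\binom{\cdot}{4}$; since the second sum enters $g$ with a minus sign, we get $g_A < g_B$, so Configuration A is the unique minimizer up to rotation. The $r = 3$ case follows by the same computation under the swap symmetry. Combining with the smoothing step recovers precisely the ascending arrangement claimed in the lemma.

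The chief obstacle is in the smoothing step: the clean estimate $|\Delta_2| = O(\varepsilon n^3)$ must hold uniformly across all positional relationships of the swapped indices $(i,j)$. When $i$ and $j$ are cyclically adjacent, two of the four affected second-sum terms collide into a single term whose argument is unchanged, so the algebraic form of $\Delta_2$ changes. A short case analysis shows that in every subcase $\Delta_2$ is still a bounded sum of differences of $\binom{\cdot}{3}$-values at arguments lying within $O(\varepsilon n)$ of $n/5$, which keeps the uniform bound intact. Everything else is an elementary comparison exploiting the cyclic symmetry of $g$.
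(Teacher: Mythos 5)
Your proof establishes the lemma as literally printed, and its second half (comparing the two cyclic arrangements for $r\in\{2,3\}$ via $2\binom{m}{4}-\binom{m+1}{4}-\binom{m-1}{4}=-\binom{m-1}{2}$) is exactly the ``easy calculation'' that the paper leaves to the reader, so that part is fine and matches the paper. The trouble is in the smoothing step, and it stems from the misprint in the statement: the function actually being minimized is $\sum_i\binom{y_i}{4}-\sum_i\binom{n-y_i-y_{i+1}}{4}$ (this is forced by the identity $t_4(G)=\sum_i\binom{|V_i\cup V_{i+1}|}{4}-\sum_i\binom{|V_i|}{4}$ in Section 4.2, and it is what the paper's own computation of $\Delta g$ uses: after differencing, all its binomials have lower index $3$, as they would coming from $\binom{\cdot}{4}$, not $\binom{\cdot}{5}$). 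You took the printed $\binom{y_i}{5}$ at face value, and under that reading a unit transfer to any entry smaller by at least $2$ gains $\Theta(n^3)$ from the first sum, which indeed swamps your $O(\varepsilon n^3)$ bound on the second sum. With the correct exponent, the gain from the first sum is only $\binom{y_j}{3}-\binom{y_i-1}{3}=-\Theta(sn^2)$, where $s$ is essentially the gap, while the change in the second sum consists of $\pm$ differences of $\binom{\cdot}{3}$ at arguments near $n/5$ that can differ by $\Theta(\varepsilon n)$, i.e.\ it can be as large as $\Theta(\varepsilon n^3)$. So ``$O(\varepsilon n^3)$, hence negligible'' is false for the lemma that the application needs: when $s$ is bounded the error term can exceed the main term by an unbounded factor, and an arbitrary unit transfer need not decrease $g$.

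This is precisely where the paper's proof does its real work: it chooses a pair with $y_i-y_j\ge 2$ whose difference is \emph{maximal}, splits into the cases of cyclically consecutive and non-consecutive indices, computes $\Delta g=\frac{1}{50}\left[(1+O(\varepsilon))t-(4+O(\varepsilon))s\right]n^2+O((s^2+t^2)n)$ (with $3$ in place of $4$ in the non-consecutive case), where $t$ records the imbalance of the neighbouring entries, and then uses maximality of the gap to force $t\le 2s+1$, making the bracket negative. None of that structure (maximal-gap choice, the parameter $t$, the consecutive/non-consecutive case split) is present in your argument, and it cannot be recovered from your estimates alone. Note that your arrangement step survives the correction, since the first sum depends only on the multiset of the $y_i$; but the smoothing step, as written, proves only the misprinted statement and has a genuine gap for the intended one.
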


\begin{proof}
First we will show that if $(y_1, y_2, y_3, y_4, y_5)$ is optimal, the $y_i$ should be as equal as possible.  Suppose towards contradiction that this was not the case.  Then there are $i,j$ with $y_i - y_j \ge 2$; let $i,j$ be such that this difference is maximal over all such pairs.  There are two cases:

\medskip

\underline{Case 1:} $i$ and $j$ are consecutive.

Without loss of generality, suppose $i = 2$ and $j = 3$, so we have $y_2 - y_3 \ge 2$, with this difference being maximal.  We will show that $g(y_1, y_2 - 1, y_3 + 1, y_4, y_5) < g(y_1, y_2, y_3, y_4, y_5)$, which contradicts our assumption of optimality.  Indeed, we have
\begin{align*}
    \Delta g &= g(y_1, y_2 - 1, y_3 + 1, y_4, y_5) - g(y_1, y_2, y_3, y_4, y_5) \\
    &= \binom{y_2-1}{4} + \binom{y_3 + 1}{4} - \binom{n - y_1 - y_2 + 1}{4} + \binom{n - y_3 - y_4 - 1}{4} \\
    & \quad - \left[ \binom{y_2}{4} + \binom{y_3}{4} - \binom{n - y_1 - y_2}{4} - \binom{n - y_3 - y_4}{4} \right] \\
    &= \binom{y_3}{3} - \binom{y_2 - 1}{3} + \binom{n - y_3 - y_4 - 1}{3} - \binom{n - y_1 - y_2}{3}.
\end{align*}

Now let $s = y_2 - y_3 - 1 \ge 1$, and let $t = (n - y_3 - y_4 - 1) - (n - y_1 - y_2) = y_1 - y_4 + y_2 - y_3 - 1 = y_1 - y_4 + s$.  If $t \le 0$, then clearly the above expression is negative, which shows $(y_1, y_2, y_3, y_4, y_5)$ is not optimal.  Hence we must have $t \ge 1$.  In this case, we can rewrite the above as
\[ \Delta g = \left[ \binom{t}{3} + \binom{t}{2} ( n - y_1 - y_2 ) + t \binom{n - y_1 - y_2}{2} \right] - \left[ \binom{s}{3} + \binom{s}{2} y_3 + s \binom{y_3}{2} \right]. \]

From our constraints on the variables $y_i$, we have that $y_3 = \left( \frac{2}{5} + O(\varepsilon) \right)n$, $n - y_1 - y_2 = \left( \frac{1}{5} + O( \varepsilon ) \right)n$, $s \le 2\varepsilon n$ and $t \le 4 \varepsilon n$.  These bounds imply that the main terms are those linear in $s$ and $t$.  We have
\[ \Delta g = \frac{1}{50} \left[ \left( 1 + O( \varepsilon ) \right) t - \left( 4 + O( \varepsilon ) \right) s \right] n^2 + O ( (s^2 + t^2) n ). \]
In particular, for large $n$, this can only be non-negative if $t \ge \left( 4 - O( \varepsilon) \right) s$.  However, we have $t = y_1 - y_4 + s$, and by our assumption of maximality of $y_2 - y_3$, we have $y_1 - y_4 \le y_2 - y_3 = s + 1$.  Hence $t \le 2s + 1$, and we have a contradiction.

\medskip

\underline{Case 2:} $i$ and $j$ are not consecutive.

Without loss of generality, suppose $i = 2$ and $j = 4$, with $y_2 - y_4 \ge 2$ being the maximal difference.  Let
\[ \Delta g = g(y_1, y_2 - 1, y_3, y_4 + 1, y_5) - g(y_1, y_2, y_3, y_4, y_5). \]
By similar calculations to those in Case 1, we have
\[ \Delta g = \binom{y_4}{3} - \binom{y_2 - 1}{3} + \binom{n - y_5 - y_4 - 1}{3} + \binom{n - y_4 - y_3 - 1}{3} - \binom{n - y_2 - y_3}{3} - \binom{n - y_1 - y_2}{3}. \]
We define $s = y_2 - y_4 - 1$, and $t = (n - y_5 - y_4 - 1) - (n - y_1 - y_2) = y_1 - y_5 + s$.  If $t \le 0$, then $\Delta g < 0$, which contradicts the optimality of $(y_1, y_2, y_3, y_4, y_5)$.  Hence we may assume $t \ge 1$, and rewrite $\Delta g$ in terms of $s$ and $t$ as before.  In this case we find
\[ \Delta g = \frac{1}{50} \left[ \left( 1 + O( \varepsilon ) \right) t - \left( 3 + O( \varepsilon ) \right) s \right] n^2 + O ( (s^2 + t^2) n ). \]
Hence for $\Delta g \ge 0$, we must have $t \ge \left(3 - O( \varepsilon) \right) s$.  However, by maximality of $y_2 - y_4$, we have $t = y_1 - y_5 + s \le 2s + 1$.  The only way these equations can be satisfied is if $s =1$ and $y_1 - y_5 = 2$.  But in this case $y_1$ and $y_5$ are two consecutive variables with a maximal difference, and so we reduce to Case 1, which leads to a contradiction.

\medskip

Hence we have shown that subject to the above conditions, $g$ is only minimised when the variables $y_i$ take values $\left \lfloor \frac{2n}{5} \right \rfloor$ or $\left \lceil \frac{2n}{5} \right \rceil$.  If $n \equiv 0, 1, 4 \pmod 5$, there is only one way (up to cyclic rotation) that these values can be distributed, so the minimum is uniquely determined.  If $n \equiv 2, 3 \pmod 5$, then there are two possible distributions of the values.  In each case, an easy calculation shows $g$ is minimised when the values are in decreasing order.  This completes the proof of the lemma.
\end{proof}

\medskip

Note that we assume $|y_i - \frac{2}{5} n | < \varepsilon n$ only to simplify the proof.  Even without this condition, we can prove that for any $n \ge 12$, the above result holds.  However, as the flag algebra results are asymptotic in nature, we can only determine the unique extremal graph for the $(4,3)$-problem when $n$ is large.


\begin{thebibliography}{99}
\bibitem{alonspencer}
N. ~Alon, J. ~Spencer,
\newblock{\em The Probabilistic Method},
\newblock{John Wiley Inc.}, New York (2008).

\bibitem{andrasfai}
B.~Andr\'{a}sfai, P.~Erd\H{o}s and V.~S\'{o}s,
\newblock{On the connection between chromatic number, maximal clique and minimal degree of a graph},
\newblock{\em Discrete Math.} {\bf 8} (1974), 205--218.

\bibitem{baber}
R.~Baber and J.~Talbot,
\newblock{Hypergraphs do jump},
\newblock{\em Combin. Probab. Comput.} {\bf 20} 2 (2011), 161 -- 171.

\bibitem{erdos}
P.~Erd\H{o}s,
\newblock{On the number of complete subgraphs contained in certain graphs},
\newblock{\em Publ. Math. Inst. Hungar. Acad. Sci.} {\bf 7} (1962), 459--464.

\bibitem{hatami}
H.~Hatami, J.~Hladk\'{y}, D.~Kr\'{a}l', S.~Norine and A.~Razborov,
\newblock{Non-three-colorable common graphs exist},
\newblock{\em arXiv}: 1105.0307 (2011).

\bibitem{hatami2}
H.~Hatami, J.~Hladk\'{y}, D.~Kr\'{a}l', S.~Norine and A.~Razborov,
\newblock{On the number of pentagons in triangle-free graphs},
\newblock{\em arXiv}: 1102.1634 (2011).

\bibitem{keevash}
P.~Keevash,
\newblock{Hypergraph Tur\'an Problems},
\newblock{\em Surveys in combinatorics}, Cambridge (2011).

\bibitem{lorden}
G.~Lorden,
\newblock{Blue-empty chromatic graphs},
\newblock{\em Amer. Math. Monthly.} {\bf 69} (1962), 114--120.

\bibitem{nikiforov}
V.~Nikiforov,
\newblock{On the minimum number of $k$-cliques in graphs with restricted independence number},
\newblock{\em Combin. Probab. Comput. } {\bf 10} (2001), 361--366.

\bibitem{nikiforov_pre}
V.~Nikiforov,
\newblock{The minimum number of $4$-cliques in a graph with triangle-free complement},
\newblock{\em preprint} (2005).

\bibitem{ravry}
V.~Falgas-Ravry and E.~R.~Vaughan,
\newblock{On applications of Razborov's flag algebra calculus to extremal 3-graph theory},
\newblock{\em preprint}, 2011.

\bibitem{raz_flag}
A.~Razborov,
\newblock{Flag algebras},
\newblock{\em J. Symbolic Logic} {\bf 72} 4 (2007), 1239--1282.

\bibitem{raz_hyper}
A.~Razborov,
\newblock{On 3-hypergraphs with forbidden 4-vertex configurations},
\newblock{\em SIAM J. Discr. Math.} {\bf 24} (2010), 946 -- 963.

\bibitem{raz_tri}
A.~Razborov,
\newblock{On the minimum density of triangles in graphs},
\newblock{\em Combin. Probab. Comput.} {\bf 17} 4 (2008), 603 -- 618.

\bibitem{turan}
P.~Tur\'an,
\newblock{On an extremal problem in graph theory},
\newblock{\em Matematikai \'es Fizikai Lapok} {\bf 48} (1941), 436--452.

\bibitem{zykov}
A.~Zykov,
\newblock{On some properties of linear complexes},
\newblock{\em Mat. Sbornik N.S.} {\bf 24} 66 (1949), 163 -- 188.

\end{thebibliography}
\end{document}